\newcommand{\bG}{{\mathbf{G}}}      
\newcommand{\cO}{{\mathcal{O}}}
\newcommand{\bB}{{\mathbf{B}}}
\newcommand{\bC}{{\mathbf{C}}}
\newcommand{\bH}{{\mathbf{H}}}
\newcommand{\bT}{{\mathbf{T}}}
\newcommand{\bV}{{\mathbf{V}}}
\newcommand{\bb}{{\mathbf{b}}}
\newcommand{\bc}{{\mathbf{c}}}
\newcommand{\bd}{{\mathbf{d}}}
\def\GL{ \text{\rm GL} }
\def\GU{ \text{\rm GU} }
\def\SL{ \text{\rm SL} }
\def\PSL{ \text{\rm PSL} }
\def\PGL{ \text{\rm PGL} }
\def\SU{ \text{\rm SU} }
\def\PSU{ \text{\rm PSU} }
\DeclareMathOperator{\Res}{Res}               
\DeclareMathOperator{\Irr}{Irr}
\newtheorem{thm}{Theorem}[section]
\newtheorem{lem}[thm]{Lemma}
\newtheorem{cor}[thm]{Corollary}
\newtheorem{prop}[thm]{Proposition}
\theoremstyle{theorem}
\theoremstyle{definition}
\newtheorem{exmp}[thm]{Example}
\theoremstyle{remark}
\newtheorem{rem}[thm]{Remark}
\newtheorem{notn}[thm]{Notation}
\begin{document}


\title[On the source algebra equivalence class of blocks with cyclic defect groups, III]
{On the source algebra equivalence class of blocks with cyclic defect groups, III}

\date{\today}

\author{
Gerhard Hiss and  Caroline Lassueur
}
\address{{\sc Gerhard Hiss},  Lehrstuhl f\"ur Algebra und Zahlentheorie, 
RWTH Aachen, 52056 Aachen, Germany.}
\email{gerhard.hiss@math.rwth-aachen.de}
\address{{\sc Caroline Lassueur}, Caroline Lassueur, 
RPTU Kaiserslautern-Land\-au, Fachbereich Mathematik,
67653 Kaiserslautern, Germany and Leibniz Universit\"at Hannover,
Institut f\"ur Algebra, Zahlentheorie und Diskrete Mathematik,
Welfengarten 1, 30167 Hannover, Germany.}
\email{lassueur@mathematik.uni-kl.de}

\keywords{Blocks with cyclic defect groups, source algebra equivalences, 
endo-permutation modules, special linear groups, special unitary groups}

\subjclass[2010]{Primary 20C20, 20C15, 20C33.}
\begin{abstract} 
This series of papers is a contribution to the program of classifying
$p$-blocks of finite groups up to source algebra equivalence, starting
with the case of cyclic blocks.
To any $p$-block $\bB$ of a finite group with cyclic defect group $D$,
Linckelmann associated an invariant $W( \bB )$, which is an indecomposable
endo-permut\-ati\-on module over~$D$, and which, together with the Brauer tree
of~$\bB$, essentially determines its source algebra equivalence class.

In Part II of our series, assuming that $p$ is an odd prime, we reduced the 
classification of the invariants $W( \bB )$ arising from cyclic $p$-blocks $\bB$ of 
quasisimple classical groups to the classification for cyclic $p$-blocks 
of quasisimple quotients of special linear or unitary groups. This objective is 
achieved in the present Part III.
\end{abstract}


\maketitle


\pagestyle{myheadings}
\markboth{On the source algebra equivalence class of blocks with cyclic defect groups, III}
{On the source algebra equivalence class of blocks with cyclic defect groups, III}

\vspace{6mm}

\section{Introduction}\label{sec:Intro}
Let~$p$ be an odd prime.  
The purpose of this article is to determine the 
invariants $W( \bar{\bB} )$ for all cyclic $p$-blocks~$\bar{\bB}$  of quasisimple 
groups with simple quotients~$\PSL_n^{\varepsilon}(q)$. As usual, 
$\varepsilon \in \{ -1, 1 \}$ and $\PSL_n^{\varepsilon}(q) = \PSL_n( q )$ 
if $\varepsilon = 1$, and $\PSL_n^{\varepsilon}(q) = \PSU_n( q )$
if $\varepsilon = -1$. The analogous convention is used for the groups
$\SL_n^{\varepsilon}(q)$ and $\GL_n^{\varepsilon}(q)$.
We collect the notation used in this manuscript, and in Parts~{I}
and~{II} of this series of articles, in form of a glossary in 
Section~\ref{sec:Glossary}.
%

Here is the first main result of our analysis.

\begin{thm}
\label{MainTheoremSLNSUNI}
Let $n \geq 2$ be an integer, $q$ a prime power and
$G := \SL_n^\varepsilon( q )$. Let~$p$ be an odd prime with
$p \mid q - \varepsilon$.

Let~$\bar{G}$ denote a central quotient of~$G$ and let $\bar{\bB}$ be a
$p$-block of~$\bar{G}$ with a non-trivial cyclic defect group~$\bar{D}$ of 
order~$p^{l}$.

{\rm (a)} If $G = \SL_n( q )$, then $W( \bar{\bB} ) \cong k$. 

{\rm (b)} If $G = \SU_n(q)$ and 
$p \equiv -1\,\,(\mbox{\rm mod}\,\,4)$, then
$W( \bar{\bB} ) \cong W_{\bar{D}}( A )$ for a subset 
$A \subseteq \{ 1, \ldots , l - 1 \}$ satisfying one of the following conditions.
\begin{itemize}
\item[(i)] $A$ is an interval (possibly the empty set);  
\item[(ii)] $A = [a, l-1] \setminus \{ l - a \}$ for some $1 \leq a \leq l/2$;
\item[(iii)] $A = \{ l - a \} \cup [a, l - 1]$ for an integer $l/2 < a \leq l - 1$.
\end{itemize}

{\rm (c)} If $G = \SU_n(q)$ and $p \equiv 1\,\,(\mbox{\rm mod}\,\,4)$, 
$W( \bar{\bB} ) \cong W_{\bar{D}}( A )$ for a subset
$A \subseteq \{ 1, \ldots , l - 1 \}$ with $|A| \leq 1$. \hfill{\textsquare}
\end{thm}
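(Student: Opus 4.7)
The plan is to separate the analysis according to the three parts of the statement, after first reducing the problem about a central quotient $\bar G$ to a corresponding problem about the quasisimple group $G$ itself. Since the center $Z(G)$ of $\SL_n^{\varepsilon}(q)$ is a cyclic group of order $\gcd(n,q-\varepsilon)$ and our hypothesis $p\mid q-\varepsilon$ places a $p$-part in $Z(G)$ precisely when $p\mid n$, I would first invoke the lifting/descent results from Parts~I and~II for central quotients by $p'$-groups, then handle the genuinely $p$-singular central quotient by pulling $\bar{\bB}$ back to a block $\bB$ of $G$ sharing a cyclic defect group and using the compatibility of $W$ with such quotients. This reduces all three parts to the computation of $W(\bB)$ for a cyclic $p$-block $\bB$ of $G=\SL_n^{\varepsilon}(q)$ itself.

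For part~(a), the standing hypothesis $p\mid q-1$ places the cyclic defect group $D$ inside a split maximal torus of $\GL_n(q)$. The Sylow theory combined with the Fong--Srinivasan description of $p$-blocks of $\GL_n(q)$ for linear primes shows that $\bB$ is obtained from a block of a Levi subgroup of type $\GL_1(q)\times\cdots\times\GL_m(q)$ by Harish--Chandra induction, and by results collected in Part~II this inflation is compatible with Linckelmann's invariant. The defect group $D$ therefore lies in a ``split'' cyclic torus piece, on which the inertial action of $N_G(D,\bB)/C_G(D)$ acts through a $p'$-group of scalar multiplications. The associated indecomposable endo-permutation $kD$-module is the Dade group element arising from a trivial source situation, which is just the trivial module~$k$. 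This yields $W(\bar{\bB})\cong k$.

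For parts~(b) and~(c), with $G=\SU_n(q)$ and $p\mid q+1$, I would follow the same strategy but on the unitary side: identify the Brauer tree of $\bB$ via Fong--Srinivasan, determine the stabiliser of each edge in $N_G(D)/C_G(D)$, and then read off, for each proper subgroup $D_i$ of $D$ of order $p^{l-i}$, whether the corresponding character on the Brauer tree is trivial or not. The subset $A\subseteq\{1,\dots,l-1\}$ records precisely those indices $i$ for which this character is non-trivial, and by the construction of $W_{\bar D}(A)$ recalled in Part~II, this determines $W(\bar{\bB})$. The central mechanism producing the subsets $A$ in cases (i)--(iii) is that the inertial quotient of $\bB$ is cyclic and acts on $D$ through characters whose orders are controlled by the $2$-adic valuations of the orders of cyclotomic factors $q^{d}-(-1)^{d}$; the stabilisers form a nested or near-nested chain, giving either an interval, an interval minus one point, or an interval union a singleton.

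The main obstacle is the dichotomy between (b) and (c). When $p\equiv 1\pmod 4$, the $2$-part of $p-1$ dominates over that of $p+1$, and the sign twist introduced by the unitary Frobenius has trivial effect on the relevant characters of $D/D_i$: at most one exceptional index can appear, giving $|A|\le 1$. When $p\equiv -1\pmod 4$, the $2$-part of $p+1$ enters the analysis, and the interplay between the cyclotomic factor responsible for $D$ and the sign twist produces the more elaborate patterns (i)--(iii). Verifying that exactly these shapes occur, and that no others do, requires a detailed bookkeeping of the $p$-adic and $2$-adic valuations of the tori orders appearing in $\SU_n(q)$, together with a careful matching between the combinatorics of the Brauer tree edges and the quotients $D/D_i$; this $2$-adic analysis is where I expect the bulk of the technical effort, and the main obstacle, to concentrate.
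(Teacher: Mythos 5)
Your reduction step contains a critical gap.  You assert that the block $\bB$ of $G$ dominating $\bar{\bB}$ ``shares a cyclic defect group'' with $\bar{\bB}$.  This is false in general.  When $Y$ is a non-trivial central $p$-subgroup, the defect group $D$ of $\bB$ satisfies $D = \langle t, O_p(Z) \rangle$, which is abelian with at most two generators but need not be cyclic (see Lemma~\ref{YEqualOpZ}(c) of the paper).  In fact the paper's entire analysis in Section~\ref{sec:Analysis} is structured around this phenomenon: $D$ can be a genuine direct product $C_{p^\alpha}\times C_{p^\beta}$, and the invariant $W(\bar{\bB})$ cannot be read off from the theory of cyclic blocks of $G$ itself.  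This also means your plan to use ``the Brauer tree of $\bB$'' does not get off the ground, because $\bB$ need not be a cyclic block and need not have a Brauer tree.

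Beyond that, the structural decomposition you propose is not the one the paper uses and I do not see how it yields the cases (i)--(iii) of part~(b).  The paper embeds $G$ into $\tilde{G}=\GL_n^\varepsilon(q)$, analyzes a block $\tilde{\bc}$ of $C_{\tilde{G}}(D)$ with defect group $\tilde{D}$ of rank at most~$3$, and derives the key bound $h\leq 3$ on the number of irreducible factors of the minimal polynomial of $t$ (Lemma~\ref{MinimalPolynomial}).  The cases $h=1,2,3$ give precisely the three shapes of $A$.  The signs $\pm 1$ feeding into $W_{\bar D}(A)$ are computed via the Deligne--Lusztig sign function $\omega_{\tilde{\bH}}$ applied to the sequence $t, t^p, \dots$ (Lemma~\ref{Approach}), and the dependence on $p\bmod 4$ enters through [HL25, Lemma~4.1.3] as a quadratic-residue phenomenon in the Lusztig induction signs --- not through ``$2$-adic valuations of the orders of cyclotomic factors $q^d-(-1)^d$'' as you suggest.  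Your heuristic about nested stabilizer chains on the Brauer tree therefore does not correspond to the actual mechanism, and you have not indicated where the union-of-two-intervals shape in (ii)--(iii) could arise from it; in the paper that shape comes specifically from $h=3$ with $a_1>0$, where the fixed-space filtration jumps at two unrelated levels, and this requires handling the non-cyclic $D$ in full.
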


A proof of Theorem~\ref{MainTheoremSLNSUNI} is provided by
Propositions~\ref{Case1Prop},~\ref{MainCorCase2} and \ref{MainCorCase3}, where 
more precise information is given.

In the notation of the theorem, the cases $p \nmid q - \varepsilon$ have already 
been treated in \cite[Proposition~$6.3$]{HL24} if $p \mid q$, and in 
\cite[Remark~$4.2.3$]{HL25} if $p \nmid q$.

In order to prove this theorem, we assume that $\bar{G} = G/Y$ with a 
non-trivial subgroup $Y \leq Z( G )$, and we let~$\bar{\bB}$ be a cyclic 
$p$-block of~$\bar{G}$. To determine~$W( \bar{\bB} )$, we may assume
that~$Y$ is a $p$-group. Indeed, $\hat{G} := G/O_p(Y)$ is a central extension 
of~$\bar{G}$ by a $p'$-group, and if~$\hat{\bB}$ denotes the $p$-block 
of~$\hat{G}$ dominating~$\bar{\bB}$, then $W( \bar{\bB} ) = W( \hat{\bB} )$ 
by \cite[Lemma~$4.1$]{HL24}.

Let~$\bB$ denote the block of~$G$ dominating~$\bar{\bB}$. 
Then a defect group~$D$ of~$\bB$ is abelian with at most~$2$ generators, but
not necessarily cyclic. Let~$\bc$ denote a Brauer correspondent of~$\bB$ 
in~$C_G(D)$. Now embed~$G$ into $\tilde{G} = \GL^{\varepsilon}_n( q )$, and 
let~$\tilde{\bc}$ denote a block of~$C_{\tilde{G}}( D )$ covering~$\bc$.
Then a defect group~$\tilde{D}$ of~$\tilde{\bc}$ 
is abelian with at most~$3$ generators. This situation is analyzed in detail in 
Section~\ref{TheSpecialLinearAndUnitaryGroups}, which leads to a complete
enumeration of the possibilities for~$W( \bar{\bB} )$ arising from the
blocks~$\bar{\bB}$.

In Section~\ref{Existence} we show that all such possibilities listed in
Theorem~\ref{MainTheoremSLNSUNI} arise for suitable choices of~$q$ and~$n$, 
leading to the following result.

\begin{thm}
\label{MainTheoremSLNSUNII} 
Let $p$ be an odd prime, $l \geq 1$ an integer and let~$\bar{D}$ be a cyclic 
$p$-group of order~$p^{l}$. 

Let $A$ be a subset of $\{ 1, \ldots , l - 1 \}$.  
If $p \equiv -1\,\,(\mbox{\rm mod}\,\,4)$, assume that $A$ satisfies one of the 
conditions {\rm (i)--(iii)} of {\rm Theorem~\ref{MainTheoremSLNSUNI}(b)}. If 
$p \equiv 1\,\,(\mbox{\rm mod}\,\,4)$, assume that $|A| = 1$.

Then there is an integer $n \geq 2$, a prime power~$q$ such that
$p \mid q + 1$, a central quotient $\bar{G}$ of $\SU_n( q )$ and a
$p$-block~$\bar{\bB}$ of $\bar{G}$ with cyclic defect group isomorphic
to~$\bar{D}$, such that $W( \bar{\bB} ) \cong W_{\bar{D}}( A )$. \hfill{\textsquare}
\end{thm}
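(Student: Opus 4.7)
The plan is to realize each admissible subset $A$ by an explicit choice of parameters $(n, q, Y)$ together with a block of $\bar{G} := \SU_n(q)/Y$. First, I would fix a prime power $q$ with $p \mid q + 1$ and with $v_p(q+1)$ large compared with $l$, so that $Z(\SU_n(q))$ contains every $p$-subgroup $Y$ needed below, and so that the Sylow $p$-subgroups of the relevant Levi subgroups of $\GU_n(q)$ have the required structure. Throughout, the target block $\bar{\bB}$ is to be dominated by a block $\bB$ of $G := \SU_n(q)$, and \cite[Lemma~$4.1$]{HL24} will reduce the computation of $W(\bar{\bB})$ to data attached to $\bB$, the central quotient $Y$, and the ambient group $\tilde{G} := \GU_n(q)$.

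The source of candidate blocks is Jordan decomposition: pick a semisimple $p'$-element $s$ in the dual of $\tilde{G}$ whose centralizer is a Levi of the form $\tilde{L} = \prod_j \GU_{m_j}(q^{p^{i_j}})$, and take $\tilde{\bB}$ to be the Jordan correspondent of a unipotent (Coxeter-type) block of $\tilde{L}$. The defect group $\tilde{D}$ is then controlled by a product of cyclic tori with orders involving the factors $q^{p^{i_j}} + 1$, so that the $p$-adic valuations of these orders determine the chain $\bar{D} \leq D \leq \tilde{D}$. The analysis already carried out in Section~\ref{TheSpecialLinearAndUnitaryGroups} attaches to each level $i \in \{1,\ldots,l-1\}$ a sign that decides whether $i$ lies in $A$, so the remaining task is to realize every admissible sign pattern by a judicious choice of $(\tilde{L}, Y)$.

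For case~(i) of Theorem~\ref{MainTheoremSLNSUNI}(b), a one-factor Levi $\GU_m(q^{p^{a-1}})$ together with a suitable central quotient $Y$ realizes any interval $A = [a,b]$, with the empty set coming from a block of defect zero on such a Levi. The situation $p \equiv 1 \pmod 4$ (with $|A| \leq 1$) is handled by the same construction after imposing the parity restriction on the Frobenius eigenvalue that is forced by Theorem~\ref{MainTheoremSLNSUNI}(c). Cases~(ii) and~(iii) are the technical heart of the argument: I would take a two-factor Levi of the form $\GU_{m_1}(q^{p^{a-1}}) \times \GU_{m_2}(q^{p^{l-a-1}})$, where the first factor contributes the interval $[a, l-1]$ and the second contributes the isolated level $\{l-a\}$, with the sign at $l-a$ either agreeing with or opposing the interval endpoints according to the parity of $m_2$ and the choice of $Y$. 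The hard part, and the main obstacle I anticipate, will be showing that for every admissible $a$ the pair $(m_1, m_2)$ and the quotient $Y$ can be arranged to flip the sign at level $l-a$ relative to $[a,l-1]$ without perturbing the other levels; this should follow from a careful tally with the explicit formulas of Section~\ref{TheSpecialLinearAndUnitaryGroups}. After running through the admissible subsets $A$, one verifies $W(\bar{\bB}) \cong W_{\bar{D}}(A)$ case by case, which completes the proof.
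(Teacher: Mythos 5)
Your overall strategy---realize each admissible $A$ by choosing a semisimple $p'$-element with controlled centralizer, then use the sign-sequence computations already done in Section~\ref{sec:Analysis}---is the same as the paper's. The one-factor construction for intervals ending at $l-1$ and the use of central quotients to shift the left endpoint also match Proposition~\ref{Existence1Prop} and Proposition~\ref{Existence2PropB}. (One small point: the paper does not simply take $v_p(q+1)$ ``large''; it constructs $q$ via Lemma~\ref{ExistenceOfQ} so that $v_p(q-\varepsilon)$ has a prescribed value~$a$, and then lets the central quotient absorb the remaining flexibility. In practice the two points of view can be reconciled, but the paper's statements are phrased around exact control of~$a$.)

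The genuine gap is in your treatment of cases~(ii) and~(iii). You propose to realize $A = [a,l-1]\setminus\{l-a\}$ and $A = \{l-a\}\cup[a,l-1]$ by a \emph{two}-factor Levi of the form $\GU_{m_1}(q^{p^{a-1}})\times\GU_{m_2}(q^{p^{l-a-1}})$. This corresponds, in the paper's notation, to the case $h=2$ (minimal polynomial of~$t$ has two irreducible factors). But Proposition~\ref{MainCorCase2} shows that whenever $h=2$ the invariant is $W_{\bar D}(I)$ with $I$ a single interval: the two local sign sequences are both of the form $\omega_\Lambda(\mathbf{1}_{[x_j,\,l-1]})$ (intervals anchored at $l-1$), and the symmetric difference of two such intervals is again an interval; the only other possibility (Proposition~\ref{MainCorCase2}(c)) is a singleton coming from the level at which $\tilde{H}$ collapses to $\tilde{G}$. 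Neither can produce a set with an isolated ``flipped'' entry strictly inside a longer interval. The non-interval sets of~(ii) and~(iii) require $h=3$: there $\tilde{H}=\tilde{G}_1\times\tilde{G}_{2,3}$, and the factor $\tilde{G}_{2,3}$ contributes the singleton $I_{2,3}=\{a_1\}$ precisely because $C_{\tilde{\bG}_{2,3}}(t^{p^{l-j}})$ switches from $\tilde{\bG}_{2,3}$ to $\tilde{\bG}_2\times\tilde{\bG}_3$ at $j=a_1+1$. That third factor is exactly what your proposal omits; the paper's Proposition~\ref{Existence3Prop} builds the required semisimple element from a three-part orthogonal decomposition $V=V_1\oplus V_2\oplus V_3$ with $n_3=1$.

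A related, smaller inaccuracy: for $p\equiv 1\pmod 4$ the nonempty singletons are again realized through the three-factor ($h=3$) or two-factor ($h=2$, $a_1=a_2=0$) cases (Propositions~\ref{MainCorCase3}(b)(c), \ref{MainCorCase2}(c)), not by ``the same one-factor construction with a parity restriction'': the $h=1$ computation in Proposition~\ref{Case1Prop} always yields $W(\bar{\bB})\cong k$ when $p\equiv 1\pmod 4$.
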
 

A proof of Theorem~\ref{MainTheoremSLNSUNII} is provided in
Propositions~\ref{Existence1Prop}--\ref{Existence3Prop}, which in fact give 
more precise information.

\begin{rem}
{\rm 
Let $p$ be an odd prime, $l \geq 1$ an integer and let~$\bar{D}$ be a cyclic 
$p$-group of order~$p^{l}$. 

If $l \geq 2$, the number of intervals in $[1, l - 1]$, including the empty one, 
equals $l(l - 1)/2 + 1$. The number of subsets of $[1, l - 1]$ as in 
Theorem~\ref{MainTheoremSLNSUNI}(b)(ii) or~(iii) equals $l - 1$, and of these, 
two are intervals, if $l \geq 3$.

Thus the number of isomorphism classes of endo-permutation $k\bar{D}$-modules of 
the form $W( \bar{\bB} )$ arising in Theorem~\ref{MainTheoremSLNSUNI} 
equals~$l$, if $p \equiv 1\,\,(\mbox{\rm mod}\,\,4)$, and $l(l+1)/2 - 2$ if 
$l \geq 3$ and $p \equiv -1\,\,(\mbox{\rm mod}\,\,4)$.

On the other hand, the number of isomorphism classes of endo-permutation 
$kD$-modules equals $2^l$ since~$p$ is odd. 
}\hfill{\textsquare}
\end{rem}

\section{Preliminaries}
\label{MinimalGeneratingSets}
In this section,~$G$ denotes a finite group, and $p$ a prime.

By~$r(G)$ we denote the smallest size of a generating set of~$G$. The following 
standard results on $r(G)$ are stated without proof.

\begin{lem}
\label{MinimalGeneratingSet}
{\rm (a)} If $1 \rightarrow L \rightarrow G \rightarrow H \rightarrow 1$ is a
short exact sequence of finite groups, then $r(G) \leq r(L) + r(H)$.

{\rm (b)} If~$L$ and~$H$ are finite $p$-groups, then $r(L \times H) = r(L) + r(H)$.

{\rm (c)} If~$G$ is abelian and $H \leq G$, then $r(H) \leq r(G)$.
\end{lem}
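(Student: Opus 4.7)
The plan is to handle each of the three assertions in turn by a direct, standard argument; none of them requires any machinery beyond the Burnside basis theorem and the structure theorem for finite abelian groups. The first two parts are purely group-theoretic, while the third uses the invariant factor decomposition.

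For part~(a), I would lift generators through the short exact sequence. Choose a minimal generating set $h_1, \ldots, h_{r(H)}$ of~$H$ and pick preimages $g_1, \ldots, g_{r(H)} \in G$ under the surjection $G \twoheadrightarrow H$. Adjoin a minimal generating set $l_1, \ldots, l_{r(L)}$ of~$L$, viewed as elements of~$G$ via the injection $L \hookrightarrow G$. The subgroup $N \leq G$ generated by the union $\{g_1,\ldots,g_{r(H)},\, l_1,\ldots,l_{r(L)}\}$ contains~$L$ and maps onto~$H$, so exactness forces $N = G$. Hence $r(G) \leq r(L) + r(H)$.

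For part~(b), one inequality is the case of the split exact sequence $1 \to L \to L \times H \to H \to 1$ in~(a). For the reverse inequality I would invoke the Burnside basis theorem: for any finite $p$-group~$P$, $r(P) = \dim_{\mathbb{F}_p} P/\Phi(P)$, where $\Phi(P)$ is the Frattini subgroup. Since $\Phi$ commutes with direct products of $p$-groups, $\Phi(L \times H) = \Phi(L) \times \Phi(H)$, and the Frattini quotient of the product is the direct sum of the individual Frattini quotients as $\mathbb{F}_p$-vector spaces. Adding dimensions gives $r(L \times H) = r(L) + r(H)$.

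For part~(c), I would reduce to the case of an abelian $p$-group via the primary decomposition. Writing the finite abelian group $G$ as the direct product of its Sylow subgroups $G = \prod_p G_p$, the invariant factor decomposition shows $r(G) = \max_p r(G_p)$; the same formula holds for $H = \prod_p H_p$, and the containment $H \leq G$ implies $H_p \leq G_p$ for every prime~$p$. So it suffices to prove the statement for a finite abelian $p$-group~$G$. There one has the key identity $r(G) = \dim_{\mathbb{F}_p} G[p]$, where $G[p] := \{ g \in G \mid pg = 0 \}$ is the $p$-socle, which equals the number of cyclic summands in the invariant factor decomposition. The inclusion $H[p] \leq G[p]$ of $\mathbb{F}_p$-vector spaces then immediately yields $r(H) \leq r(G)$.

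The three statements are routine, and there is no genuine obstacle; the only conceptual point is the identification in part~(c) of $r(G)$ with $\dim_{\mathbb{F}_p} G[p]$ for an abelian $p$-group, which is what makes the subgroup inequality transparent (in contrast to the situation for non-abelian $p$-groups, where $r$ need not be monotone under taking subgroups). This is presumably why the authors are content to cite these as standard results.
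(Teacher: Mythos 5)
The paper does not give a proof of this lemma: it is explicitly labelled as a collection of ``standard results on $r(G)$ \ldots\ stated without proof,'' so there is no argument to compare yours against. Your three arguments are all correct and are exactly the standard ones: lifting generators through the short exact sequence for~(a); the Burnside basis theorem and $\Phi(L \times H) = \Phi(L) \times \Phi(H)$ for~(b); and, for~(c), reducing to abelian $p$-groups via the primary decomposition and then comparing $p$-socles, using that $r(G) = \dim_{\mathbb{F}_p} G[p]$ for a finite abelian $p$-group. One small remark: in~(b), once you have $\Phi(L \times H) = \Phi(L) \times \Phi(H)$ you get the equality of ranks directly, so you do not actually need to appeal separately to~(a) for one inequality; but this is a stylistic point, not a gap.
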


We will also need the following supplement to \cite[Lemma~$2.2$]{HL24}. Although
this is well known, we include a proof for the convenience of the reader.

\begin{lem}
\label{CentralizersModZ}
Let $Z \leq Z(G)$ be a $p$-group and let $t \in G$ be a $p$-element.
Then the index of $C_G( t )/Z$ in $C_{G/Z}( tZ )$ is a $p$-power.
In particular, if $N_G(C_G(t))/C_G(t)$ is a $p'$-group. Then 
$$C_{G/Z}( tZ ) = C_G( t )/Z.$$
\end{lem}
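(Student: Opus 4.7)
The plan is to produce an explicit homomorphism whose kernel realizes $C_G(t)/Z$ inside $C_{G/Z}(tZ)$ and whose image lies in $Z$, and then to use this together with a normalizer argument for the second assertion.

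First I would define a map
\[
\alpha \colon C_{G/Z}(tZ) \longrightarrow Z, \qquad gZ \longmapsto [g,t] = gtg^{-1}t^{-1}.
\]
For $gZ \in C_{G/Z}(tZ)$ the commutator $[g,t]$ does lie in $Z$ by definition, so the map takes values in $Z$. The main routine checks are (i) well-definedness, which follows because replacing $g$ by $gz$ with $z \in Z$ leaves $[g,t]$ unchanged, since $z$ is central and hence commutes with both $g^{-1}$ and $t$; and (ii) homomorphism property, which follows from the standard commutator identity $[g_1 g_2, t] = [g_1,t]\,{}^{g_1}[g_2,t]$ together with the fact that $[g_2,t] \in Z \le Z(G)$, so that ${}^{g_1}[g_2,t] = [g_2,t]$. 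The kernel consists of those $gZ$ with $[g,t] = 1$, i.e.\ with $g \in C_G(t)$, so $\ker(\alpha) = C_G(t)/Z$. Hence
\[
C_{G/Z}(tZ)\,/\,(C_G(t)/Z) \hookrightarrow Z,
\]
and since $Z$ is a $p$-group the index $[C_{G/Z}(tZ) : C_G(t)/Z]$ is a $p$-power, proving the first assertion.

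For the second assertion, I would let $N \le G$ denote the full preimage of $C_{G/Z}(tZ)$; since $Z \subseteq Z(G) \subseteq C_G(t)$, the index in question equals $[N : C_G(t)]$. For any $g \in N$ we have $gtg^{-1} = tz$ for some $z \in Z$, and because $z$ is central, $C_G(tz) = C_G(t) \cap C_G(z) = C_G(t)$; thus $gC_G(t)g^{-1} = C_G(gtg^{-1}) = C_G(t)$, so $N \le N_G(C_G(t))$. Therefore $[N : C_G(t)]$ divides $[N_G(C_G(t)) : C_G(t)]$, which is coprime to $p$ by hypothesis, while it is a $p$-power by the first part. Hence the index equals $1$, yielding $C_{G/Z}(tZ) = C_G(t)/Z$.

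The only mildly delicate point is verifying that $\alpha$ is a homomorphism, which needs the centrality of $Z$ in $G$ (not merely in $C_G(t)$), so that the inner twist on commutator values is trivial; everything else is essentially bookkeeping. I do not anticipate any real obstacle.
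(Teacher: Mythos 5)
Your proof is correct and takes essentially the same route as the paper: both use the commutator map $g \mapsto [g,t]$ with values in $Z$ and kernel $C_G(t)$, and both derive the second assertion from the observation that the preimage of $C_{G/Z}(tZ)$ sits inside $N_G(C_G(t))$. You merely spell out a few verifications (well-definedness, the homomorphism identity, and why $C_G(tz)=C_G(t)$) that the paper leaves implicit.
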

\begin{proof}
Let $C \leq G$ with $C/Z = C_{G/Z}( tZ )$. Then 
$C_G( t ) \leq C \leq N_G(C_G(t))$, so that the second assertion follows from 
the first.
The map 
$$C \rightarrow Z, c \mapsto ctc^{-1}t^{-1}$$ is a group homomorphism with 
kernel $C_G( t )$. This proves
our first assertion. 
\end{proof}

Finally, we state a result on Brauer pairs needed later on.
\begin{lem}
\label{BrauerPairsLem}
Let $D \leq G$ be an abelian $p$-subgroup. Let $E \leq D$ and $H := C_G( E )$.
Then $D \leq H$. Suppose that~$\bc$ is a $p$-block of~$H$ with defect group~$D$.
Let~$\bB$ denote the $p$-block of~$G$ such that $(E,\bc)$ is a $\bB$-Brauer 
pair. Then~$\bB$ has defect group~$D$.
\end{lem}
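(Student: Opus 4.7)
The plan is to exhibit~$D$ as a defect group of~$\bB$ by descending~$\bc$ to a Brauer correspondent in $C_G(D)$ and then inducing back up to~$G$ via transitivity of block induction, together with Brauer's first main theorem.

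To begin, I would note that because $D$ is abelian and contains~$E$, every element of~$D$ centralizes~$E$, whence $D \leq C_G(E) = H$ and also $C_G(D) \leq H$. Let $\bc_D$ be a Brauer correspondent of~$\bc$ in $C_H(D) = C_G(D)$; then $\bc_D$ is a block of $C_G(D)$ with defect group~$D$ satisfying $\bc_D^{H} = \bc$. By the transitivity of block induction we would then obtain
\[
\bc_D^{G} \;=\; \bigl( \bc_D^{H} \bigr)^{G} \;=\; \bc^{G} \;=\; \bB,
\]
the last equality being the defining property of the $\bB$-Brauer pair $(E, \bc)$.

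It remains to show that $\bc_D^{G}$ has defect group~$D$. This is essentially Brauer's first main theorem applied to the $p$-subgroup~$D$ of~$G$: a block of $C_G(D)$ with defect group~$D$ induces, via an intermediate step through $N_G(D)$, to a block of~$G$ with defect group~$D$. Applying this to $\bc_D$ yields that $\bB = \bc_D^{G}$ has defect group~$D$. The only real obstacle is the routine verification that the three block inductions appearing above are well-defined and that transitivity applies; this follows from the containment $C_G(D) \leq H \leq G$ together with the hypotheses on the defect group of~$\bc$ and on the Brauer pair~$(E, \bc)$.
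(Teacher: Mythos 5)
Your first two steps match the paper's argument: you pass to a Brauer correspondent $\bc_D$ of $\bc$ in $C_H(D)=C_G(D)$ (which has defect group $D$; the paper cites Linckelmann, Corollary~6.3.12 for this) and then observe via transitivity that $\bc_D^G = \bB$. Up to there the two proofs agree.

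The final step, however, contains a genuine gap. It is simply not true that a block of $C_G(D)$ with defect group $D$ induces to a block of $G$ with defect group $D$, and Brauer's first main theorem does not say this. For a concrete counterexample take $G=S_4$, $p=2$, $D=V_4=O_2(G)$: then $C_G(D)=V_4$, its unique $2$-block $\bc_D$ has defect group $V_4$, yet $\bc_D^G$ is the principal block of $S_4$, whose defect group is a full Sylow $2$-subgroup $D_8 \supsetneq V_4$. In general the defect group of $\bc_D^G$ strictly contains $D$ by an amount controlled by the $p$-part of the inertial quotient $N_G(D,\bc_D)/C_G(D)$ (this is Brauer's \emph{extended} first main theorem, applied to the step from $C_G(D)$ up to $N_G(D)$); the ordinary first main theorem only concerns the passage from $N_G(D)$ to $G$ once you already have a block of $N_G(D)$ with defect group $D$. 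The paper instead argues via Alperin--Brou\'{e}'s Theorem~3.10, recognizing $(D,\bc_0)$ as a \emph{maximal} $\bB$-Brauer pair, and then reads off the defect group of $\bB$ from the maximal pair via Linckelmann's Theorem~6.3.7. That maximality step is where the actual content of the lemma resides, and your appeal to ``Brauer's first main theorem'' skips it.
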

\begin{proof}
As $E \leq D$, we have $C_G( D ) \leq H$ and thus $C_G( D ) = C_H( D )$. 
Let~$\bc_0$ denote a Brauer correspondent of~$\bc$ in $C_H( D )$. Then~$\bc_0$
has defect group~$D$ by \cite[Corollary~$6.3.12$]{LinckBook}. Hence $(D,\bc_0)$
is a maximal $\bB$-Brauer pair in~$G$ by \cite[Theorem~$3.10$]{AlpBro}. In 
turn,~$D$ is a defect group of~$\bB$ by \cite[Theorem~$6.3.7$]{LinckBook}.
\end{proof}

We adopt a common and useful diction in a Clifford theory situation. Namely, 
if~$N$ is a normal subgroup of~$G$ and if~$\chi$ and~$\psi$ are irreducible
$K$-characters of~$G$, respectively~$N$, we say that~$\psi$ lies below~$\chi$
or that~$\chi$ lies above~$\psi$, if~$\psi$ is a constituent of the restriction
of~$\chi$ to~$N$.

\section{Analysis}\label{sec:Analysis}
\label{TheSpecialLinearAndUnitaryGroups}

Here, we analyze the relevant configurations, leading to a proof of 
Theorem~\ref{MainTheoremSLNSUNI}.

\subsection{The groups}
\label{TheGroups}
Let us begin by introducing the groups and some corresponding notation,
used throughout this section.

In order not to overload the notation, we slightly change the global conventions
used in~\cite{HL25}. As our focus is on the special linear and unitary groups,
$\SL_n^{\varepsilon}(q)$ is denoted by~$G$, and $\GL_n^{\varepsilon}(q)$
by~$\tilde{G}$; see Notation~\ref{HypoI} below. Also,~$D$ does not necessarily 
denote a cyclic group, and symbols such as~$D_1$ attain a new significance.

\addtocounter{thm}{1}
\begin{notn}
\label{HypoI}
{\rm 
(i) Let $\varepsilon \in \{ -1, 1 \}$ and $n \geq 2$ be integers, $p$ an odd 
prime and~$q$ a power of a prime~$r$ with $p \mid q - \varepsilon$. 
Let~$\mathbb{F}$ denote an algebraic closure of the finite field with~$r$ 
elements. 

(ii) Put $\tilde{\bG} := \GL_n( \mathbb{F} )$ and $\bG :=
\{ g \in \tilde{\bG} \mid \det(g) = 1 \}$. Let $\bV := \mathbb{F}^n$, the 
natural vector space for~$\tilde{\bG}$.

(iii) Let $F := F_{\varepsilon}$ denote a Steinberg morphism of~$\tilde{\bG}$ such 
that $\tilde{G} = \tilde{\bG}^F = \GL_n^\varepsilon( q )$. Then~$\bG$ is 
$F$-stable and $G = \bG^F = \SL_n^\varepsilon( q )$. Put $Z := Z( G )$.
Notice that $|Z| = \gcd( q - \varepsilon, n )$.

(iv) Let $\delta = 1$ if $\varepsilon = 1$, and $\delta = 2$, if 
$\varepsilon = -1$, and put $V := \mathbb{F}_{q^\delta}^n \subseteq \bV$. 
Then~$V$ is the 
natural vector space for~$\tilde{G}$. (This is consistent with the notation 
introduced in \cite[Subsection~$3.1$]{HL25}.)

(v) Let~$p^a$ and~$p^b$ 
denote the highest powers of~$p$ dividing $q - \varepsilon$,
respectively~$n$, and put $c := \min\{a,b\}$. Then~$p^c$ is the highest
power of~$p$ dividing~$|Z|$.
}\hfill{\textsquare}
\end{notn}

\addtocounter{subsection}{1}
\subsection{Preliminaries on~$\tilde{G}$}
Before we continue, we record two results on~$\tilde{G}$. The first of these is
purely group theoretical in nature.

\addtocounter{thm}{1}
\begin{lem}
\label{CentralizersAndDeterminants}
{\rm (a)} Let $\tilde{\bH} \leq \tilde{\bG}$ be a regular subgroup. Then 
$\bH := \bG \cap \tilde{\bH}$ is a regular subgroup of~$\bG$ and
$$[\tilde{H}\colon\!H] = q - \varepsilon.$$

{\rm (b)} Let $s \in \tilde{G}$ be semisimple. Then
$$[C_{\tilde{G}}( s )\colon\!C_G( s )] = q - \varepsilon.$$
In particular, if $s \in G$, the $G$-conjugacy class of~$s$ is a
$\tilde{G}$-conjugacy class.
\end{lem}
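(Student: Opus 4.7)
My plan is to derive both parts from the short exact sequence of algebraic groups induced by the determinant map, and then to apply the Lang--Steinberg theorem to pass to $F$-fixed points.

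For part~(a), the key observation is that a regular subgroup $\tilde{\bH}$ of $\tilde{\bG}=\GL_n(\mathbb{F})$ is connected, reductive and of maximal rank, so it contains a maximal torus $\tilde{\bT}$ of $\tilde{\bG}$. The determinant $\det\colon\tilde{\bG}\to\mathbb{F}^{\times}$ is already surjective on any such $\tilde{\bT}$, hence its restriction to $\tilde{\bH}$ is surjective with kernel $\bH=\bG\cap\tilde{\bH}$. I would first verify that $\bH$ is regular in $\bG$: by Borel--de Siebenthal theory, $\tilde{\bH}$ is generated by $\tilde{\bT}$ together with the root subgroups $U_{\alpha}$ for $\alpha$ in a closed subsystem of the root system of $\tilde{\bG}$. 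Every $U_{\alpha}$ is unipotent and therefore lies in $\bG=\SL_n(\mathbb{F})$, so $\bH$ is generated by the connected maximal torus $\tilde{\bT}\cap\bG$ of $\bG$ and the same root subgroups, and is hence connected, reductive and of maximal rank.

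Given the connectedness of $\bH$, the short exact sequence
\[
1\longrightarrow\bH\longrightarrow\tilde{\bH}\xrightarrow{\det}\mathbb{F}^{\times}\longrightarrow 1
\]
of connected algebraic groups yields, via Lang--Steinberg applied to~$\bH$, the exact sequence
\[
1\longrightarrow H\longrightarrow\tilde{H}\longrightarrow(\mathbb{F}^{\times})^{F}\longrightarrow 1
\]
on $F$-fixed points. Since $\det\circ F=F\circ\det$ and the Steinberg morphism $F=F_{\varepsilon}$ acts on $\mathbb{F}^{\times}$ as $x\mapsto x^{q}$ when $\varepsilon=1$ and as $x\mapsto x^{-q}$ when $\varepsilon=-1$, one has $|(\mathbb{F}^{\times})^{F}|=q-\varepsilon$ in both cases, proving $[\tilde{H}\colon\!H]=q-\varepsilon$.

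For part~(b), since $\tilde{\bG}=\GL_n(\mathbb{F})$ has connected centre, the centralizer $C_{\tilde{\bG}}(s)$ of a semisimple element $s$ is connected (for $\GL_n$ this is immediate from the eigenspace decomposition), reductive, and contains a maximal torus of $\tilde{\bG}$ through~$s$, so it is regular in $\tilde{\bG}$. Its connectedness also gives $C_{\tilde{G}}(s)=C_{\tilde{\bG}}(s)^{F}$ and $C_G(s)=(C_{\tilde{\bG}}(s)\cap\bG)^{F}$, so applying part~(a) with $\tilde{\bH}:=C_{\tilde{\bG}}(s)$ yields $[C_{\tilde{G}}(s)\colon\!C_G(s)]=q-\varepsilon$. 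For the last assertion, if $s\in G$, combining this with $[\tilde{G}\colon\!G]=q-\varepsilon$ gives $[\tilde{G}\colon\!C_{\tilde{G}}(s)]=[G\colon\!C_G(s)]$, so the $\tilde{G}$- and $G$-conjugacy classes of~$s$ have equal cardinality; since the $G$-class is contained in the $\tilde{G}$-class, the two coincide. The main obstacle in this approach is establishing the connectedness of $\bH$ in part~(a) so that Lang--Steinberg applies; the root-subgroup argument above handles it, and equivalently one can invoke the classification of connected reductive subgroups of maximal rank in $\tilde{\bG}$.
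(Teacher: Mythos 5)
Your argument is correct and follows the same underlying route as the paper: the paper simply invokes \cite[Lemma~2.5.3]{HL25} for the regular embedding $\bG\hookrightarrow\tilde{\bG}$ (together with the standard facts that $C_{\tilde{\bG}}(s)$ is connected reductive of maximal rank when $\tilde{\bG}$ has connected centre), whereas you unroll that cited lemma explicitly via the determinant exact sequence, Lang--Steinberg applied to the connected kernel $\bH$, and the computation $|(\mathbb{F}^{\times})^{F}|=q-\varepsilon$. So the proposal is correct and matches the paper's approach in substance, just carried out from first principles rather than by citation.
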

\begin{proof}
(a) The inclusion $\bG \rightarrow \tilde{\bG}$ is a regular embedding; see
\cite[Definition~$1.7.1$]{GeMa}. Moreover,  $C_{\tilde{\bG}}( s )$ is a regular 
subgroup of~$\tilde{\bG}$.
The claims follow from \cite[Lemma~$2.5.3$]{HL25}.
\end{proof}

For the following result recall the notion~$r(H)$, introduced in 
Subsection~\ref{MinimalGeneratingSets} for a finite group~$H$.
\begin{lem}
\label{RankTwoDefectGroup}
Let~$\tilde{D}$ be an abelian defect group of some $p$-block of~$\tilde{G}$
with $r(\tilde{D}) = 2$. Then $p^{2a} \mid |\tilde{D}|$.
\end{lem}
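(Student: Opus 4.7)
The plan is to pin down $\tilde{D}$ explicitly as the full Sylow $p$-subgroup of an $F$-stable maximal torus of $\tilde{G}$, and then to read off the desired divisibility from the known structure of such tori.

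First I would embed $\tilde{D}$ into an $F$-stable maximal torus. Since $p\neq r$, the abelian $p$-subgroup $\tilde{D}$ consists of semisimple elements of $\tilde{\bG}$, so its centraliser $\tilde{\bL}:=C_{\tilde{\bG}}(\tilde{D})$ is a connected $F$-stable Levi subgroup of $\tilde{\bG}=\GL_n(\mathbb{F})$ with $\tilde{D}\leq Z(\tilde{\bL})$. Choosing any $F$-stable maximal torus $\tilde{\bT}$ of $\tilde{\bL}$ (which is automatically maximal in $\tilde{\bG}$), I obtain an $F$-stable maximal torus $\tilde{T}:=\tilde{\bT}^{F}$ of $\tilde{G}$ with $\tilde{D}\leq\tilde{T}$. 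Next I would invoke the standard fact that an abelian defect group is a Sylow $p$-subgroup of its own centraliser, so $\tilde{D}\in\Syl_{p}(C_{\tilde{G}}(\tilde{D}))$. Since $\tilde{T}$ is abelian and contains $\tilde{D}$, one has $\tilde{T}\leq C_{\tilde{G}}(\tilde{D})$, hence $\tilde{T}_{p}\leq\tilde{D}$; together with the obvious inclusion $\tilde{D}\leq \tilde{T}_{p}$ this forces the key equality $\tilde{D}=\tilde{T}_{p}$.

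The final step is purely computational. The $F$-stable maximal torus $\tilde{T}$ is parameterised by a partition $\lambda=(\lambda_{1},\ldots,\lambda_{k})$ of $n$, with
$$\tilde{T}\cong \prod_{i=1}^{k}C_{q^{\lambda_{i}}-\varepsilon^{\lambda_{i}}}.$$
Because $p$ is odd and $p\mid q-\varepsilon$, lifting-the-exponent applied with $x=q$ and $y=\varepsilon$ (uniformly in both signs) gives $v_{p}(q^{\lambda_{i}}-\varepsilon^{\lambda_{i}})=a+v_{p}(\lambda_{i})\geq a$ for every $i$. Hence $\tilde{T}_{p}$ is abelian of rank exactly $k$, and the hypothesis $r(\tilde{D})=r(\tilde{T}_{p})=2$ forces $k=2$. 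Thus $|\tilde{D}|=p^{2a+v_{p}(\lambda_{1}\lambda_{2})}$, and in particular $p^{2a}\mid|\tilde{D}|$. The only delicate point in this argument is the identification $\tilde{D}=\tilde{T}_{p}$, which rests on the standard self-centralising property of abelian defect groups; the rest is torus bookkeeping.
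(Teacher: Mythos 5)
The proposal has a genuine gap at the step labelled ``the standard fact that an abelian defect group is a Sylow $p$-subgroup of its own centraliser.'' This is not a theorem: what is true in general is only that an abelian defect group $\tilde{D}$ of a block satisfies $\tilde{D} = O_p( Z( C_{\tilde{G}}( \tilde{D} ) ) )$, and $O_p(Z(\cdot))$ can be much smaller than a Sylow $p$-subgroup. Moreover, the statement fails inside $\tilde{G}=\GL_n^\varepsilon(q)$ itself, in exactly the setting of the lemma. Take $\tilde{G}=\GL_3(7)$, $p=3$, so $a=1$, and let $s$ be a semisimple $3'$-element with $C_{\tilde{G}}(s)\cong\GL_1(7)\times\GL_1(49)$ (for instance $s$ with one rational eigenvalue and a pair of conjugate eigenvalues of order $8$ in $\mathbb{F}_{49}^*$). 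The Fong--Srinivasan block labelled by $s$ has defect group $\tilde{D}\in\Syl_3(C_{\tilde{G}}(s))\cong C_3\times C_3$, so $\tilde{D}$ is abelian of rank $2$. But every element of order $3$ in $\mathbb{F}_{49}^*$ already lies in $\mathbb{F}_7^*$ and hence acts on the $2$-dimensional $\mathbb{F}_7$-space $\mathbb{F}_{49}$ as a scalar; therefore $C_{\tilde{G}}(\tilde{D})=\GL_1(7)\times\GL_2(7)$, whose order has $3$-part $3^3=27>|\tilde{D}|=9$. So $\tilde{D}$ is not a Sylow $3$-subgroup of $C_{\tilde{G}}(\tilde{D})$, and consequently a general $F$-stable maximal torus $\tilde{\bT}$ of $\tilde{\bL}=C_{\tilde{\bG}}(\tilde{D})$ need not satisfy $\tilde{T}_p\leq\tilde{D}$ (indeed, the split torus of $\GL_1(7)\times\GL_2(7)$ has $p$-part $C_3^3$). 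Your chain $\tilde{T}_p\leq\tilde{D}\leq\tilde{T}_p$ therefore collapses.

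Once this step is removed, the rest of the argument has no way to pin down $\tilde{D}$ as the Sylow $p$-subgroup of a two-part torus, which is the whole content. The information you are missing is precisely what Fong and Srinivasan provide: Theorem~(3C) of \cite{fs2} identifies the defect group of a block of $\tilde{G}$ as a Sylow $p$-subgroup of the centraliser $C_{\tilde{G}}(s)$ of the associated semisimple $p'$-element $s$, and from that description, a rank-$2$ abelian defect group necessarily has order divisible by $p^{2a}$ (your final ``torus bookkeeping'' computation is essentially the right one, but it must be applied to $C_{\tilde{G}}(s)$, not to $C_{\tilde{G}}(\tilde{D})$). The paper's proof simply cites Fong--Srinivasan; your argument cannot bypass that input via a general block-theoretic principle, because no such principle exists.
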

\begin{proof}
This follows from~\cite[Theorem~(3C)]{fs2}.
\end{proof}

\addtocounter{subsection}{2}
\subsection{The blocks and their defect groups}
We introduce the principal object of our study and set up further notation. 

\addtocounter{thm}{1}
\begin{notn}
\label{HypoII}
{\rm 
(i) Let $Y \leq Z$ be a $p$-group, put $\bar{G} := G/Y$, and write 
$\bar{\ }: G \rightarrow \bar{G}$ for the natural epimorphism. 

(ii) Let~$\bar{\bB}$ denote a $p$-block of~$\bar{G}$ with a non-trivial cyclic 
defect group. 

(iii) Let~$\bB$ denote the $p$-block of~$G$ dominating~$\bar{\bB}$ and let~$D$ 
be a defect group of~$\bB$. (Then $\bar{D} = D/Y$ is a defect group 
of~$\bar{\bB}$ by \cite[Lemma~$2.4.1$]{HL25}.)

(iv) Choose $t \in D$ with $\bar{D} = \langle \bar{t} \rangle$.

(v) Define the non-negative integer~$c'$ by 
$p^{c'} = |\langle t \rangle \cap Y|$. 
}\hfill{\textsquare}
\end{notn}

Notice that, as~$Y$ is a $p$-group, for any block~$\bB$ of~$G$ there is a 
unique block~$\bar{\bB}$ dominated by~$\bB$; see \cite[Theorem~$5.8.11$]{NaTs}.
Let us record some easy observations.

\begin{lem}
\label{YEqualOpZ}
{\rm (a)} We have $Y \leq Z( \tilde{G} )$.

{\rm (b)} The kernel of the natural epimorphism 
$\langle t \rangle \rightarrow \bar{D}$ equals $\langle t \rangle \cap Y$, so
that $|\bar{D}| = |t|/p^{c'}$.

{\rm (c)} We have $O_p( Z ) = Z \cap D$. Moreover, 
$D = \langle t, Y \rangle = \langle t, O_p(Z) \rangle$. In particular, 
$r(D) \leq 2$.

{\rm (d)} If $\langle t \rangle \cap O_p( Z ) = \{ 1 \}$, then $Y = O_p( Z )$.

{\rm (e)} If $|D| = |t|p^{a}$ then 
$\langle t \rangle \cap O_p( Z ) = \{ 1 \}$ and $p^a \mid n$.

{\rm (f)} If~$D$ is cyclic, then $D = \langle t \rangle$.
\end{lem}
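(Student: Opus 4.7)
The plan is to handle the six parts in order, relying throughout on two standing inputs: first, that $Z = Z(G)$ is the cyclic group of scalar matrices of order $\gcd(n, q - \varepsilon)$, hence $Z \leq Z(\tilde{G})$ and $O_p(Z)$ is cyclic of order~$p^c$; and second, the standard fact (Brauer) that $O_p(Z(G))$ is contained in every defect group of every $p$-block of~$G$. With these, most parts are brief unpackings of Notation~\ref{HypoII}.

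Parts (a) and (b) are essentially immediate. For (a), $Y \leq Z$ consists of scalar matrices, which centralise all of $\tilde{G}$. For (b), the kernel of $G \twoheadrightarrow \bar{G}$ is~$Y$, and restricting this epimorphism to $\langle t \rangle$ yields kernel $\langle t\rangle \cap Y$ of order $p^{c'}$, giving the order formula. For (c), $Z \cap D$ is a $p$-subgroup of~$Z$ and so is contained in $O_p(Z)$, while the reverse inclusion is the standard fact above applied to~$\bB$. Hence $O_p(Z) = Z \cap D$. Since $Y \leq O_p(Z) \leq D$ and $\langle t, Y\rangle/Y = \langle \bar t\rangle = \bar D$, one has $D = \langle t, Y\rangle$; replacing $Y$ by the possibly larger subgroup $O_p(Z)$ (still inside $D$) gives $D = \langle t, O_p(Z)\rangle$, and $r(D) \leq 2$ follows because $O_p(Z)$ is cyclic.

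For (d), the hypothesis $\langle t\rangle \cap O_p(Z) = \{1\}$ promotes the product $D = \langle t\rangle \cdot O_p(Z)$ to an internal direct product; since $Y \leq O_p(Z)$, passing to the quotient yields $\bar D \cong \langle t\rangle \times (O_p(Z)/Y)$. Cyclicity of~$\bar D$, together with $\bar t \neq 1$ (which forces $\langle t\rangle \neq 1$), makes $O_p(Z)/Y$ trivial, hence $Y = O_p(Z)$. For (e), order-counting in $D = \langle t\rangle \cdot O_p(Z)$ gives $|D| = |t|\,p^c/|\langle t\rangle \cap O_p(Z)|$; the assumption $|D| = |t|p^a$ together with $c = \min\{a,b\} \leq a$ forces $c = a$ and trivial intersection, and $a = c \leq b$ is precisely $p^a \mid n$. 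For (f), the subgroup lattice of a cyclic $p$-group is a chain, so from $D = \langle t, Y\rangle$ one of $\langle t\rangle$, $Y$ already equals~$D$; the possibility $Y = D$ would make $\bar D$ trivial, contradicting (ii), so $D = \langle t\rangle$.

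The only mildly subtle point is (d), where one must recognise that the direct-product decomposition of~$\bar D$ really has $\langle t\rangle$ itself (and not a proper quotient) as one factor; this is automatic because $\langle t\rangle \cap Y \leq \langle t\rangle \cap O_p(Z) = \{1\}$. Beyond this, the lemma is a bookkeeping exercise combining the cyclicity of $O_p(Z)$ with the fact that a cyclic $p$-group has a totally ordered subgroup lattice.
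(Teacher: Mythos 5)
Your proof is correct and, modulo verbosity, follows essentially the same route as the paper: you use that $O_p(Z)$ lies in every defect group for part~(c), the internal direct product decomposition $D = \langle t\rangle \times O_p(Z)$ for parts~(d) and~(e), and the chain structure of the subgroup lattice of a cyclic $p$-group for part~(f), all of which match the paper's (much terser) arguments.
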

\begin{proof}
(a) and (b) are trivial.

(c) Since $D$ is a defect group, we have $O_p( Z ) \leq D$, implying the first 
assertion. The other assertions follow
from $\bar{D} = \langle \bar{t} \rangle$ and $Y \leq O_p(Z)$.

(d) By~(c) and the assumption, $D = \langle t \rangle \times O_p( Z )$. Since 
$Y \leq O_p( Z )$, the quotient $D/Y$ can only be cyclic if $Y = O_p( Z )$.

(e) We have $|t|p^a = |D| \leq |t||Y| \leq |t||O_p(Z)| \leq |t|p^a$, implying
the claims.

(f) If~$D$ is cyclic and~$\bar{D}$ is non-trivial, the image of a proper subgroup 
of~$D$ is a proper subgroup of~$\bar{D}$. As the image of $\langle t \rangle$
equals~$\bar{D}$, we obtain our claim.
\end{proof}

\addtocounter{subsection}{2}
\subsection{The local configuration}
\label{LocalConfiguration}
Let us have a look at the local situation. We begin by introducing further
notation.

\addtocounter{thm}{1}
\begin{notn}
\label{HypoIII}
{\rm
(i) Put $C := C_{G}( D )$ and $\tilde{C} := C_{\tilde{G}}( D )$. 
Then $C \unlhd \tilde{C}$.

(ii) Let~$\bc$ be a Brauer correspondent of~$\bB$ in~$C$. Then~$D$ is a defect 
group of~$\bc$.

(iii) Let~$\tilde{\bc}$ be a block of~$\tilde{C}$ covering~$\bc$ and
let~$\tilde{D}$ be a defect group of~$\tilde{\bc}$ with $D = C \cap \tilde{D}$.
}\hfill{\textsquare}
\end{notn}

Notice that $C = C_G( t )$ and $\tilde{C} = C_{\tilde{G}}( t )$, as 
$O_p( Z ) \leq Z( \tilde{G} )$ and $D = \langle t, O_p( Z ) \rangle$.
The groups and blocks in question are displayed in the diagram of 
Figure~\ref{DiagramForSLn}. 
The invariant $W( \bar{\bB} )$ will be computed with the help of~$\tilde{\bc}$.

\begin{figure}
\caption{\label{DiagramForSLn} Some subgroups and blocks of $\GL^\varepsilon_n( q )$, I}
$$
\begin{xy}
\xymatrix@C+1pt{
 & G, {\bB} \ar@{-}[dd] \ar@{->>}[rd] & \\
\tilde{C} = C_{\tilde{G}}( D ), \tilde{\bc} \ar@{-}[rd] & & \bar{G}, \bar{\bB} \\
& C = C_G( D ), {\bc} &  
}
\end{xy}
$$
\end{figure}

\begin{lem}
\label{TildeCmoduloC}
The following statements hold.

{\rm (a)} The group $\tilde{C}/C$ is cyclic of order $q - \varepsilon$.

{\rm (b)} The group $\tilde{D}/D$ is cyclic of order dividing~$p^a$. 
	In particular, $\tilde{D}$ is abelian and $r( \tilde{D} ) \leq 
	r(D) + 1 \leq 3$.

{\rm (c)} We have $D = G \cap \tilde{D}$ and $D = G \cap O_p( Z( \tilde{C} ) )$.

{\rm (d)} If $\tilde{D}$ is the Sylow $p$-subgroup of a maximal torus of~$\tilde{G}$,
then $|\tilde{D}| = |D|p^a$.

\end{lem}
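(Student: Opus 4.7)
The plan is to establish (a)--(d) in turn, each resting on the previous. The key preliminary observation I would exploit throughout is that $\tilde{C} = C_{\tilde{G}}(t)$ and $C = C_G(t)$: since $D = \langle t, O_p(Z)\rangle$ by Lemma~\ref{YEqualOpZ}(c) and $O_p(Z) \leq Z(\tilde{G})$ by Lemma~\ref{YEqualOpZ}(a), centralizing $D$ coincides with centralizing~$t$. For (a), note that $p \mid q-\varepsilon$ forces $p \neq r$, so $t$ is a semisimple element of $\tilde{G}$; Lemma~\ref{CentralizersAndDeterminants}(b) then yields $[\tilde{C}:C] = q-\varepsilon$. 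The determinant identifies $\tilde{G}/G$ with a cyclic group of order $q-\varepsilon$, and $\tilde{C} \cap G = C$ shows that $\tilde{C}/C$ embeds into $\tilde{G}/G$; matching orders then give (a).

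For (b), the inclusions $\tilde{D} \leq \tilde{C}$ and $D = \tilde{D} \cap C$ (Notation~\ref{HypoIII}(iii)) give $\tilde{D}/D \hookrightarrow \tilde{C}/C$, which by (a) is cyclic of order $q-\varepsilon$; as $\tilde{D}/D$ is a $p$-group, it is cyclic of order dividing $p^a$. For abelianness I would note $t \in Z(\tilde{C})$ and $O_p(Z) \leq Z(\tilde{G}) \leq Z(\tilde{C})$, so $D \leq Z(\tilde{C})$ and in particular $D \leq Z(\tilde{D})$; consequently $\tilde{D}/Z(\tilde{D})$ is a quotient of the cyclic group $\tilde{D}/D$, forcing $\tilde{D}$ abelian. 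The rank bound $r(\tilde{D}) \leq r(D)+1 \leq 3$ follows by combining Lemma~\ref{MinimalGeneratingSet}(a) with the estimate $r(D)\leq 2$ from Lemma~\ref{YEqualOpZ}(c).

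For (c), the inclusion $D \leq G \cap \tilde{D}$ is clear, and $G \cap \tilde{D} \leq G \cap \tilde{C} = C$ gives $G \cap \tilde{D} \leq C \cap \tilde{D} = D$. For the second equality, $D \leq Z(\tilde{C})$ is a $p$-group, so $D \leq O_p(Z(\tilde{C}))$; conversely, $O_p(Z(\tilde{C}))$ is a normal $p$-subgroup of $\tilde{C}$, hence lies in the defect group $\tilde{D}$ of $\tilde{\bc}$, so $G \cap O_p(Z(\tilde{C})) \leq G \cap \tilde{D} = D$ by the first assertion of (c).

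Finally, for (d), set $T := G \cap \tilde{T}$, where $\tilde{T}$ is the maximal torus in question. Lemma~\ref{CentralizersAndDeterminants}(a), applied to the regular subgroup $\tilde{\bT}$ of $\tilde{\bG}$ of which $\tilde{T}$ is the $F$-fixed points, gives $[\tilde{T}:T] = q-\varepsilon$. Decomposing the abelian group $\tilde{T}$ as the direct product of its Sylow $p$-subgroup $\tilde{D}$ and its $p'$-part, and decomposing $T$ compatibly, I would read off that the Sylow $p$-subgroup of $\tilde{T}/T$ is $\tilde{D}T/T \cong \tilde{D}/(\tilde{D}\cap T)$, of order equal to the $p$-part $p^a$ of $q-\varepsilon$. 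Since $\tilde{D} \leq \tilde{T}$, part (c) gives $\tilde{D} \cap T = \tilde{D} \cap G = D$, so $|\tilde{D}| = p^a|D|$. The step I expect to require the most care is this Sylow identification in $\tilde{T}/T$---one needs $\tilde{D}T/T$ to be the \emph{full} Sylow $p$-subgroup, not merely a $p$-subgroup---but the direct-product decomposition of the abelian torus settles this cleanly.
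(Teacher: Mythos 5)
Your proof is correct and follows the same overall structure as the paper's: part (a) via the determinant/regular-embedding index formula, part (b) via $\tilde D/D \hookrightarrow \tilde C/C$ and the observation $D \leq Z(\tilde D)$, part (d) via the Sylow $p$-subgroup of the maximal torus and $[\tilde T : T] = q - \varepsilon$. The one place where you take a genuinely different route is the reverse inclusion $G \cap O_p(Z(\tilde C)) \leq D$ in (c): you observe that $O_p(Z(\tilde C)) \leq O_p(\tilde C) \leq \tilde D$ because $\tilde D$ is a defect group of a block of $\tilde C$, and then apply the already-established first equality $G \cap \tilde D = D$. The paper instead shows $G \cap Z(\tilde C) \leq Z(C)$, deduces $G \cap O_p(Z(\tilde C)) \leq O_p(Z(C))$, and then invokes $O_p(Z(C)) = D$ from Lemma~2.1 of Part~I. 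Your route is a bit more self-contained since it only uses the elementary containment $O_p(H) \leq P$ for any defect group $P$ of a block of $H$, whereas the paper leans on an auxiliary result about $O_p(Z(C_G(D)))$; both are perfectly valid, and yours arguably reads more cleanly once the first half of (c) is in hand.
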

\begin{proof}
(a) This follows from Lemma~\ref{CentralizersAndDeterminants}(a). 

(b) We have $\tilde{D}/D = \tilde{D}/(C \cap \tilde{D}) \cong \tilde{D}C/C 
\leq \tilde{C}/C$, so that the first claim is a consequence of~(a). In 
particular,~$\tilde{D}$ is abelian, as $D \leq Z( \tilde{D} )$. The next 
claim follows from the first and Lemma~\ref{MinimalGeneratingSet}(a).

(c) The first assertion is clear, as $C = G \cap \tilde{C}$. 

To prove the second assertion, we first show that 
$D \leq G \cap O_p( Z( \tilde{C} ) )$.
As $D \leq G$, it suffices to show that $D \leq O_p( Z( \tilde{C} ) )$.
Now $D$ is abelian and so $D \leq C_{\tilde{G}}( D ) = \tilde{C}$ and
$[D,\tilde{C}] = \{ 1 \}$. Hence $D \leq Z( \tilde{C} )$, and as~$D$
is a $p$-group, we obtain $D \leq O_p( Z( \tilde{C} ) )$.
To prove the reverse inclusion, first observe that 
$G \cap Z( \tilde{C} ) \leq Z( C )$.
As $G \cap O_p( Z( \tilde{C} ) )$ is a $p$-group, this implies 
$G \cap O_p( Z( \tilde{C} ) ) \leq O_p( Z( C ) )$. Now $O_p( Z( C ) ) = D$ by 
\cite[Lemma~$2.1$]{HL24}, and our proof is complete.

(d) Assume that $\tilde{D}$ is the Sylow $p$-subgroup of the maximal
torus~$\tilde{T}$ of~$\tilde{G}$. Then $D = G \cap \tilde{D} = 
G \cap \tilde{T} \cap \tilde{D}$ is the Sylow $p$-subgroup of the maximal
torus $T := G \cap \tilde{T}$ of $G$. Since $[\tilde{T}\colon\!T] = 
q - \varepsilon$ by Lemma~\ref{CentralizersAndDeterminants}(a), the claim 
follows. 
\end{proof}

\addtocounter{subsection}{2}
\subsection{Analyzing the local configuration}

Recall from Notation~\ref{HypoI} that~$V$ is the natural $n$-dimensional 
$\mathbb{F}_{q^\delta}$-vector space of~$\tilde{G}$. In what follows, we will
make use of the corresponding notation introduced in 
\cite[Subsections~$3.1$,~$3.2$,~$3.4$]{HL25}. 

\addtocounter{thm}{1}
\begin{lem}
\label{MinimalPolynomial}
The minimal polynomial of~$t$ has at most three irreducible factors.
\end{lem}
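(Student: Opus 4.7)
The plan is to exploit the semisimplicity of~$t$, the direct-product structure of $\tilde{C}=C_{\tilde{G}}(t)$, and the rank bound $r(\tilde{D})\leq 3$ coming from Lemmas~\ref{YEqualOpZ}(c) and~\ref{TildeCmoduloC}(b).

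Since~$t$ is a $p$-element and $p\neq r$, it is semisimple, so its minimal polynomial $m_t=f_1\cdots f_k$ is a product of distinct monic irreducibles over~$\mathbb{F}_{q^\delta}$. This yields a $t$-stable decomposition $V=\bigoplus_{i=1}^{k}V_i$ with $V_i=\ker f_i(t)$, and correspondingly a direct-product decomposition $\tilde{C}=\prod_{j=1}^{k'}\tilde{C}_j$, whose factors are indexed by the orbits of the natural involution~$*$ on $\{f_1,\ldots,f_k\}$ induced by the Hermitian form (trivial in the linear case, so $k'=k$ there). Each~$\tilde{C}_j$ is a general linear or general unitary group over an extension of~$\mathbb{F}_{q^\delta}$ in which~$t$ is central; the image of~$t$ in~$\tilde{C}_j$ is non-trivial unless the orbit reduces to $\{X-1\}$. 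The block~$\tilde{\bc}$ decomposes accordingly as $\bigotimes_j\tilde{\bc}_j$, with defect group $\tilde{D}=\prod_j\tilde{D}_j$.

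Since $t\in D\leq\tilde{D}$, for each orbit whose image of~$t$ is non-trivial the factor~$\tilde{D}_j$ contains a non-trivial cyclic subgroup, giving $r(\tilde{D}_j)\geq 1$. As at most one orbit reduces to $\{X-1\}$, this yields $r(\tilde{D})\geq k'-1$. If $O_p(Z)\neq 1$, then $O_p(Z)\leq Z(\tilde{G})$ consists of non-trivial scalar matrices of~$\tilde{G}$ whose image in every~$\tilde{C}_j$ is non-trivial, upgrading the bound to $r(\tilde{D})\geq k'$; combined with $r(\tilde{D})\leq 3$ this gives $k'\leq 3$. If $O_p(Z)=1$, then Lemma~\ref{YEqualOpZ}(c) together with the assumption forces $D=\langle t\rangle$ to be cyclic, so $r(D)=1$ and $r(\tilde{D})\leq 2$ by Lemma~\ref{TildeCmoduloC}(b); combined with $r(\tilde{D})\geq k'-1$, this again yields $k'\leq 3$.

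The main obstacle is to pass from the number of orbits~$k'$ to the number of irreducible factors~$k$: in the linear case this is automatic, but in the unitary case a paired orbit $\{f_i,f_i^*\}$ with $f_i\neq f_i^*$ would contribute two factors to~$m_t$ while contributing only one rank to~$\tilde{D}$. To rule this out, I would show that in the unitary case every irreducible factor of~$m_t$ is $*$-self-dual. The eigenvalues of~$t$ are $p$-power roots of unity in~$\bar{\mathbb{F}}_q$; using $p\mid q+1$ together with standard lifting-the-exponent computations for $v_p(q^{2d}-1)$, the multiplicative order of~$q$ modulo~$p^m$ equals $2p^{\max\{0,m-a\}}$ and $q^{p^{m-a}}\equiv-1\pmod{p^m}$. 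A short computation then shows that for every $p^m$-th root of unity $\lambda\in\bar{\mathbb{F}}_q$, the element $\lambda^{-q}$ lies in the Galois orbit of~$\lambda$ over~$\mathbb{F}_{q^2}$. Hence every factor of~$m_t$ is $*$-self-dual, $k=k'$, and the desired bound $k\leq 3$ follows.
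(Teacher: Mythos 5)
Your proof is correct and shares the paper's core idea, namely bounding the number of direct factors of $\tilde{C}$ by the rank $r(\tilde{D})\leq 3$ coming from Lemmas~\ref{YEqualOpZ}(c) and~\ref{TildeCmoduloC}(b). However, there are two places where it diverges from the paper's argument, one for the better and one for the worse.

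On the worse side, your case split on whether $O_p(Z)$ is trivial, together with the $r(\tilde{D})\geq k'-1$ bound that tracks which factors see a non-trivial image of $t$, is unnecessarily delicate. The paper instead observes that, since $p\mid q-\varepsilon$, one has $p\mid |Z(\tilde{C}_j)|$ for \emph{every} factor $\tilde{C}_j$ (including the one on which $t$ acts trivially): in the linear case $Z(\tilde{C}_j)$ has order $q^{d_j}-1$ and $p\mid q-1\mid q^{d_j}-1$; in the unitary case it has order $q^{d_j}+1$ with $d_j$ a power of the odd prime $p$, so $q^{d_j}\equiv -1\pmod p$. Since $\tilde{D}_j$ is a defect group, $O_p(Z(\tilde{C}_j))\leq\tilde{D}_j$, so every $\tilde{D}_j$ is non-trivial and $h = r(O_p(Z(\tilde{C})))\leq r(\tilde{D})\leq 3$ in one step, with no case distinction.

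On the better side, you correctly identify and address the genuine subtlety that the paper quietly glosses over at this point: in the unitary case, the direct factors of $\tilde{C}$ are a priori indexed by orbits of the involution $f\mapsto f^{*}$ on irreducible factors, not by irreducible factors themselves. Your lifting-the-exponent computation showing that $\lambda^{-q}$ is always a Galois conjugate of $\lambda$ over $\mathbb{F}_{q^{2}}$ when $\lambda$ has $p$-power order and $p\mid q+1$ is exactly what makes each irreducible factor $*$-self-dual, hence $k=k'$. The paper relies on the same fact but takes it from Part~II (it resurfaces as Lemma~\ref{DeterminantLem}(b), with a citation to \cite[Lemma~$3.2.2$]{HL25}). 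Making this explicit, as you do, is a sound and even clarifying choice, though it lengthens the proof.

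Net assessment: correct, with a cleaner route available for the rank lower bound, and a welcome explicit treatment of the self-duality issue that the paper defers elsewhere.
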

\begin{proof}
Let~$h$ denote the number of irreducible factors of the minimal polynomial 
of~$t$ acting on~$V$. By the primary decomposition of~$V$ with respect to~$t$, 
we get $\tilde{C} = C_{\tilde{G}}( t ) = 
\tilde{C}_{1} \times \cdots \times \tilde{C}_{h}$, where
each~$\tilde{C}_i$ is a general linear or unitary group, possibly over an
extension field of $\mathbb{F}_{q^{\delta}}$. As $p \mid q - \varepsilon$, we
have $p \mid |Z( \tilde{C}_i )|$ for each $1 \leq i \leq h$.

By Lemma~\ref{MinimalGeneratingSet}(b) we have $r( O_p( Z( \tilde{C} ) ) = h$.
As~$\tilde{D}$ is a defect group of some $p$-block of~$\tilde{C}$, we have 
$O_p( Z( \tilde{C} ) ) \leq \tilde{D}$.
It follows that $h = r( O_p( Z( \tilde{C} ) ) \leq r( \tilde{D} ) 
\leq 3$, the first inequality arising from Lemma~\ref{MinimalGeneratingSet}(c),
the second one from Lemma~\ref{TildeCmoduloC}(b).
\end{proof}

\begin{notn}
\label{HypoIV}
{\rm
(i) Let us write $\Delta_1, \ldots , \Delta_h$ with $h \in \{ 1, 2, 3 \}$ for
the monic, irreducible factors of the minimal polynomial of~$t$.

(ii)
Fix $j \in \{ 1, \ldots , h \}$.
Let $\xi_j \in \mathbb{F}$ denote a root of~$\Delta_j$. Let~$d_j$ denote the 
degree of~$\Delta_j$, and~$n_j'$ its multiplicity in the characteristic 
polynomial of~$t$. Write $n_j' = m_jp^{b_j}$ with non-negative integers $b_j, 
m_j$ and $p \nmid m_j$.

(iii) We choose the notation in such a way that
$d_1 \geq \cdots \geq d_{h}$. If $d_1 = 1$ and $h \geq 2$, we assume
$|\xi_j| \geq |\xi_{j+1}|$ for $1 \leq j < h$.

(iv) Put $V_j := \ker( \Delta_j( t ) )$. Then~$V_j$ is $\tilde{C}$-invariant. Let 
$t_j$ denote the restriction of~$t$ to~$V_j$. (Depending on the context, we 
view~$t_j$ as an automorphism of~$V_j$ or of~$V$.) Then the minimal polynomial 
of~$t_j$ acting on~$V_j$ equals~$\Delta_j$. Moreover, 
$\dim_{\mathbb{F}_{q^\delta}}( V_j ) = n_j$ with $n_j := n_j'd_j = m_jd_jp^{b_j}$.
}\hfill{\textsquare}
\end{notn}

We record some properties of the quantities introduced above.
\begin{lem}
\label{DeterminantLem}
Fix $j$ with $1 \leq j \leq h$. Then the following statements hold.

{\rm (a)} We have $d_j = p^{a_j}$ with a 
non-negative integer~$a_j$; in particular, $n_j = m_j p^{a_j}$ if $b_j = 0$.
Also, $|\xi_j| = p^{a + a_j}$, if $a_j > 0$, and
$|\xi_j| \mid p^a$, if $a_j = 0$. 

{\rm (b)} If $\varepsilon = -1$, we have $\Delta_j = \Delta_j^\dagger$, i.e.\ 
$\xi_j^{-q}$ is a root of~$\Delta_j$. 

{\rm (c)} The highest power of~$p$ dividing 
$(q^{\delta {d_j}} - 1)/(q^{\delta} - 1)$ equals $p^{a_j}$, and we 
define the positive integer $m_j'$ by 
$(q^{\delta {d_j}} - 1)/(q^{\delta} - 1) = m_j'p^{a_j}$; then 
$p \nmid m_j'$. 

{\rm (d)} We have  
$\det(t_j) = \xi_j^{m_jm_j'p^{a_j+b_j}}$.
In particular, $|\det(t_j)| = p^{a-b_j}$ if $a_j > 0$ and $b_j \leq a$.

{\rm (e)} Let~$\kappa_j$ denote the restriction of~$\kappa$ to~$V_j$. Then~$\kappa_j$ 
is a non-degenerate hermitian form if $\varepsilon = -1$. We view 
$I(V_j,\kappa_j)$ as a subgroup of~$\tilde{G}$ in the natural way. Put 
$\tilde{G}_j := {I(V_j,\kappa_j)}$ and 
$\tilde{C}_{j} := C_{\tilde{G}_j}( t_j )$. Then 
$\tilde{G}_j \cong \GL^\varepsilon_{n_j}( q )$ and 
$\tilde{C}_{j} \cong \GL^\varepsilon_{n_j'}( q^{p^{a_j}} )$. 

In particular,
$Z( \tilde{C}_{j} )$ is cyclic and $|Z( \tilde{C}_{j} )|_p = p^{a+a_j}$.
\end{lem}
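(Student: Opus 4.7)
The plan is to prove the five assertions in order, noting that parts (a)--(d) are arithmetic consequences of $t$ being a $p$-element with $p \mid q - \varepsilon$, while (e) carries the geometric work.

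Parts (a) and (c) rest on the observation that $t$ being a $p$-element forces $|\xi_j|$ to be a $p$-power, and $d_j$ equals the multiplicative order of $q^\delta$ modulo $|\xi_j|$. In both cases $\varepsilon = \pm 1$ we have $v_p(q^\delta - 1) = a$ (in the unitary case because $q^2 - 1 = (q-1)(q+1)$ and $p \nmid q - 1$). By the lifting-the-exponent (LTE) lemma, the order of $q^\delta$ modulo $p^{a+i}$ is exactly $p^i$ for every $i \geq 0$. This forces $|\xi_j| \mid p^a$ when $\xi_j \in \mathbb{F}_{q^\delta}$ (so $a_j = 0$), and $|\xi_j| = p^{a + a_j}$ with $d_j = p^{a_j}$ otherwise, giving (a). The same LTE computation gives $v_p(q^{\delta d_j} - 1) = a + a_j$, hence $v_p\bigl((q^{\delta d_j} - 1)/(q^\delta - 1)\bigr) = a_j$, which is (c). For (b), I need to show $\xi_j^{-q}$ is a Galois conjugate of $\xi_j$ over $\mathbb{F}_{q^2}$: if $a_j = 0$ this reduces to $\xi_j^{q+1} = 1$, immediate from $|\xi_j| \mid p^a \mid q + 1$; if $a_j \geq 1$, then $-q \equiv 1 \pmod{p^a}$, so LTE gives $(-q)^{p^{a_j}} \equiv 1 \pmod{p^{a + a_j}}$, whence $q^{p^{a_j}} \equiv -1$ (using that $p^{a_j}$ is odd since $p$ is odd). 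Therefore $-q \equiv q^{p^{a_j} + 1} \pmod{|\xi_j|}$, and $k := (p^{a_j}+1)/2$ is an integer in $\{0, \ldots, d_j - 1\}$ with $\xi_j^{-q} = \xi_j^{q^{2k}}$.

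For (d), since $t$ is semisimple in the linear algebra sense ($p \nmid r$), the characteristic polynomial of $t_j$ on $V_j$ is $\Delta_j^{n_j'}$, so $\det(t_j)$ is the product of the $d_j$ Galois conjugates of $\xi_j$ over $\mathbb{F}_{q^\delta}$, each raised to the power $n_j'$. Summing the exponent and invoking (c) gives
$$\det(t_j) = \xi_j^{n_j'(1 + q^\delta + \cdots + q^{\delta(d_j - 1)})} = \xi_j^{n_j' (q^{\delta d_j} - 1)/(q^\delta - 1)} = \xi_j^{m_j m_j' p^{a_j + b_j}};$$
the ``in particular'' clause then follows by reading off the order of this power from $|\xi_j| = p^{a + a_j}$ and $p \nmid m_j m_j'$.

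The main obstacle is (e), requiring geometric and module-theoretic bookkeeping. I would first establish that the primary subspaces $V_1, \ldots, V_h$ are mutually orthogonal with respect to $\kappa$: in the unitary case, for $v \in V_j$ and $w \in V_i$ with $i \neq j$, the fact that $t^{-1}$ is the adjoint of $t$ with respect to $\kappa$, combined with (b) (which ensures $\Delta_j^\dagger = \Delta_j$ acts invertibly on $V_i$), forces $\kappa(v, w) = 0$. This yields the non-degeneracy of $\kappa_j$ and identifies $\tilde{G}_j$ with the isometry group of a non-degenerate $\varepsilon$-form on an $n_j$-dimensional $\mathbb{F}_{q^\delta}$-space, hence $\tilde{G}_j \cong \GL_{n_j}^\varepsilon(q)$. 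For the centralizer, $V_j$ is a free module of rank $n_j'$ over the field $\mathbb{F}_{q^\delta}[t_j]/(\Delta_j) \cong \mathbb{F}_{q^{\delta p^{a_j}}}$, so the centralizer of $t_j$ in $\End_{\mathbb{F}_{q^\delta}}(V_j)$ equals $M_{n_j'}(\mathbb{F}_{q^{\delta p^{a_j}}})$. Intersecting with $\tilde{G}_j$ yields $\GL_{n_j'}(q^{p^{a_j}})$ in the linear case directly; in the unitary case, (b) ensures that $\kappa_j$ induces a non-degenerate hermitian form on $V_j$ viewed over $\mathbb{F}_{q^{2 p^{a_j}}}$ with involution $x \mapsto x^{q^{p^{a_j}}}$, and $\tilde{C}_j$ is its isometry group $\GU_{n_j'}(q^{p^{a_j}})$. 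In both cases $Z(\tilde{C}_j)$ is cyclic, with $p$-part of order $p^{a + a_j}$ by $v_p(q^{p^{a_j}} - \varepsilon) = a + a_j$ via one final application of LTE.
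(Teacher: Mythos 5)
Your proof is correct in substance and in detail. Parts (a)--(d) are standard consequences of the lifting-the-exponent lemma together with the elementary Galois theory of $\mathbb{F}_{q^{\delta}}$, and your computations (order of $q^{\delta}$ modulo $p$-powers, $v_p$ of cyclotomic quotients, the determinant as a power of $\xi_j$ with exponent the sum of Galois exponents times $n_j'$, the self-conjugacy argument for~(b)) all check out; the deduction of the two \emph{in-particular} clauses in~(a) and~(d) is immediate from what you wrote. In~(e), your orthogonality argument via the adjoint of $\Delta_j(t)$ acting invertibly on $V_i$ is the right one, though it is worth saying explicitly that the polynomial obtained after taking adjoints has the \emph{same roots} as $\Delta_j^{\dagger}=\Delta_j$ and is therefore coprime to $\Delta_i$ for $i\neq j$; that is the precise role played by~(b). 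The identification of $\tilde{C}_j$ with $\GL^\varepsilon_{n_j'}(q^{p^{a_j}})$ via the $\mathbb{F}_{q^{\delta p^{a_j}}}$-module structure on $V_j$, and in the unitary case the induced Hermitian form over $\mathbb{F}_{q^{2p^{a_j}}}$ with involution $x\mapsto x^{q^{p^{a_j}}}$, is the standard Fong--Srinivasan-type description. The paper itself proves nothing here: it cites Lemmas~4.1.1 and~3.2.2 of Part~II of the series for~(a),~(b),~(d), calls~(c) standard, and observes only that non-degeneracy of $\kappa_j$ in~(e) follows from~(b). What you have supplied is a self-contained rendering of the content behind those citations, and it matches the standard approach one would expect those references to use.
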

\begin{proof}
(a) See \cite[Lemma~$4.1.1$(c)]{HL25}.

(b) This is \cite[Lemma~$3.2.2$]{HL25}.

(c) This is standard.

(d) See \cite[Lemma~$4.1.1$(e)]{HL25}.

(e) The fact that~$\kappa_j$ is non-degenerate if $\varepsilon = -1$ follows
from~(b).
\end{proof}
Recall from Notation~\ref{HypoIII}(iii) that $\tilde{D}$ denotes a defect group 
of the block $\tilde{\bc}$ of
$\tilde{C} = \tilde{C}_1 \times \cdots \times \tilde{C}_h$. There is a 
decomposition of~$\bc$ into a tensor products of blocks of $\tilde{C}_j$ with
defect groups $\tilde{D}_j \leq \tilde{C}_j$ for $1 \leq j \leq h$ such that 
$\tilde{D} \cong \tilde{D}_1 \times \cdots \times \tilde{D}_h$.

\begin{lem}
\label{AnalysingTheDeltas}
Assume the terminology introduced in 
{\rm Notations~\ref{HypoI}, \ref{HypoII}, \ref{HypoIII}, \ref{HypoIV}} and 
{\rm Lemma \ref{DeterminantLem}}.

{\rm (a)} We have $|t| \leq p^{a + a_1}$ with equality if $a_1 > 0$. 

{\rm (b)} We have
\begin{equation}
\label{EstimatingTildeD}
|\tilde{D}| \leq |D|p^a \leq |t|p^{2a} \leq p^{3a + a_1}.
\end{equation}
If $|\tilde{D}| = p^{3a + a_1}$, then $|D| = |t|p^a$ and $|t| = p^{a+a_1}$
(even if $a_1 = 0$).

{\rm (c)} Suppose that the minimal polynomial of~$t$ is irreducible and that
$D \not\leq Z$. 
Then~$\tilde{D}$ is cyclic, so that~$D$ is cyclic, and~$\bc$ is covered by a 
cyclic block. Moreover, $a_1 > 0$ and $b_1 = a$. In particular, 
$n = m_1 p^{a+a_1}$ with $p \nmid m_1$.

{\rm (d)} Suppose that the minimal polynomial of~$t$ has exactly 
two irreducible factors. Then $\tilde{D}_1$ and $\tilde{D}_2$ are cyclic and
$b_1 = b_2 = 0$. Moreover, the following statements hold.

\begin{itemize} 
\item[(i)] If $a_1 > a_2$, then $\langle t \rangle \cap O_p(Z) = \{ 1 \}$ and
$Y = O_p(Z)$. In particular, $c' = 0$. Moreover, $a_2 = c$ and 
$|D| = p^{a + a_1 + c}$.

\item[(ii)] If $a_1 = a_2 > 0$, then $|D| = p^{a + 2a_1}$,
$c' = c - a_1 < a$ and $Y = O_p(Z)$. 

\item[(iii)] If $a_1 = a_2 = 0$, then~$D$ is cyclic of order~$p^a$. In particular,
$D = \langle t \rangle$ and $|t| = p^a$.

\item[(iv)] If $a_2 = 0$, then $D$ is cyclic.

\item[(v)] We have $c \geq a_2$.

\item[(vi)] Suppose that $a_1 = a_2 = 0$ and that $c' < c$. 
Then $n_1 \neq n_2$.

\item[(vii)] We have $|t_j| = p^{a+a_j}$ for $j = 1, 2$.
\end{itemize}

{\rm (e)} Suppose that the minimal polynomial of~$t$ has exactly
three irreducible factors. Then $b_1 = a_2 = b_2 = a_3 = b_3 = 0$ and 
$|\tilde{D}| = p^{3a + a_1}$. In particular, $p^a \mid n$, $Y = O_p(Z)$
and $c' = 0$.
Thus $|\bar{D}| = |t| = p^{a+a_1}$.

If $a_1 > 0$, then $n_2 \neq n_3$ and $N_{\tilde{G}}( \tilde{C} ) = 
N_{\tilde{G}_1}( \tilde{C}_{1} ) \times \tilde{G}_2 \times \tilde{G}_3$.

{\rm (f)} In all cases, $|\tilde{D}| = |D|p^a$, $|t| = p^{a+a_1}$ and 
$|\bar{D}| = p^{a+a_1-c'}$.

{\rm (g)} We have $C_{\tilde{G}}( \tilde{D} ) \leq C_{\tilde{G}}( D )$ and
$N_{\tilde{G}}( \tilde{D} ) \leq N_{\tilde{G}}( D )$, with equality in either 
instance, if the minimal polynomial of~$t$ is reducible.
\end{lem}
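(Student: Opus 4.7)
The strategy is to work through parts (a)--(g) sequentially, leveraging the primary decomposition $\tilde{C} = \tilde{C}_1 \times \cdots \times \tilde{C}_h$ of Notation~\ref{HypoIV} together with the corresponding factorisation $\tilde{D} = \tilde{D}_1 \times \cdots \times \tilde{D}_h$. Parts (a) and (b) are essentially bookkeeping. For (a), note that $t$ is a $p$-element of $\tilde{G}$ in non-defining characteristic, hence semisimple, so $|t| = \max_j |t_j| = \max_j |\xi_j|$; the ordering $d_1 \geq \cdots \geq d_h$ forces $a_1 \geq a_j$ for all $j$, and Lemma~\ref{DeterminantLem}(a) yields the claim, with equality compelled when $a_1 > 0$ since $|t_1| = p^{a+a_1}$. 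For (b), chain $|\tilde{D}| \leq |D|p^a$ from Lemma~\ref{TildeCmoduloC}(b), $|D| \leq |t||O_p(Z)| \leq |t|p^a$ from Lemma~\ref{YEqualOpZ}(c), and $|t| \leq p^{a+a_1}$ from~(a); saturating the chain forces $|t| = p^{a+a_1}$ even when $a_1 = 0$.

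For (c), the hypothesis $D \not\leq Z$ rules out $a_1 = 0$: otherwise $\Delta_1 = X - \xi_1$ makes $t = \xi_1 I$ a scalar with $\xi_1^n = 1$, whence $t \in Z$ and $D = \langle t, O_p(Z)\rangle \leq Z$. Given $a_1 > 0$, observe that $t \in Z(\tilde{C}_1) = Z(\tilde{C})$ and $O_p(Z) \leq Z(\tilde{G}) \leq Z(\tilde{C}_1)$, so $D \leq O_p(Z(\tilde{C}_1))$, cyclic of order $p^{a+a_1}$ by Lemma~\ref{DeterminantLem}(e). Since $|t| = p^{a+a_1}$ by~(a), we obtain $\langle t\rangle = D = O_p(Z(\tilde{C}_1))$, so $D$ is cyclic. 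To prove $\tilde{D}$ cyclic, I would promote $\tilde{D}$ to a defect group of a block of $\tilde{G}$ by taking the Brauer correspondent of $\tilde{\bc}$ in $C_{\tilde{C}}(\tilde{D}) = C_{\tilde{G}}(\tilde{D})$ and invoking Lemma~\ref{BrauerPairsLem}, then exclude $r(\tilde{D}) = 2$ by combining Lemma~\ref{RankTwoDefectGroup} with the bound $|\tilde{D}| \leq p^{2a+a_1}$ and the Fong--Srinivasan description of abelian defect $p$-blocks of $\tilde{G} = \GL^\varepsilon_n(q)$. The assertion $b_1 = a$, equivalent to $n = m_1 p^{a+a_1}$, then follows from the Fong--Srinivasan classification of $p$-blocks of $\tilde{C}_1 \cong \GL^\varepsilon_{n_1'}(q^{p^{a_1}})$ with cyclic defect realising the full $p$-part of $Z(\tilde{C}_1)$, which pins down $n_1'$ to have $p$-part exactly $p^a$.

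Parts (d) and (e) proceed by applying the same scheme factor by factor. For (d) with $h = 2$, $r(\tilde{D}) = r(\tilde{D}_1) + r(\tilde{D}_2) \leq 3$ by Lemma~\ref{MinimalGeneratingSet}(b) and non-triviality of each $\tilde{D}_j$ forces $r(\tilde{D}_j) \leq 2$; the argument of~(c), applied within each $\tilde{G}_j$, yields cyclicity and $b_j = 0$. Subcases~(i)--(iii) then follow by direct counting against $|O_p(Z(\tilde{C}))| = p^{2a+a_1+a_2}$ together with the observation that $t^k$ is a scalar in $\tilde{G}$ precisely when $p^{a_1} \mid k$ and the compatibility $\xi_1^k = \xi_2^k$ holds; (iv) then combines (i) and (iii). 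For~(v), $|\tilde{D}| \geq p^{2a+a_1+a_2}$ combined with $|\tilde{D}| \leq p^{3a+a_1}$ from~(b) gives $a_2 \leq a$, and $p^{a_2} \mid n = n_1 + n_2$ (using $a_1 \geq a_2$) gives $a_2 \leq b$, hence $a_2 \leq c$; (vi) is a determinant check via Lemma~\ref{DeterminantLem}(d), and (vii) is immediate from $|t_j| = |\xi_j|$. For (e) with $h = 3$, the rank constraint saturates, each $\tilde{D}_j$ is forced cyclic, and equality throughout~(b) pins down $a_2 = a_3 = 0$, $b_1 = b_2 = b_3 = 0$, $Y = O_p(Z)$ and $c' = 0$; the structure of $N_{\tilde{G}}(\tilde{C})$ follows from its action on the eigenspace decomposition $V = V_1 \oplus V_2 \oplus V_3$ once $n_2 \neq n_3$ is secured by a determinant/parity argument. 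Finally, (f) reads off the key cardinalities uniformly from (c)--(e), and (g) follows because in the reducible case $\tilde{D} = O_p(Z(\tilde{C}))$, whence $C_{\tilde{G}}(\tilde{D}) \supseteq \tilde{C} = C_{\tilde{G}}(D)$ gives one equality, while the other reduces to observing that $N_{\tilde{G}}(\tilde{D})$ acts on $\tilde{D}$ through Galois-compatible automorphisms that preserve $D$. The principal difficulty throughout is the step, first used in~(c) and recurring in~(d), of deducing cyclicity of individual $\tilde{D}_j$ from the rank bound alone, which genuinely requires the Fong--Srinivasan structural description of abelian $p$-blocks of the general linear and unitary groups, rather than purely group-theoretic information about $t$ and $Z$.
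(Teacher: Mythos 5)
Your overall strategy is the same as the paper's (rank constraints on $\tilde{D}$ via Fong--Srinivasan, primary decomposition, determinant arguments), but there is a concrete flaw in (c) and a significant gap in (d).

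In (c), you propose to first promote $\tilde{D}$ to a defect group of a block of the ambient $\tilde{G}=\GL^\varepsilon_n(q)$ (via Lemma~\ref{BrauerPairsLem}) and then apply Lemma~\ref{RankTwoDefectGroup} there. But for $\tilde{G}$ that lemma only yields $p^{2a}\mid|\tilde{D}|$ whenever $r(\tilde{D})=2$, and this is perfectly compatible with your upper bound $|\tilde{D}|\leq p^{2a+a_1}$ when $a_1>0$ (e.g.\ $\mathbb{Z}/p^{a+a_1}\times\mathbb{Z}/p^a$ satisfies both). So there is no contradiction, and the hand-wave towards ``the Fong--Srinivasan description of abelian defect $p$-blocks of $\tilde{G}$'' does not close it. The paper instead applies Lemma~\ref{RankTwoDefectGroup} \emph{inside} $\tilde{C}_1\cong\GL^\varepsilon_{n_1'}(q^{p^{a_1}})$, where by Lemma~\ref{DeterminantLem}(e) the relevant exponent is $a+a_1$ rather than $a$; a rank-$2$ defect group of $\tilde{C}_1$ then has order $\geq p^{2(a+a_1)}>p^{2a+a_1}$, and the contradiction is immediate. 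This is the step you need, and going up to $\tilde{G}$ first throws away exactly the extra factor $p^{a_1}$ that makes it work.

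In (d), the assertion that ``the argument of~(c), applied within each $\tilde{G}_j$, yields cyclicity and $b_j=0$'' is too coarse to be a proof. Cyclicity of each $\tilde{D}_j$ does follow from the rank bound together with Lemma~\ref{RankTwoDefectGroup} applied to $\tilde{C}_j$, but the conclusion $b_1=b_2=0$ is not a corollary of that; in the paper it requires separate case analyses on $a_1>a_2>0$, $a_1>a_2=0$, $a_1=a_2>0$, $a_1=a_2=0$, each resting on explicit determinant computations via Lemma~\ref{DeterminantLem}(d) and the identity $|D|=p^{a+a_1+b_1+a_2+b_2}$ obtained from Lemma~\ref{TildeCmoduloC}(d). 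For instance, the subcase $a_1>a_2=0$ in the paper needs the comparison $|\det(t_1)|=|\det(t_2)|$ and a growth estimate on $|\xi_2|$ to rule out $b_1>0$; none of this is visible in your sketch. Similarly, in the first part of (d) the paper must first rule out a non-cyclic $\tilde{D}_j$ by a counting argument using $D=G\cap O_p(Z(\tilde C))$, which is different from what happens in (c). Parts (a), (b), (e), (f), (g) of your proposal are, modulo the same imprecision, in the spirit of the paper's proof; but (c) as written is wrong, and (d) needs the determinant case analysis spelled out before it can be considered a proof.
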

\begin{proof}
(a) Lemma~\ref{DeterminantLem}(a) implies that $|t_1| \mid p^{a + a_1}$ with 
equality if $a_1 > 0$, and $|t_j| \mid p^{a + a_1}$ for $1 \leq j \leq h$. 

(b) The first inequality 
of~(\ref{EstimatingTildeD}) follows from Lemma~\ref{TildeCmoduloC}(b), the
second one from $D = \langle t, O_p(Z) \rangle$ and $|O_p(Z)| = p^c \leq p^a$, 
and the last one from~(a).

The last assertions are clear from~(\ref{EstimatingTildeD}).

(c) Here, $D = G \cap O_p( Z ( \tilde{C}_1 ) )$. If $a_1 = 0$, then
$\tilde{C}_1 = \tilde{G}$ and $D \leq Z$, a case we have excluded. Thus
$a_1 > 0$ and~$D$ is cyclic of order $p^{a+a_1}$ by~(a) and 
Lemma~\ref{YEqualOpZ}(f). If $\tilde{D}_1$ is not cyclic, then 
$p^{a+a_1} = |D| \geq |\tilde{D}|/p^a \geq p^{a+2a_1}$, where the latter 
estimate follows from Lemma~\ref{RankTwoDefectGroup}, applied to~$\tilde{C}_1$. 
This contradiction shows that~$\tilde{D}$ is cyclic. In particular, 
$|\tilde{D}| = p^{a+a_1+b_1}$, and~$\tilde{D}$ is the Sylow $p$-subgroup of a 
maximal torus of~$\tilde{G}$; see \cite[Corollary~$3.6.2$]{HL25}. 
Lemma~\ref{TildeCmoduloC}(d) implies that 
$p^{a+a_1} = |D| = |\tilde{D}|/p^a = p^{a_1+b_1}$, and thus $b_1 = a$ as
claimed.

(d) Recall that $3 \geq r( \tilde{D} ) = r( \tilde{D}_1 ) + r( \tilde{D}_2 )$.
If $\tilde{D}_j$ is non-cyclic, then $|\tilde{D}_j| \geq p^{2(a+a_j)}$
by Lemma~\ref{RankTwoDefectGroup}, applied to~$\tilde{C}_j$ for $j = 1, 2$.

Suppose first that $\tilde{D}_1$ is non-cyclic. Then~$\tilde{D}_2$ is cyclic, 
and, using~(\ref{EstimatingTildeD}) and \cite[Corollary~$3.6.2$]{HL25}, we get
$$p^{3a + a_1} \geq |\tilde{D}| \geq p^{2(a+a_1) + a + a_2 + b_2}
= p^{3a + 2a_1+  a_2 + b_2},$$
which implies $a_1 = a_2 = b_2 = 0$ and also that $|\tilde{D}| = p^{3a}$.
Now $D = G \cap O_p( Z( \tilde{C} ) )$. Since $a_1 = a_2 = 0$, we have
$|O_p( Z( \tilde{C} ) )| = p^{2a}$. Clearly, $O_p( Z( \tilde{C} ) ) \not\leq G$
since $b_2 = 0$, i.e.\ $p \nmid n_2$. Hence $|D| < p^{2a}$, contradicting
$|D| \geq |\tilde{D}|/p^a$.

Suppose then that $\tilde{D}_2$ is non-cyclic. 
Then
$$p^{3a + a_1} \geq |\tilde{D}| \geq p^{a + a_1 + b_1 + 2(a+a_2)}
= p^{3a + a_1 + b_1 + 2 a_2},$$
which implies $b_1 = a_2 = 0$ and also that $|\tilde{D}| = p^{3a + a_1}$.
If $a_1 = 0$, an analogous argument as above leads to a contradiction. So assume
that $a_1 > 0$ in the following. By~(b) and Lemma~\ref{YEqualOpZ}(e), we obtain
$p^a \mid n$. We claim that $b_2 = 0$, i.e.\ $p \nmid n_2'$. Indeed,
$|\det(t_1)| = p^a$ by Lemma~\ref{DeterminantLem}(d), and 
$\det(t_2) = \xi_2^{m_2m_2'p^{b_2}} = \xi_2^{n_2'm_2'}$, as $a_2 = 0$. From 
$\det( t_1 ) \det(t_2) = \det(t) = 1$, we conclude $p^a = |\det(t_1)| 
= |\det(t_2)|$, and thus $p \nmid n_2'$, which is our claim. Now 
$n = n_1'p^{a_1} + n_2'$ and $p \nmid n_2'$. As $a_1 > 0$, we conclude that 
$p \nmid n$, a contradiction.

We next show that $b_1 = b_2 = 0$, assuming first that $a_1 > 0$. Then 
$|t| = p^{a+a_1}$ by~(a). Now $D = \langle t, O_p(Z) \rangle$ and 
$|O_p(Z)| = p^c$, so that $|D| \leq |t||O_p(Z)| = p^{a+a_1+c}$.
Since $\tilde{D}_1$ and $\tilde{D}_2$ are cyclic, we get $|\tilde{D}| =
p^{2a + a_1 + b_1 + a_2 + b_2}$. As~$\tilde{D}$ is a Sylow $p$-subgroup
of~$\tilde{G}$ by \cite[Corollary~$3.6.2$]{HL25}, 
Lemma~\ref{TildeCmoduloC}(d) implies that
\begin{equation}
\label{FirstDEstimate}
|D| = p^{a + a_1 + b_1 + a_2 + b_2},
\end{equation}
and thus
\begin{equation}
\label{cEstimate}
c \geq b_1 + a_2 + b_2.
\end{equation}
Since $a \geq c \geq b_1 + a_2 + b_2$, we obtain $a \geq b_j$ for $j = 1, 2$.
If also $a_2 > 0$, we have $|\det(t_j)| = q^{a-b_j}$ for $j = 1, 2$ by
Lemma~\ref{DeterminantLem}(d). From $|\det(t_1)| = |\det(t_2)|$, we obtain 
$b_1 = b_2$, if $a_1, a_2 > 0$. 

Assume now that $a_1 > a_2 > 0$. Then 
$n = m_1 p^{a_1 + b_1} + m_2 p^{a_2 + b_2}$. As $a_1 + b_1 > a_2 + b_1
= a_2 + b_2$, we obtain $a_2 + b_2 = b \geq c \geq b_1 + a_2 + b_2$. It follows
that $b_2 = b_1 = 0$.

Next assume that $a_1 > a_2 = 0$. Then $|\det(t_1)| = p^{a-b_1}$ by
Lemma \ref{DeterminantLem}(d). Since $\det(t_2) = \xi_2^{m_2m_2'p^{b_2}}$ and 
$|\xi_2| \mid p^a$, it follows that $b_2 = 0$ if $b_1 = 0$. Now 
$n = m_1 p^{a_1 + b_1} + m_2 p^{b_2}$. If $a_1 + b_1 > b_2$, then 
$b_2 = b \geq c \geq b_1 + b_2$ and hence $b_1 = 0$. As noticed above, this 
implies $b_2 = 0$.  Otherwise, $a_1 + b_1 \leq b_2$. If $a = b_1$, we obtain 
$b_2 = 0$ from $c \geq b_1 + b_2 = a + b_2 \geq a \geq c$. But then 
$p \nmid n$, contradicting $c \geq b_1 + b_2 = a > 0$. Hence $a \neq b_1$, 
i.e.\ $|\det(t_1)| = p^{a-b_1} > 1$. Writing
$|\xi_2| = p^e$, we obtain $a - b_1 = e - b_2$ from $|\det(t_1)| = |\det(t_2)|$. 
It follows that $a - b_1 = e - b_2 \leq e - a_1 - b_1$, and so
$e \geq a + a_1 > a$, a contradiction.

Finally assume that $a_1 = a_2 > 0$. As $D = G \cap O_p( Z ( \tilde{C} ) )$ and 
$O_p( Z ( \tilde{C} ) ) = \langle t_1 \rangle \times \langle t_2 \rangle$ we get
$D \cong \{ (t_1^i, t_2^j) \mid \det( t_1^j ) = \det( t_2^i )^{-1} \}$.
Thus $|D| = p^{a + a_1 + a_2 + b_1}$, as the kernel of the map
$\det \colon \langle t_2 \rangle \rightarrow \mathbb{F}^*$ has order
$p^{a_2 + b_2}$. From $p^{a + a_1 + a_2 + b_2} = |D| \geq |\tilde{D}|/p^a =
p^{a + a_1 + b_1 + a_2 + b_2}$ we obtain $b_1 = 0$, and thus $b_2 = 0$.

Let us now assume that $a_1 = a_ 2 = 0$. Simultaneously to the proof
hat of $b_1 = b_2 = 0$, we also prove statement~(iii).
Choose the notation such that $b_1 \geq b_2$. Here, $O_p( Z ( \tilde{C} ) ) = 
\langle t_1' \rangle \times \langle t_2' \rangle$ with $|t_j'| = p^a$ for 
$j = 1, 2$. Since $\det( t_1' )$ and $\det( t_2' )$ lie in the same subgroup of 
$\mathbb{F}^*$ and since $b_1 \geq b_2$, we get 
$|D| = |G \cap O_p( Z ( \tilde{C} ) )| = p^{a+b_2}$. On the other hand 
$|D| \geq |\tilde{D}|/p^a = p^{a+b_1+b_2}$, which implies that $b_1 = b_2 = 0$.
From $p \nmid n_1n_2$, we find $D = G \cap O_p( Z( \tilde{C} ) ) \cong
\langle t_1' \rangle$. This yields~(iii).

Let us finally prove the statements (i)--(vii).

We have already proved~(iii). Clearly, (v) follows from (i) and (ii). By~(iii), 
it suffices to prove~(iv) in the 
situation of~(i). In that case, $c = a_2 = 0$, so that $p \nmid |Z|$. Hence~$Y$ 
is trivial and so $D = \bar{D}$ is cyclic. Let us now prove~(i),~(ii).
In this situation, $|t| = p^{a+a_1}$ and $|D| = p^{a+a_1+a_2}$ 
by~(\ref{FirstDEstimate}). Moreover, $a_2 \leq c$ by~(\ref{cEstimate}).
Assume now that $a_1 > a_2$. As 
$n = m_1p^{a_1} + m_2p^{a_2}$ with $p \nmid m_1m_2$ and $a_1 > a_2$, we
get $b = a_2$. Since $c \leq b = a_2 \leq c$, we find $a_2 = c$.
It follows that
$$p^{a + a_1 - c'} = |\bar{D}| = |D/Y| \geq |D/O_p(Z)| = p^{a+a_1} 
\geq p^{a + a_1 - c'}.$$ 
Hence $c' = 0$, $Y = O_p(Z)$ and $\langle t \rangle \cap O_p( Z ) = \{ 1 \}$.
This proves~(i). Now assume that $a_1 = a_2 > 0$. Here, $|D| = p^{a+2a_1}$ 
by~(\ref{FirstDEstimate}). Since $|t| = p^{a+a_1}$ and 
$|\langle t \rangle \cap Y| = p^{c'}$ by definition, we get
$$p^{a + a_1 - c'} = |\bar{D}| = |D/Y|.$$ 
In particular, $|Y| = p^{a_1+c'}$. Moreover, 
$|\langle t \rangle \cap O_p( Z )| = p^{c-a_1}$, since 
$D = \langle t, O_p( Z ) \rangle$. Assume that 
$|Y| \leq p^{c-a_1}$. Then $Y \leq \langle t \rangle \cap O_p( Z )$, which 
implies $Y = \langle t \rangle \cap Y$. It follows that
$$p^{a_1 + c'} = |Y| = |\langle t \rangle \cap Y| = p^{c'},$$
which implies $a_1 = 0$, a contradiction. Hence $|Y| > p^{c-a_1}$. Then
$\langle t \rangle \cap O_p( Z ) \leq Y$, which implies 
$\langle t \rangle \cap Y = \langle t \rangle \cap O_p( Z )$. 
It follows
that $p^{c'} = p^{c - a_1}$, and so $c' = c -a_1$. Moreover,
$$|Y| = p^{a_1 + c'} = p^{a_1 + c - a_1} = p^c,$$
and thus $Y = O_p( Z )$. This completes the proof of~(ii).
To prove~(vi), assume that $n_1 = n_2$. Then $n = 2n_1$ and $p \nmid n_1$. Hence 
$c = 0$, contradicting $0 \leq c' < c$.
If $a_j > 0$ for $j = 1, 2$, then $|t_j| = p^{a+a_j}$ by 
Lemma~\ref{DeterminantLem}(a). If $a_1 > a_2 = 0$, then $|\det(t_1)| = p^a$ by 
Lemma~\ref{DeterminantLem}(d) and thus $|t_2| = p^a$, since $\det(t_1t_2) = 1$. 
If $a_1 = a_2 = 0$, then $t = t_1t_2 = t_2t_1$, $|t| = p^a$ and 
$\det(t_1t_2) = 1$ imply $|t_1| = |t_2| = p^a$. This proves~(vii).

(e) Since $r( \tilde{D} ) \leq 3$ by Lemma~\ref{TildeCmoduloC}(b), the factors
$\tilde{D}_j$ are cyclic for $1 \leq j \leq 3$. By 
\cite[Corollary~$3.6.2$]{HL25}, we have $|\tilde{D}_j| = p^{a + a_j + b_j}$ for 
$1 \leq j \leq 3$. The first two claims follow from~(\ref{EstimatingTildeD}).
For the consequences of these consult~(b) and Lemma~\ref{YEqualOpZ}(d) and~(e).

Let us now prove the final claim. We have $n = m_1p^{a_1} + n_2 + n_3$ and
$p \nmid n_2n_3$. Since $p^a \mid n$ and $a > 0$, we also have 
$p \mid n_2 + n_3$ since $a_1 > 0$. This implies $n_2 \neq n_3$ since $p$ is odd.
Now $\tilde{C} = C_{\tilde{G}}( t )$, and thus $N_{\tilde{G}}( \tilde{C} )$ 
permutes the spaces~$V_1$,~$V_2$ and~$V_3$. Since these have pairwise distinct
dimensions, $N_{\tilde{G}}( \tilde{C} )$ fixes all of these, which gives our 
claim.

(f) This is contained in (c)--(e).

(g) Suppose first that the minimal polynomial of~$t$ is irreducible. 
Then~$\tilde{D}$ is cyclic by~(c), and our claims hold.

Suppose next that the minimal polynomial of~$t$ has two irreducible 
factors. Then, $b_1 = b_2 = 0$ by~(d). Thus $|\tilde{D}| = p^{2a + a_1 + a_2}
= |O_p( Z( \tilde{C} ) )|$ and so $\tilde{D} = O_p( Z( \tilde{C} ) ) 
\leq Z( \tilde{C} )$. Hence 
$\tilde{C} \leq C_{\tilde{G}}( \tilde{D} )
\leq C_{\tilde{G}}( D ) = \tilde{C}$. Since $D = G \cap \tilde{D}$, we obtain
$N_{\tilde{G}}( \tilde{D} ) \leq N_{\tilde{G}}( D ) \leq 
N_{\tilde{G}}( C_{\tilde{G}}( D ) ) = N_{\tilde{G}}( \tilde{C} ) \leq
N_{\tilde{G}}( O_p( Z( \tilde{C} ) ) ) = N_{\tilde{G}}( \tilde{D} )$. 
The case when the minimal polynomial of~$t$ has three irreducible
factors is treated analogously.
\end{proof}

\addtocounter{subsection}{4}
\subsection{The intermediate configuration}
\label{IntermediateConfiguration}
Let us write $h \in \{ 1, 2, 3 \}$ for the number of irreducible factors of the
minimal polynomial of~$t$. If $h = 1$, let $\bar{E}$ denote the trivial subgroup
of~$\bar{D}$; otherwise, let~$\bar{E}$ be the unique subgroup of $\bar{D}$ of 
order~$p$.

Let~$\bar{\bd}$ denote a Brauer correspondent of~$\bar{\bB}$ 
in~$C_{\bar{G}}( \bar{E} )$. Then $W( \bar{\bB} ) = W( \bar{\bd} )$. The latter 
will be computed by pulling back $(\bar{E},\bar{\bd})$ to~$G$.

\addtocounter{thm}{1}
\begin{notn}
\label{HypoV}
{\rm
Assume the terminology of Notation~\ref{HypoII}. 

(i) Put $E := \langle t', Y \rangle$ with $t' = 1$, if $h = 1$, 
and $t' := t^{p^{a+a_1-c'-1}}$, otherwise. (As we have assumed that $\bar{D}$
is non-trivial and $|\bar{D}| = p^{a+a_1-c'}$ by 
Lemma~\ref{AnalysingTheDeltas}(f), we have $a+a_1-c' \geq 1$.)

(ii) Let $\tilde{\bH} := C_{\tilde{\bG}}( E ) = C_{\tilde{\bG}}( t' )$ and 
$\bH := \tilde{\bH} \cap \bG = C_{\bG}( E ) = C_{\bG}( t' )$.
}\hfill{\textsquare}
\end{notn}

Notice that $\tilde{\bH} = \tilde{\bG}$ and $\bH = \bG$ if the minimal 
polynomial of~$t$ is irreducible. Notice also that the inclusion 
$\bH \rightarrow \tilde{\bH}$ is a regular embedding, and that~$\tilde{\bH}$ 
and ~$\bH$ are $F$-stable.

We collect a few properties of the objects introduced above, assuming the
terminology of
{\rm Notations~\ref{HypoII}, \ref{HypoIII}, \ref{HypoIV}, \ref{HypoV}} and
{\rm Lemma~\ref{DeterminantLem}(e)}. Also, recall that 
$D = \langle t , O_p( Z ) \rangle$ by Lemma~\ref{YEqualOpZ}(c),
and that $|t| = p^{a+a_1}$ by Lemma~\ref{AnalysingTheDeltas}(f).

\begin{lem}
\label{PropertiesOfDPrime}
Assume that $h \geq 2$. For Parts~{\rm (e)} and~{\rm (f)} assume in addition 
that $|\{ n_1, n_2, n_3 \}| \geq 2$ if $h = 3$ and $a_1 = 0$. Then the 
following statements hold.

{\rm (a)} We have $|t'| = p^{c'+1}$. 

{\rm (b)} We have $N_{\tilde{G}}( D ) \leq N_{\tilde{G}}( E )$ and 
$N_{G}( D ) \leq N_{G}( E )$.

{\rm (c)} Suppose that $h = 2$. Then 
$\tilde{H} = \tilde{G}_1 \times \tilde{G}_2$ unless $a_1 = a_2 = 0$ and 
$c' < c$. In the latter case, $\tilde{H} = \tilde{G}$.

{\rm (d)} Suppose that $h = 3$. If $a_1 = 0$,
then $\tilde{H} = \tilde{G}_1 \times \tilde{G}_2 \times \tilde{G}_3$.

If $a_1 > 0$, then $\tilde{H} = \tilde{G}_1 \times \tilde{G}_{2,3}$ with 
$\tilde{G}_{2,3} := I( {V}_2 \oplus {V}_3, \kappa_{2,3} )$, where~$\kappa_{2,3}$ 
denotes the restriction of~$\kappa$ to ${V}_2 \oplus {V}_3$.

{\rm (e)} We have
$[N_G( E )\colon\!H] = [N_{\tilde{G}}( E )\colon\!\tilde{H}] 
\leq 2$.

{\rm (f)} We have $\bar{H} = C_{\bar{G}}( \bar{E} )$.
\end{lem}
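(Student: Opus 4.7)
For part (a), since $|t| = p^{a+a_1}$ by Lemma~\ref{AnalysingTheDeltas}(f), a direct computation gives $|t'| = p^{a+a_1}/\gcd(p^{a+a_1}, p^{a+a_1-c'-1}) = p^{c'+1}$. For part (b), note that $Y \leq O_p(Z) \leq D$ (Lemma~\ref{YEqualOpZ}(c)) and $Y \leq Z(\tilde{G})$ (Lemma~\ref{YEqualOpZ}(a)), so any element of $N_{\tilde{G}}(D)$ (resp.\ $N_G(D)$) induces an automorphism of $\bar{D} = D/Y$. This automorphism fixes the unique order-$p$ subgroup $\bar{E}$ of the cyclic group~$\bar{D}$, and therefore it fixes the preimage $E$ of $\bar{E}$ in~$D$.

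Parts (c) and (d) form the technical heart of the argument. The first key step is to show that $t'|_{V_j}$ is the scalar $\mu_j := \xi_j^{p^{a+a_1-c'-1}}\cdot I_{V_j}$ on~$V_j$ for each~$j$. Indeed, $t_j$ has eigenvalues equal to the Galois conjugates of~$\xi_j$, and since $p^a \mid q^\delta - 1$, the $p^{a_j}$-th power collapses the Galois orbit to a single value; hence $t_j^{p^s}$ is a scalar whenever $s \geq a_j$. Using $c' < a$ (valid in every case with $h \geq 2$ by Lemma~\ref{AnalysingTheDeltas}) and the ordering $a_j \leq a_1$ from Notation~\ref{HypoIV}(iii), we obtain $a + a_1 - c' - 1 \geq a_1 \geq a_j$. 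It follows that $\tilde{H} = C_{\tilde{G}}(t')$ is determined by the eigenspace decomposition of~$t'$, and the statements of~(c) and~(d) reduce to tracking the coincidences among the~$\mu_j$. This is accomplished via the determinant relation $\det(t) = \prod_j \det(t_j) = 1$: writing $\xi_j = \zeta^{e_j}$ for a primitive $p^a$-th root of unity~$\zeta$, one combines $\sum_j m_j e_j \equiv 0 \pmod{p^a}$ with $\sum_j m_j \equiv 0 \pmod{p^a}$ (when applicable) and $p \nmid m_j$; subtracting the two relations forces that any partial coincidence $\mu_i = \mu_j$ not required by the structure produces a $p$-divisibility contradiction. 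For $h = 3$ with $a_1 > 0$ one finds directly $\mu_2 = \mu_3 = 1 \neq \mu_1$, yielding $\tilde{H} = \tilde{G}_1 \times I(V_2 \oplus V_3, \kappa_{2,3})$; and for $h = 2$ with $a_1 = a_2 = 0$ the analogous computation forces $c' = c$, so that the exceptional case $c' < c$ of~(c) never actually occurs.

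For (e), the first observation is that $\tilde{G} = G \cdot \tilde{H}$: the determinant surjects $\tilde{H}$ onto $\det(\tilde{G})$ by Lemma~\ref{CentralizersAndDeterminants}(b), so $N_{\tilde{G}}(E)/\tilde{H} \cong N_G(E)/H$ and it suffices to bound the first index. Any $g \in N_{\tilde{G}}(E)$ satisfies $g t' g^{-1} = t'^k y$ for some $k \in \mathbb{F}_p^*$ and $y \in Y$; equating the eigenvalue multisets of~$t'$ and~$t'^k y$ (the latter obtained by applying $\lambda \mapsto \lambda^k \mu_y$ to the eigenvalues of~$t'$, where $\mu_y$ denotes the scalar of~$y$) and using the assumption that the multiplicities $n_j$ (or $n_1$ and $n_2 + n_3$) do not all coincide, one concludes $k \in \{\pm 1\}$. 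For (f), apply Lemma~\ref{CentralizersModZ} to $(\tilde{G}, Y, t')$: combining~(e) with the fact that $N_{\tilde{G}}(\tilde{H}) = N_{\tilde{G}}(E)$ (forced by the block structure of~$\tilde{H}$) shows that $N_{\tilde{G}}(\tilde{H})/\tilde{H}$ has order dividing~$2$, hence is a $p'$-group since~$p$ is odd. Consequently $C_{\tilde{G}/Y}(\overline{t'}) = \tilde{H}/Y$, and intersecting with~$\bar{G}$ yields $\bar{H} = C_{\bar{G}}(\bar{E})$. The main obstacle will be the determinant-driven case analysis in parts~(c) and~(d), where we must combine $\det(t) = 1$, $p^a \mid n$, and the $p$-adic valuations of the differences $e_i - e_j$ carefully to rule out every spurious coincidence of the scalars~$\mu_j$.
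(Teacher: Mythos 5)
Your (a) and (b) agree with the paper. For (c)--(d), however, there is a genuine error. You assert that "for $h = 2$ with $a_1 = a_2 = 0$ the analogous computation forces $c' = c$, so that the exceptional case $c' < c$ of (c) never actually occurs." This is false. In the case $a_1 = a_2 = 0$ the defect group $D = \langle t \rangle$ is cyclic of order $p^a$ (Lemma~\ref{AnalysingTheDeltas}(d)(iii)), and $c'$ is determined by the choice of the central $p$-subgroup $Y$ (namely $p^{c'} = |Y|$), not by any determinant constraint on $t$; any $0 \leq c' \leq c$ is possible. The exceptional case $c' < c$ is genuine: Proposition~\ref{MainCorCase2}(c) computes $W(\bar{\bB}) = W_{\bar{D}}(\{c-c'\}) \not\cong k$ precisely there, and Proposition~\ref{Existence2PropB} constructs blocks realizing every $0 \leq c' \leq c < a$. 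The correct content of (c) is that when $a_1 = a_2 = 0$ and $c' < c$, the element $t' = t^{p^{a-c'-1}}$ lies in $O_p(Z) \leq Z(\tilde{G})$ (because its order $p^{c'+1}$ is at most $p^c$), so $\tilde{H} = C_{\tilde{G}}(t') = \tilde{G}$; this is what the "unless" clause means, not an impossibility. Your determinant computation must be conflating the fixed data $t$ with the variable choice of $Y$.

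Your arguments for (e) and (f) also differ from the paper's and have gaps. In (e), showing $g t' g^{-1} = t'^k y$ with $k \in \{\pm1\}$ does not immediately bound $[N_{\tilde{G}}(E):\tilde{H}]$: the action of $N_{\tilde{G}}(E)/\tilde{H}$ on $E$ need not be faithful (its kernel contains $\tilde{H}$, but you still need to know it is exactly $\tilde{H}$), and $E$ itself may have non-trivial structure. The paper argues more directly: by (c)--(d), $\tilde{H}$ is a product of classical groups attached to the eigenspaces of $t'$, and $N_{\tilde{G}}(E)$ permutes these eigenspaces; under the stated hypotheses the dimensions $n_j$ do not all coincide, so the induced permutation group has order at most $2$. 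For (f), you invoke $N_{\tilde{G}}(\tilde{H}) = N_{\tilde{G}}(E)$ without proof, and this is not obviously true. The paper instead deduces (f) from (e) together with Lemma~\ref{CentralizersModZ}: the index of $\bar{H}$ in $C_{\bar{G}}(\bar{E})$ is a $p$-power by that lemma, while (e) bounds it above by $2 < p$, so it equals $1$.
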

\begin{proof}
An element $\xi \in \mathbb{F}^*$ is called rational, if 
$\xi \in \mathbb{F}_{q^{\delta}}$. If $\xi$ is a $p$-element, this is the case
if and only if $|\xi| \leq p^a$.

(a) This follows from $|t| = p^{a+a_1}$.

(b) Notice that~$Y$ is a central subgroup of~$\tilde{G}$ so that 
$Y \leq N_{\tilde{G}}( D )$ and $Y \leq N_{\tilde{G}}( E )$. Hence
$$N_{\tilde{G}}( D )/Y = N_{\tilde{G}/Y}( D/Y ) \leq 
N_{\tilde{G}/Y}( E/Y ) = N_{\tilde{G}}( E )/Y,$$
where the equalities arise from \cite[Lemma~$2.2$(a)]{HL24}, and the inclusion
is due to the fact that $D/Y$ is cyclic. Our assertions follow from this.

(c) Suppose first that $a_1 > a_2$, so that Lemma~\ref{AnalysingTheDeltas}(d)(i)
applies. As $c' = 0$, we have $|t'| = p \leq p^a$, so that the eigenvalues 
of~$t'$ on $V_j$ are rational for $j = 1, 2$. As 
$\langle t' \rangle \cap O_p( Z ) = \{ 1 \}$, the 
eigenvalues of~$t'$ on~$V_1$ and~$V_2$ are distinct. 

Now suppose that $a_1 = a_2$, so that 
Lemma~\ref{AnalysingTheDeltas}(d)(ii)(iii) applies. Hence $|D| = p^{a+2a_1}$. 
From $D = \langle t, O_p(Z) \rangle$ and $|t| = p^{a+a_1}$ we conclude that
$|\langle t \rangle \cap O_p(Z)| = p^{c-a_1}$. By~(a), we have
$t' \in Z \leq Z( \tilde{G} )$, if and only if $c' < c - a_1$. The latter can 
only happen if $a_1 > 0$.

Now suppose that $c' = c - a_1$. If $a_1 > 0$, then $c' + 1 = c - a_1 + 1
\leq c \leq a$, so that the eigenvalues of $t'$ on~$V_1$ and~$V_2$ are 
rational. If $a_1 = 0$, then $|t| = p^a$ and thus the
eigenvalues of $t'$ on on~$V_1$ and~$V_2$ are rational.

(d) Apply Lemma~\ref{AnalysingTheDeltas}(e). Suppose first that $a_1 > 0$. 
Then $|t_1| = p^{a+a_1}$ and $p^a \mid |t_j|$ for $j = 2, 3$. As 
$|t'| = p$ and $a + a_1 - 1 \geq a$, the eigenvalues of~$t'$ on~$V_1$ have 
order~$p$, and $V_2 \oplus V_3$ is the fixed space of~$t'$. This proves our 
assertion.

Suppose now that $a_1 = 0$. Then $|t| = p^a$ and all the eigenvalues of~$t$ are 
rational. Moreover, as $Y = O_p(Z)$ and $|O_p(Z)| = p^a$, we may assume that~$t$ 
acts as the identity on~$V_3$, i.e.\ $\xi_3 = 1$. From $\det(t) = 1$ and 
$p \nmid n_1n_2$ we conclude 
$|\xi_1| = |\xi_1^{n_1}| = |\xi_2^{n_2}| = |\xi_2|$. Thus
$|\xi_1| = |\xi_2| = p^a$.

The eigenvalues of~$t'$ are $\xi_1^{p^{a-1}}$, $\xi_2^{p^{a-1}}$ and~$1$.
Assume that $\xi_1^{p^{a-1}} = \xi_2^{p^{a-1}}$. Then $1 = \det(t') = 
\xi_1^{p^{a-1}n_1}\xi_2^{p^{a-1}n_2} = \xi_1^{p^{a-1}(n_1 + n_2)}$. As~$p$
divides $n = n_1 + n_2 + n_3$ and $p \nmid n_1n_2n_3$,
the sum $n_1+n_2$ is prime to~$p$. This contradicts the fact that 
$|\xi_1| = p^a$ and thus implies that $t'$ has three pairwise distinct 
eigenvalues. This proves our assertion.

(e) As $[\tilde{H}\colon\!H] = q - \varepsilon$ by
Lemma~\ref{CentralizersAndDeterminants}, we obtain $\tilde{H}G = \tilde{G}$.
It follows that $N_{\tilde{G}}( E )G = \tilde{G}$ and thus
$[N_{\tilde{G}}( E )\colon\!N_G( E )] = q - \varepsilon$.
Hence
$[N_G( E )\colon\!H] = [N_{\tilde{G}}( E )\colon\!\tilde{H}]$.
The structure of~$\tilde{H}$ determined in~(c) and~(d) implies that
$[N_{\tilde{G}}( E )\colon\!\tilde{H}] \leq 2$, as $N_{\tilde{G}}( E )$
permutes the eigen\-spac\-es of~$t'$ on~$V$. 

(f) This follows from~(e) and Lemma~\ref{CentralizersModZ}.
\end{proof}

The following example shows that the extra hypothesis for the statements~(e) 
and~(f) in Lemma~\ref{PropertiesOfDPrime} is necessary.

\begin{exmp}
{\rm Let $p = n = 3$, and consider $G = \SL_3( 7 )$. Then 
$\bar{G} := G/Z = \PSL_3( 7 )$ has a cyclic $3$-block~$\bar{\bB}$ of defect~$1$.
Now~$G$ has a unique conjugacy class of non-central elements of order~$3$. If we 
let 
$$
t := \left( 
       \begin{array}{ccc} \xi & 0 & 0 \\ 0 & \xi^{-1} & 0 \\ 0 & 0 & 1 \end{array}
       \right),
$$
where $\xi \in \mathbb{F}_7$ has order~$3$, we may assume that 
$D = \langle t, Z \rangle$ is a defect group of the block~$\bB$ of~$G$ 
dominating~$\bar{\bB}$.

We are thus in the situation that $h = 3$, $a_1 = a_2 = a_3 = 0$ and 
$n_1 = n_2 = n_3 = 1$. Moreover, $D = E$. However, $|C_G( E )| = 36 = 
|C_{\bar{G}}( \bar{E} )|$, so that $\overline{C_G( E )}$ has index~$3$
in~$C_{\bar{G}}( \bar{E} )$.
}\hfill{\textsquare}
\end{exmp}

\addtocounter{subsection}{3}
\subsection{The intermediate blocks}
Keep the notation of Subsection \ref{IntermediateConfiguration}. In 
particular,~$h$ denotes the number of irreducible factors of the minimal
polynomial of~$t$. Recall that 
$\bar{H} = \overline{C_{G}( E )} = C_{\bar{G}}( \bar{E} )$ by 
Lemma~\ref{PropertiesOfDPrime}(f), unless $h = 3$, $a_1 = 0$ and 
$n_1 = n_2 = n_3$. In any case, $\bar{H} \leq C_{\bar{G}}( \bar{E} )$. Notice 
that $C_G( D ) = C_H( D )$ and 
$C_{\tilde{G}}( \tilde{D} ) = C_{\tilde{H}}( \tilde{D} ) $.

\addtocounter{thm}{1}
\begin{notn} 
\label{HypoVI}
{\rm 
(i) Let $\bd$ denote the block of $H$ such that 
$( E, \bd ) \leq ( D, \bc )$ as Brauer pairs of~$H$.

(ii) Let $\tilde{\bd}$ denote the block of $\tilde{H}$ such that 
$( \tilde{D}, \tilde{\bc} )$ is a $\tilde{\bd}$-Brauer pair of~$\tilde{H}$.

(iii) Let $\bar{\bd}$ denote the block of~$\bar{H}$ dominated by~$\bd$.
}
\end{notn}

\begin{lem}
\label{IntermediateBlocksLem}
If $h = 1$, assume that $D \not\leq Z$. Then~$\tilde{\bd}$ covers~$\bd$ 
and~$\bar{\bd}$ is a Brauer correspondent of~$\bar{\bB}$.
\end{lem}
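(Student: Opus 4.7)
The plan is to deduce both claims from standard compatibilities of block theory: (i) under a normal inclusion $H \trianglelefteq \tilde H$, block covering is preserved by the Brauer pair correspondence, yielding the first claim from $\tilde\bc \mid \bc$; and (ii) block domination via a central $p$-quotient commutes with the Brauer pair correspondence, yielding the second claim from the fact that $\bc$ is a Brauer correspondent of $\bB$ in $C$.

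For the first claim, I note that $H = \tilde H \cap G$ is normal in $\tilde H$ (since $G \trianglelefteq \tilde G$), and that $D = \tilde D \cap H$ follows from $D = \tilde D \cap G$ (Lemma~\ref{TildeCmoduloC}(c)) together with $\tilde D \leq \tilde C \leq \tilde H$. In the irreducible case $h = 1$ we have $E = Y \leq Z$, which forces $H = G$ and $\tilde H = \tilde G$; unravelling the definitions then gives $\bd = \bB$ and $\tilde\bd = \tilde\bB$, and the hypothesis $D \not\leq Z$ ensures via Lemma~\ref{AnalysingTheDeltas}(c) that $\tilde D$ is cyclic, so the assertion reduces to the standard fact that $\tilde\bB$ covers $\bB$ whenever $\tilde\bc$ covers $\bc$. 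In the reducible cases $h \geq 2$, Lemma~\ref{AnalysingTheDeltas}(g) gives $C_{\tilde G}(\tilde D) = \tilde C$, so $(\tilde D, \tilde\bc)$ is a Brauer pair of $\tilde H$ in the strict sense, and the Alperin--Broué theory of Brauer pairs under a normal inclusion transmits the covering $\tilde\bc \mid \bc$ to the covering $\tilde\bd \mid \bd$.

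For the second claim, \cite[Theorem~$5.8.11$]{NaTs} guarantees that each of $\bB$, $\bd$, $\bc$ dominates a unique block of the corresponding quotient by $Y$, namely $\bar\bB$, $\bar\bd$, $\bar\bc$, so $\bar\bd$ is well-defined. The Brauer pair containment $(E, \bd) \leq (D, \bc)$ in $H$ descends to the Brauer pair containment $(\bar E, \bar\bd) \leq (\bar D, \bar\bc)$ in $\bar H$ by the compatibility of centralizers with the $Y$-quotient from \cite[Lemma~$2.2$]{HL24} and Lemma~\ref{CentralizersModZ}. Since $\bc$ is a Brauer correspondent of $\bB$, the image $(\bar D, \bar\bc)$ is a maximal $\bar\bB$-Brauer pair of $\bar G$, and hence $\bar\bd$ is a Brauer correspondent of $\bar\bB$ in $\bar H$.

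The main technical obstacle will be the first claim in the irreducible case $h = 1$: despite the collapse $H = G$, one must identify $\bd$ and $\tilde\bd$ via their Brauer pair descriptions and apply the correct version of Brauer induction commuting with covering. The hypothesis $D \not\leq Z$ is essential, guaranteeing via Lemma~\ref{AnalysingTheDeltas}(c) that $\tilde D$ is cyclic and hence that a single unambiguous block $\tilde\bd$ is attached to $(\tilde D, \tilde\bc)$.
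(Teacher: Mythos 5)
Your argument for the second claim (that $\bar{\bd}$ is a Brauer correspondent of $\bar{\bB}$) is in the right spirit — descending the Brauer-pair containment $(E,\bd)\leq(D,\bc)$ to the quotient via Lemma~\ref{CentralizersModZ} and \cite[Lemma~$2.2$]{HL24}, though the paper instead works at the level of $N_G(E)$ using domination and \cite[Lemma~$2.4.1$]{HL25}; both are workable.

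Your argument for the first claim, however, has a genuine gap. You assert that "under a normal inclusion $H \trianglelefteq \tilde H$, block covering is preserved by the Brauer pair correspondence" and later invoke "the Alperin–Broué theory of Brauer pairs under a normal inclusion [which] transmits the covering $\tilde\bc \mid \bc$ to the covering $\tilde\bd \mid \bd$." No such transmission is automatic. The block $\tilde{\bd}$ is pinned down by the Brauer pair $(\tilde D,\tilde\bc)$ via the centralizer $C_{\tilde H}(\tilde D)$, whereas $\bd$ is pinned down via $C_H(D)$; the defect groups $\tilde D$ and $D$ are different, and there is in general no theorem saying that covering at the centralizer level forces covering at the group level. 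The tool that actually makes this work is the Harris--Kn\"orr correspondence \cite[Theorem]{HK85}, and even after invoking it one still has to verify that the Brauer correspondent $\bb$ of $\bd$ in $N_H(D)$ is the \emph{unique} block of $N_H(D)$ covering $\bc$ — which is the nontrivial point. The paper establishes this uniqueness by proving that $N_H(D)/C$ is a $p$-group, with a separate computation for $h=1$ (using the explicit structure of $\tilde C \cong \GL^{\varepsilon}_{n_1}(q^{p^{a_1}})$ from Lemma~\ref{AnalysingTheDeltas}(c) and the chain $N_G(D)\hookrightarrow N_{\tilde G}(D)\hookrightarrow N_{\tilde G}(\tilde C)\to N_{\tilde G}(\tilde C)/\tilde C$) and for $h\geq 2$ (descending to $\bar H$ and using that $\bar D$ is cyclic). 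Your proof contains none of this, and the remark that $D\not\leq Z$ makes $\tilde D$ cyclic and hence $\tilde\bd$ "unambiguous" is beside the point — a Brauer pair always determines a unique block; what the hypothesis really buys in the paper's proof is the structural description of $\tilde C$ needed for the $p$-group estimate.
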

\begin{proof}
Observe that $C_{\tilde{H}}( D ) = C_{\tilde{G}}( D ) = \tilde{C}$. The 
block~$\tilde{\bc}$ of~$\tilde{C}$ has defect group~$\tilde{D}$, and 
so~$\tilde{\bd}$ has defect group~$\tilde{D}$ by Lemma~\ref{BrauerPairsLem}. 
Since $C_G( D ) = C_{H}( D )$, 
the inclusion $( E, \bd ) \leq ( D, \bc )$ of Brauer pairs of~$H$ is also an
inclusion of Brauer pairs of~$G$. Hence $( E, \bd )$ is a $\bB$-Brauer pair by 
\cite[Proposition~$6.3.6$]{LinckBook}, and so~$\bd$ has defect group~$D$, once
more by Lemma~\ref{BrauerPairsLem}.

Notice that $N_{\tilde{H}}( \tilde{ D } ) \leq N_{\tilde{H}}( D )$ by 
Lemma~\ref{AnalysingTheDeltas}(g). Let~$\tilde{\bb}$ and~$\bb$ denote the 
Brauer correspondent of~$\tilde{\bd}$, respectively~$\bd$, 
in~$N_{\tilde{H}}( D )$, respectively~$N_{H}( D )$.
By 
the Harris-Kn{\"o}rr correspondence~\cite[Theorem]{HK85}, it suffices to show 
that~$\tilde{\bb}$ covers~$\bb$. By definition of the inclusion of Brauer
pairs,~$\tilde{\bb}$ covers~$\tilde{\bc}$, and~$\bb$ covers~$\bc$.

As~$\tilde{\bc}$ covers~$\bc$ by definition,~$\tilde{\bb}$ covers some block
of~$N_{H}( D )$ covering~$\bc$. We will show that $N_H( D )/C$ is a $p$-group.
Then~$\bb$ is the unique block of~$N_{H}( D )$ covering~$\bc$ and 
hence~$\tilde{\bb}$ covers~$\bb$.

Suppose that $h = 1$, so that $\tilde{H} = \tilde{G}$. By
Lemma~\ref{AnalysingTheDeltas}(c) we have $C_{\tilde{G}}( D ) = \tilde{C} \cong
\GL_{n_1}^{\varepsilon}( q^{p^{a_1}} )$ and thus 
$|N_{\tilde{G}}( \tilde{C} )/\tilde{C}| = p^{a_1}$. Now consider the chain of
maps
$$N_G( D ) \hookrightarrow N_{\tilde{G}}( D ) \hookrightarrow 
N_{\tilde{G}}( C_{\tilde{G}}( D ) ) = N_{\tilde{G}}( \tilde{C} ) \rightarrow 
N_{\tilde{G}}( \tilde{C} )/\tilde{C},$$
whose kernel equals $N_G( D ) \cap \tilde{C} = C$. It follows that 
$N_G( D )/C$ is a $p$-group. 
Suppose now that $h \geq 2$. First, 
$\overline{N_{H}( D )} = N_{\bar{H}}( \bar{D} )$ by \cite[Lemma~$2.2$(a)]{HL24}. 
Thus 
$$N_{H}( D )/C \cong \overline{N_{H}( D )}/\bar{C} \cong 
N_{\bar{H}}( \bar{D} )/\bar{C}.$$
Now $N_{\bar{H}}( \bar{D} )/C_{\bar{H}}( \bar{D} )$ is a $p$-group,
since~$\bar{D}$ is cyclic and~$N_{\bar{H}}( \bar{D} )$ acts trivially 
on~$\bar{E}$. Also, $C_{\bar{H}}( \bar{D} )/\bar{C}$ is a $p$-group by
Lemma~\ref{CentralizersModZ}. 
Since 
$$|N_{\bar{H}}( \bar{D} )/\bar{C}| = 
|N_{\bar{H}}( \bar{D} )/C_{\bar{H}}( \bar{D} )| \cdot
|C_{\bar{H}}( \bar{D} )/\bar{C}|,$$ 
it follows that $N_{H}( D )/C$ is a $p$-group. 

We now show that~$\bar{\bd}$ is a Brauer correspondent of~$\bar{\bB}$. This is 
trivial if $h = 1$. Assume then that $h \geq 2$. As 
$\bar{H} = C_{\bar{G}}( \bar{D} )$, this statement makes sense. By 
Lemma~\ref{PropertiesOfDPrime}(b), we have $N_G( D ) \leq N_G( E )$. 
Let~$\bb'$ and~$\bar{\bb'}$ denote the Brauer correspondents of~$\bB$ 
in~$N_G( E )$, respectively of~$\bar{\bB}$ in $N_{\bar{G}}( \bar{E} )$.
Then $\overline{N_{G}( E )} = N_{\bar{G}}( \bar{E} )$ by 
\cite[Lemma~$2.2$(a)]{HL24}, and~$\bb'$ dominates~$\bar{\bb}'$ by
\cite[Lemma~$2.4.1$]{HL25}. 
Now $( E, \bd )$ is a $\bB$-Brauer pair
and so $\bb'$ covers~$\bd$. This easily implies that $\bar{\bb'}$ 
covers~$\bar{\bd}$, and so~$\bar{\bd}$ is a Brauer correspondent 
of~$\bar{\bB}$.
\end{proof}

\begin{figure}
\caption{\label{ExtendedDiagramForSLn} Some subgroups and blocks of $\GL^\varepsilon_n( q )$, II}
$$
\begin{xy}
\xymatrix@C+1pt{
\tilde{H} = C_{\tilde{G}}( E ), \tilde{\bd} \ar@{-}[rd] \ar@{-}[dd] & & \\ 
& H = C_G( E ), {\bd} \ar@{->>}[rd] \ar@{-}[dd] &  \\
\tilde{C} = C_{\tilde{G}}( D ), \tilde{\bc} \ar@{-}[rd] & & \bar{H}, \bar{\bd} \\ 
& C = C_G( D ), {\bc} & &
}
\end{xy}
$$
\end{figure}
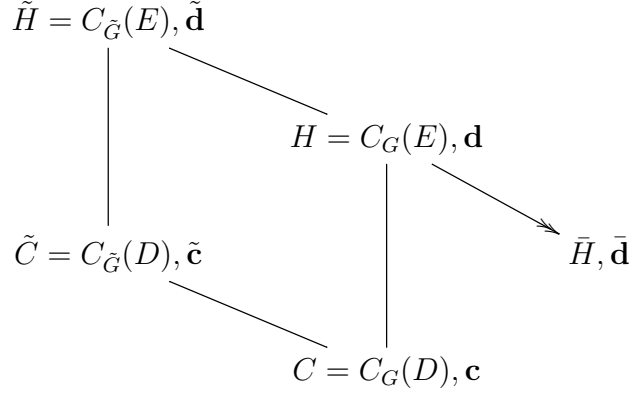

A diagram of the relevant groups and blocks is displayed in 
Figure~\ref{ExtendedDiagramForSLn}.

The maximal tori of $\tilde{C}$ and $\tilde{H}$ are also maximal tori 
of~$\tilde{G}$. The $\tilde{G}$-conjugacy classes of maximal tori of~$\tilde{G}$
are labelled by partitions of~$n$. If the $\tilde{G}$-conjugacy class of a 
maximal torus~$\tilde{T}$ is labelled by the partition 
$\pi = (f_1, f_2, \ldots , f_m)$, we call~$\pi$ the type of~$\tilde{T}$.
Then 
\begin{equation}
\label{ToriAndPartitions}
\tilde{T} \cong \tilde{T}_{f_1} \times \cdots \times \tilde{T}_{f_m}, 
\end{equation}
and
$N_{\tilde{G}}( \tilde{T} )$ fixes the sub-products of~(\ref{ToriAndPartitions})
corresponding to the same~$f_j$.

\begin{lem}
\label{TildeDIsStrictlyRegular}
If $h = 3$, assume that $a_1 > 0$. Then~$\tilde{\bd}$ is strictly regular with 
respect to a maximal torus $\tilde{T}$ of $\tilde{H}$ whose type has the parts 
$n_1, \ldots, n_h$.

In particular, there is $\theta \in \Irr( \tilde{T} )$ of $p'$-order and 
in general position with respect to~$\tilde{H}$, such that
$$\tilde{\chi} := \varepsilon_{\tilde{\bT}}\varepsilon_{\tilde{\bH}}
R_{\tilde{\bT}}^{\tilde{\bH}}( \theta )$$
is an irreducible character of~$\tilde{\bd}$.
\end{lem}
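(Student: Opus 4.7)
The plan is to combine the decomposition of $\tilde{H}$ from Lemma~\ref{PropertiesOfDPrime}(c)(d) with the fact that, in each direct factor, we are in an essentially cyclic-block situation where strict regularity with respect to a Coxeter-type torus is already known from the analysis carried out in \cite[Section~4]{HL25}. First I would split according to the value of $h$. For $h = 1$, Lemma~\ref{AnalysingTheDeltas}(c) gives $\tilde{H} = \tilde{G}$ together with $\tilde{D}$ cyclic and equal to the Sylow $p$-subgroup of a maximal torus $\tilde{T}$ of $\tilde{G}$ of type $(n_1) = (n)$, and $\tilde{\bd} = \tilde{\bc}$ is a cyclic block; strict regularity then follows directly from the results of \cite[Section~4]{HL25}. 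For $h = 2$, Lemma~\ref{PropertiesOfDPrime}(c) gives either the generic case $\tilde{H} = \tilde{G}_1 \times \tilde{G}_2$, in which $\tilde{\bd}$ factors as a tensor product of cyclic blocks $\tilde{\bd}_j$, or the exceptional sub-case $\tilde{H} = \tilde{G}$ (when $a_1 = a_2 = 0$ and $c' < c$). In either case one combines tori $\tilde{T}_j$ of type $(n_j)$ from the $h = 1$ analysis into a torus $\tilde{T}$ of type $(n_1, n_2)$. For $h = 3$ with $a_1 > 0$, Lemma~\ref{PropertiesOfDPrime}(d) gives $\tilde{H} = \tilde{G}_1 \times \tilde{G}_{2,3}$ and Lemma~\ref{AnalysingTheDeltas}(e) guarantees $n_2 \neq n_3$; applying the $h = 1$ case to $\tilde{G}_1$ and the $h = 2$ case to $\tilde{G}_{2,3}$ yields tori of types $(n_1)$ and $(n_2, n_3)$, whose product $\tilde{T}$ has the required type $(n_1, n_2, n_3)$.

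For the second assertion, in each direct factor I would pick a linear character of $p'$-order whose Deligne--Lusztig induction to that factor is (up to sign) an irreducible character lying in the corresponding sub-block; the existence of such a character is part of the cyclic block description of \cite[Section~4]{HL25}. The tensor product $\theta$ of these factorwise characters is then an element of $\Irr(\tilde{T})$ of $p'$-order. Since the relative Weyl group $N_{\tilde{H}}(\tilde{T})/\tilde{T}$ decomposes as the product of the factorwise Weyl groups (using $n_2 \neq n_3$ for $h = 3$ to rule out cross-factor permutations, and the explicit form of $\tilde{H}$ for $h = 2$), general position for $\theta$ with respect to $\tilde{H}$ reduces to factorwise general position. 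Transitivity of Deligne--Lusztig induction across the direct-product decomposition of $\tilde{H}$ then yields that $\tilde{\chi} = \varepsilon_{\tilde{\bT}} \varepsilon_{\tilde{\bH}} R_{\tilde{\bT}}^{\tilde{\bH}}(\theta)$ is irreducible, and block compatibility of $R$ places it in $\tilde{\bd}$.

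The main obstacle is not the case-splitting itself, which is fairly mechanical, but the identification of $\tilde{\bd}$ with the block of $\tilde{H}$ produced by the pair $(\tilde{T}, \theta)$ in the Deligne--Lusztig / block-theoretic dictionary. One must verify that the semisimple label of $\tilde{\bd}$ (inherited from $\tilde{\bc}$ via the Brauer-pair inclusion of Notation~\ref{HypoVI}) is matched by $\theta$ under the duality $\Irr(\tilde{T}) \leftrightarrow \tilde{T}^*$, and that the resulting cyclic defect group is exactly $\tilde{D}$. For the exceptional sub-case of $h = 2$ and for the mixed factorisation in the $h = 3$ case this bookkeeping is the most delicate step, but it is already essentially embedded in the block-theoretic analysis used throughout Section~\ref{TheSpecialLinearAndUnitaryGroups}.
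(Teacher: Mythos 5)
Your overall strategy — build $\tilde{T}$ from Coxeter-type tori of type $(n_j)$, check that the relative Weyl group $N_{\tilde{H}}(\tilde{T})/\tilde{T}$ involves no cross-factor permutations, and thereby reduce general position to factorwise general position — is the same geometric idea as the paper's proof. But you proceed by decomposing $\tilde{H}$ itself into direct factors and arguing recursively inside each factor, whereas the paper works from the other end: it uses the known decomposition $\tilde{C} = \tilde{C}_1 \times \cdots \times \tilde{C}_h$, invokes strict regularity of each $\tilde{\bc}_j$ (via \cite[Corollary~3.6.2]{HL25}), obtains $\theta$ of $p'$-order in general position with respect to $\tilde{C}$, and then upgrades this to general position with respect to $\tilde{H}$ by the claim about $N_{\tilde{H}}(\tilde{T})$ fixing the factors. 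This makes the paper's argument uniform in $h$ and avoids your recursive appeal to "the $h=2$ case for $\tilde{G}_{2,3}$", which is somewhat self-referential: the $h=2$ analysis in the paper is carried out for a block of $\tilde{H}$ attached to the global block $\bB$, and it is not immediately clear that the factor $\tilde{\bd}_{2,3}$ of $\tilde{G}_{2,3}$ falls under that analysis verbatim.

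Two concrete gaps remain. First, in the exceptional $h=2$ subcase $\tilde{H} = \tilde{G}$ (i.e.\ $a_1 = a_2 = 0$ and $c' < c$), you justify the absence of cross-factor elements in $N_{\tilde{H}}(\tilde{T})/\tilde{T}$ by appealing to "the explicit form of $\tilde{H}$", but in this subcase $\tilde{H} = \tilde{G}$ is not a direct product, so the explicit form alone rules out nothing. What is actually needed — and what the paper uses — is Lemma~\ref{AnalysingTheDeltas}(d)(vi), which gives $n_1 \neq n_2$ under exactly the hypothesis $c' < c$; only then does the type $(n_1,n_2)$ of $\tilde{T}$ force the Weyl group to preserve the factors. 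Second, you correctly identify the block-membership of $\tilde{\chi}$ in $\tilde{\bd}$ as the delicate point, but you leave it as "bookkeeping essentially embedded in the analysis". The paper closes this precisely and cheaply: once general position with respect to $\tilde{H}$ is established, it rewrites $R_{\tilde{\bT}}^{\tilde{\bH}}(\theta) = R_{\tilde{\bC}}^{\tilde{\bH}}(\tilde{\psi})$ by transitivity and then applies Cabanes--Enguehard \cite[Theorem~2.5]{CaEn99}, which says exactly that twisted induction from $\tilde{\bc}$ lands in $\tilde{\bd}$ (this is where the Brauer-pair condition $(\tilde{D},\tilde{\bc})$ being a $\tilde{\bd}$-Brauer pair from Notation~\ref{HypoVI}(ii) enters). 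Without naming a tool of this kind, your argument does not actually establish $\tilde{\chi} \in \Irr(\tilde{\bd})$.
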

\begin{proof}
For $j \in \{ 1, \ldots , h \}$, the block $\tilde{\bc}_j$ of $\tilde{C}_j$ is
strictly regular with respect to a cyclic maximal torus $\tilde{T_j}$ of 
$\tilde{C}_j$, of type~$(n_j)$ when viewed as a torus of~$\tilde{G}_j$,
and~$\tilde{D}_j$ is a Sylow $p$-subgroup of~$\tilde{T_j}$; see 
\cite[Corollary~$3.6.2$]{HL25}. Thus $\tilde{T} = 
\tilde{T}_1 \times \cdots \times \tilde{T}_h$ contains $\tilde{D}$ as a
Sylow $p$-subgroup and $\tilde{\bc}$ is a strictly regular block of 
$\tilde{C} = \tilde{C}_1 \times \cdots \times \tilde{C}_h$ with respect 
to~$\tilde{T}$.

We claim that $N_{\tilde{H}}( \tilde{T} )$ fixes each of the factors 
$\tilde{T}_j$ for $1 \leq j \leq h$. This is trivial if $h = 1$. 
It is also clear if $h = 2$ and $\tilde{H} = \tilde{G}_1 \times \tilde{G}_2$.
If $h = 2$ and $\tilde{H} = \tilde{G}$, then $a_1 = a_2 = 0$ and $c' < c$ by
Lemma~\ref{PropertiesOfDPrime}(c). But then $n_1 \neq n_2$ by 
Lemma~\ref{AnalysingTheDeltas}(d)(vi), 
which also implies the claim. If $h = 3$, we have $a_1 > 0$ by assumption, and 
then $n_1 = m_1p^{a_1}, n_2, n_3$ are pairwise distinct. Indeed, $n_2 = n_3$ would 
give $n = m_1p^{a_1} + 2n_2$ and thus $p \mid n_2$, a contradiction. This yields 
our claim.

Now $\tilde{\bc}$ contains an irreducible character~$\tilde{\psi}$ of the form
$$\tilde{\psi} :=  
        \varepsilon_{\tilde{\bT}}\varepsilon_{\tilde{\bC}} 
        R_{\tilde{\bT}}^{\tilde{\bC}}( \theta ),$$
for some irreducible character~$\theta$ of~$\tilde{T}$ of $p'$-order and 
in general position with
respect to~$\tilde{C}$. By the claim,~$\theta$ is also in general position with 
respect to~$\tilde{H}$, and thus
$$\varepsilon_{\tilde{\bT}}\varepsilon_{\tilde{\bH}}
R_{\tilde{\bT}}^{\tilde{\bH}}( \theta ) = 
\varepsilon_{\tilde{\bC}}\varepsilon_{\tilde{\bH}}
R_{\tilde{\bC}}^{\tilde{\bH}}( \tilde{\psi} )$$
is an irreducible character of~$\tilde{H}$. Moreover,~$\tilde{\chi}$
lies in~$\tilde{\bd}$ by~\cite[Theorem~$2.5$]{CaEn99}. This proves our 
assertions.
\end{proof}

If $h = 1$, we have $\bar{\bB} = \bar{\bd}$, $\bB = \bd$, and 
$\tilde{\bB} = \tilde{\bd}$. In this case, we compute $W( \bar{\bB} )$ from 
$W( \bB )$ using \cite[Lemma~$2.4.2$(c)]{HL25}. If $h \geq 2$, then, by 
definition, $\bar{H} \leq C_{\bar{G}}( \bar{E} )$, where~$\bar{E}$ denotes the
unique subgroup of~$\bar{D}$ of order~$p$. In this case,
\cite[Remark~$2.3.3$]{HL25} shows that~$W( \bar{\bd} )$ can be 
computed from the sign sequence $\sigma_{\bar{\chi}}^{[l]}( \bar{t} )$, 
where~$\bar{\chi}$ denotes the non-exceptional character of~$\bar{\bd}$, and~$l$
is defined by $|\bar{D}| = p^l$. Namely, if $\Lambda = \{ 0, \ldots , l - 1 \}$
and $A \subseteq \Lambda \setminus \{ 0 \}$ is such that 
$\sigma_{\bar{\chi}}^{[l]}( \bar{t} ) = \omega_{\Lambda}( \mathbf{1}_A )$, then 
$W( \bar{\bd} ) = W_{\bar{D}}( A )$; for the notation see 
\cite[Definitions~$2.1.1$,~$2.1.2$]{HL25}.

\begin{lem}
\label{Approach}
If $h = 3$, assume that $a_1 > 0$.
Let $\bar{\chi}$ denote the non-exceptional character of~$\Irr(\bar{\bd})$,
and let~$\chi$ denote the inflation of~$\bar{\chi}$ to~$H$. Moreover, let
$\tilde{\chi} \in \Irr( \tilde{\bd} )$ be as in 
{\rm Lemma~\ref{TildeDIsStrictlyRegular}}.
Then
\begin{equation}
\label{ApproachEqI}
\sigma_{\chi}^{[a+a_1]}( t ) =
\omega_{\tilde{\bH}}^{[a+a_1]}( t ) = \sigma_{\tilde{\chi}}^{[a+a_1]}( t )
\end{equation}
and
\begin{equation}
\label{ApproachEqII}
\sigma_{\bar{\chi}}^{[a+a_1-c']}( \bar{t} ) =
\omega_{\tilde{\bH}}^{[a+a_1-c']}( t ).
\end{equation}
(For the notation $\omega_{\tilde{\bH}}$ see \cite[Definition~$2.5.6$]{HL25}).
\end{lem}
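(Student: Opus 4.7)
The plan is to establish the three equalities separately, each via a distinct tool: the Deligne--Lusztig character formula for the inner equality of (\ref{ApproachEqI}), Clifford theory relating $\tilde{H}$ and $H$ for the outer equality of (\ref{ApproachEqI}), and the inflation relation between $\chi$ and $\bar{\chi}$ for (\ref{ApproachEqII}).

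For the inner equality $\omega_{\tilde{\bH}}^{[a+a_1]}(t) = \sigma_{\tilde{\chi}}^{[a+a_1]}(t)$, I would evaluate $\tilde{\chi} = \varepsilon_{\tilde{\bT}}\varepsilon_{\tilde{\bH}} R_{\tilde{\bT}}^{\tilde{\bH}}(\theta)$ of Lemma~\ref{TildeDIsStrictlyRegular} at each power $t^{p^j}$ with $0 \leq j < a+a_1$. Since $t \in \tilde{T}$ and $\theta$ has $p'$-order, the character formula for Deligne--Lusztig induction expresses $\tilde{\chi}(t^{p^j})$ as a sum of $p'$-roots of unity, weighted by the sign $\varepsilon_{\tilde{\bT}}\varepsilon_{C_{\tilde{\bH}}(t^{p^j})^\circ}$; this sign product is precisely $\omega_{\tilde{\bH}}(t^{p^j})$ by \cite[Definition~$2.5.6$]{HL25}. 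Extracting signs entrywise therefore yields the claimed equality of sequences.

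For the outer equality $\sigma_{\chi}^{[a+a_1]}(t) = \sigma_{\tilde{\chi}}^{[a+a_1]}(t)$, I would invoke Clifford theory between $\tilde{H}$ and $H$. Since $\tilde{\bd}$ covers $\bd$ by Lemma~\ref{IntermediateBlocksLem} and $\tilde{H}/H$ is cyclic of order $q-\varepsilon$, this group acts on $\Irr(\bd)$ preserving the Brauer-tree structure of the cyclic block $\bd$; in particular, exceptional and non-exceptional characters are permuted separately. This forces the non-exceptional character $\chi$ to be $\tilde{H}$-stable, hence $\chi$ extends to $\tilde{H}$ and $\tilde{\chi}\downarrow_H = \chi$. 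It follows that $\tilde{\chi}(t^{p^j}) = \chi(t^{p^j})$ for every $j$, and so the two sign sequences coincide termwise.

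Equation~(\ref{ApproachEqII}) then follows by inflation: since $\chi(t^{p^j}) = \bar{\chi}(\bar{t}^{p^j})$ for all $j$ and $|\bar{t}| = p^{a+a_1-c'}$, the sign sequence $\sigma_{\bar{\chi}}^{[a+a_1-c']}(\bar{t})$ is the initial segment of length $a+a_1-c'$ of $\sigma_{\chi}^{[a+a_1]}(t)$, and the same truncation relates $\omega_{\tilde{\bH}}^{[a+a_1-c']}(t)$ to $\omega_{\tilde{\bH}}^{[a+a_1]}(t)$ by construction. The hardest step will be the Clifford one: justifying that $\chi$ is indeed $\tilde{H}$-stable and that $\tilde{\chi}\downarrow_H$ has no other constituents than $\chi$. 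This requires a careful analysis of how the $\tilde{H}/H$-action interacts with the Brauer tree of $\bd$, combined with the structural description of $\tilde{H}$ in Lemma~\ref{PropertiesOfDPrime}(c),(d), and is most delicate in the subcase $h = 3$ with $a_1 > 0$.
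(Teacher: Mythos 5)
Your broad strategy is aligned with the paper's, and the first and last steps are sound: the identity $\sigma_{\tilde{\chi}}^{[a+a_1]}(t)=\omega_{\tilde{\bH}}^{[a+a_1]}(t)$ is precisely the content of \cite[Lemma~2.5.7]{HL25} (the Deligne--Lusztig sign computation you sketch), and the inflation/truncation argument for~(\ref{ApproachEqII}) is exactly what the paper does, using that the kernel of $\langle t\rangle\to\bar D$ has order $p^{c'}$.

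The gap is in the middle step. You assert that $\chi$ is $\tilde H$-stable and then conclude $\tilde\chi\!\downarrow_H=\chi$, but you never establish that $\chi$ is a constituent of $\tilde\chi\!\downarrow_H$ in the first place, which is the crux. Stability of $\chi$ under the action of $\tilde H/H$ on $\Irr(\bd)$ tells you nothing about \emph{which} character of $H$ lies below the particular Deligne--Lusztig character $\tilde\chi$ constructed in Lemma~\ref{TildeDIsStrictlyRegular}; it could a priori lie above an exceptional character. Moreover, the conclusion $\tilde\chi\!\downarrow_H=\chi$ (irreducibility of the restriction) is strictly stronger than what is needed and is not proved. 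The paper resolves this with a $p$-rationality argument rather than Clifford theory: since $\bd$ dominates the nilpotent cyclic block $\bar{\bd}$, \cite[Lemma~2.3.4]{HL25} identifies $\bar\chi$ as the unique $p$-rational character of $\bar{\bd}$; since characters of $\bd$ not containing $Y$ in their kernel are not $p$-rational, $\chi$ is the unique $p$-rational character of $\bd$ and lifts its unique Brauer character. These are exactly the hypotheses of \cite[Lemma~2.5.18]{HL25}, which then gives both that $\tilde\chi$ lies above $\chi$ and that $\sigma_{\tilde\chi}(u)=\sigma_\chi(u)$ for all $u\in\tilde D$, without any claim about the restriction being irreducible. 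One also needs \cite[Lemma~3.3]{HL24} to know the values $\chi(u)$ for $u\in\langle t\rangle$ are non-zero integers, so the sign sequences take values in $\{\pm1\}$. To repair your argument you would have to supply this $p$-rationality input yourself, or otherwise prove directly that $\chi$ lies under $\tilde\chi$; the Brauer-tree analysis you propose does not by itself bridge that gap.
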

\begin{proof}
By definition,~$\bd$ dominates the nilpotent cyclic block~$\bar{\bd}$. By 
\cite[Lemma $2.3.4$]{HL25},
the character~$\bar{\chi}$ is the unique $p$-rational character in~$\bar{\bd}$.
As the irreducible characters of~$\bd$ which do not have $Y$ in their kernels
are not $p$-rational, $\Irr( \bd )$ has a unique $p$-rational character,
namely~$\chi$, and~$\chi$ lifts the unique irreducible Brauer character 
of~$\bd$. Thus the hypotheses of \cite[Lemma $2.5.18$]{HL25} are satisfied, and 
so~$\tilde{\chi}$ lies above~$\chi$.

As $\bar{\chi}$ is the non-exceptional character of $\Irr(\bar{\bd})$, the
values $\bar{\chi}( \bar{u} )$ are non-zero integers for $\bar{u} \in \bar{D}$;
see \cite[Lemma~$3.3$]{HL24}. The same is then true for the values $\chi( u )$
for $u \in \langle t \rangle$.

By \cite[Lemma $2.5.18$]{HL25} we have
$\sigma_{\tilde{\chi}}( u ) = \sigma_{\chi}(u)$ for all $u \in \tilde{D}$, which
implies
$$\sigma_{\tilde{\chi}}^{[a+a_1]}( t ) = 
\sigma_{\chi}^{[a+a_1]}( t ).$$
By \cite[Lemma~$2.5.7$]{HL25},
$$\sigma_{\tilde{\chi}}^{[a+a_1]}( t ) = 
\omega_{\tilde{\bH}}^{[a+a_1]}( t ),$$
which yields~(\ref{ApproachEqI}). This trivially implies
$$\sigma_{\chi}^{[a+a_1 - c']}( t ) =
\omega_{\tilde{\bH}}^{[a+a_1 - c']}( t ).$$
Since~$\chi$ is the inflation of $\bar{\chi}$ to~$H$, and as the kernel
of the epimorphism $\langle t \rangle \rightarrow \langle \bar{t} \rangle 
= \bar{D}$ has order~$p^{c'}$, we obtain
$$\sigma_{\bar{\chi}}^{[a+a_1-c']}( \bar{t} ) = 
\sigma_{\chi}^{[a+a_1-c']}( t ),$$
which gives~(\ref{ApproachEqII}).
\end{proof}

\addtocounter{subsection}{4}
\subsection{Computing the invariants in case $h = 1$}
Assume that the minimal polynomial of~$t$ is irreducible. Recall the results of 
Lemmas \ref{AnalysingTheDeltas}(c) and~\ref{IntermediateBlocksLem} in this case. 
If $D \not \leq Z$, then $n = m_1p^{a + a_1}$ with $p \nmid m_1$ and $a_1 > 0$. 
The defect group~$D$ is cyclic of order $p^{a+a_1}$, and~$\bB$ is covered by a 
cyclic block $\tilde{\bB}$ of~$\tilde{G}$ with defect group~$\tilde{D}$ of 
order~$p^{2a+a_1}$. Moreover, $|\bar{D}| = p^{a + a_1 - c'}$.

\addtocounter{thm}{1}
\begin{prop}
\label{Case1Prop}
Suppose that the minimal polynomial of~$t$ is irreducible. If $D \leq Z$, then
$W( \bar{\bB} ) \cong k$.

Assume in the following that $D \not \leq Z$, so that $a_1 \geq 1$ by
{\rm Lema~\ref{AnalysingTheDeltas}(c)}. Then $W( \bar{\bB} ) \cong k$
if $\varepsilon = 1$, or if $p \equiv 1\,\,(\mbox{\rm mod}\,\,4)$, or if~$m_1'$ 
is even, or if $c' = a$ and $a_1 = 1$. Otherwise, 
$W( \bar{\bB} ) = W_{\bar{D}}( [a - c', a + a_1 - c' - 1] )$, if $c' < a$,
and $W( \bar{\bB} ) = W_{\bar{D}}( [1, a_1 - 1] )$, if $c' = a$ and $a_1 > 1$.
\end{prop}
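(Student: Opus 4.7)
The proof splits by whether $D \leq Z$ or not. First I would dispose of the case $D \leq Z$: since the minimal polynomial of $t$ is irreducible and $t \in Z(G)$, $t$ must be a scalar, so $\bar{D} \leq Z(\bar{G})$. Then $\bar{\bB}$ is a nilpotent cyclic block with central defect group, hence source algebra equivalent to $k\bar{D}$, giving $W(\bar{\bB}) \cong k$ (compatibly with \cite[Lemma~$2.4.2$(c)]{HL25} applied to the trivial $W(\bB)$).

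For $D \not\leq Z$, Lemma~\ref{AnalysingTheDeltas}(c) yields $a_1 \geq 1$, $b_1 = a$, $n = m_1 p^{a+a_1}$, and $|t| = p^{a+a_1}$. Put $l := a + a_1 - c'$. Since $h = 1$ gives $\tilde{H} = \tilde{G}$, Lemma~\ref{Approach} reduces the computation of $W(\bar{\bB})$ to that of the sign sequence $\omega_{\tilde{\bG}}^{[l]}(t) = (\omega_{\tilde{\bG}}(t^{p^j}))_{0 \leq j < l}$. A standard $p$-adic lifting argument (lifting-the-exponent applied to $q^\delta = 1 + p^a u$ with $p \nmid u$) shows that $\xi_1^{p^j}$ has Galois orbit of size $p^{a_1-j}$ over $\mathbb{F}_{q^{\delta}}$ when $0 \leq j < a_1$, and size~$1$ otherwise. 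Combined with Lemma~\ref{DeterminantLem}(e) applied to $t^{p^j}$, this gives
\[
C_{\tilde{\bG}}(t^{p^j})^F \;\cong\; \begin{cases} \GL^{\varepsilon}_{m_1 p^{a+j}}(q^{p^{a_1-j}}) & 0 \leq j < a_1, \\[2pt] \tilde{G} & a_1 \leq j < l.\end{cases}
\]

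By definition of $\omega_{\tilde{\bG}}$, the $j$-th entry equals $\varepsilon_{\tilde{\bG}}\varepsilon_{C_{\tilde{\bG}}(t^{p^j})}$, a sign determined by the $\mathbb{F}_q$-split ranks of these groups. For $\varepsilon = 1$ one has $\varepsilon_{\GL_m(q^k)} = (-1)^m$, and the relative sign is $(-1)^{m_1(p^{a+a_1}+p^{a+j})} = +1$ since $p$ is odd; hence every entry of the sequence equals $+1$, so $A = \emptyset$ and $W(\bar{\bB}) \cong k$. For $\varepsilon = -1$, using $\varepsilon_{\GU_m(q^{k})} = (-1)^{\lfloor m/2 \rfloor}$ for odd $k$, one computes
\[
\omega_{\tilde{\bG}}(t^{p^j}) = (-1)^{\lfloor m_1 p^{a+j}/2\rfloor - \lfloor m_1 p^{a+a_1}/2\rfloor} \qquad (0 \leq j < a_1),
\]
and $\omega_{\tilde{\bG}}(t^{p^j}) = +1$ for $a_1 \leq j < l$. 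A direct $2$-adic analysis then shows this exponent is always even when $p \equiv 1 \pmod 4$ (both $p^{a+j}$ and $p^{a+a_1}$ are then $\equiv 1 \pmod 4$, so the floors match modulo $2$) or when $m_1'$ is even (by Lemma~\ref{DeterminantLem}(c), forcing the required parity to vanish). In those cases $A = \emptyset$ and $W(\bar{\bB}) \cong k$. Otherwise the sign is $-1$ on a consecutive block of indices, and translating via \cite[Definitions~$2.1.1$,~$2.1.2$]{HL25} yields $A = [a-c',l-1]$ if $c' < a$, $A = [1, a_1-1]$ if $c' = a$ and $a_1 > 1$ (the index $0$ being excluded from $\{1,\ldots,l-1\}$), and $A = \emptyset$ when $c' = a$, $a_1 = 1$ (where $l = 1$).

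The main obstacle will be the precise $2$-adic parity bookkeeping for $\varepsilon = -1$: determining exactly when $\lfloor m_1 p^{a+j}/2\rfloor - \lfloor m_1 p^{a+a_1}/2\rfloor$ is odd in terms of the residues of $m_1$, $m_1'$ and $p$ modulo~$4$, and converting the resulting sign pattern on $\{0,\ldots,l-1\}$ into the interval form of $A$ via the combinatorics defining $\omega_\Lambda(\mathbf{1}_A)$. The remaining parts of the argument are routine bookkeeping using the centralizer structures from \cite[Section~$3$]{HL25} and Lemmas~\ref{AnalysingTheDeltas} and~\ref{DeterminantLem}.
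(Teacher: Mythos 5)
Your handling of the case $D\leq Z$ is fine and matches the paper (which cites \cite[Lemma~$3.6$(b)]{HL24}).

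For $D\not\leq Z$ you diverge from the paper in two ways, and both create problems. First, the paper does not recompute the sign pattern from centralizer data: it applies \cite[Lemma~$4.1.3$]{HL25} directly to $\sigma_{\tilde{\chi}}^{[a+a_1]}(t)$ (using $\tilde{\chi}$ from Lemma~\ref{TildeDIsStrictlyRegular} together with Equation~(\ref{ApproachEqI}) of Lemma~\ref{Approach}), obtaining $W(\bB)=W_D(I)$ with $I=\emptyset$ except when $\varepsilon=-1$, $n$ odd, $p\equiv -1\ (\mathrm{mod}\ 4)$, in which case $I=[a,\,a+a_1-1]$. You instead re-derive the signs from the centralizer tower $\GL^{\varepsilon}_{m_1p^{a+j}}(q^{p^{a_1-j}})$; this is the content of \cite[Lemma~$4.1.3$]{HL25}, so you are essentially re-proving a cited result. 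Since you leave the $2$-adic bookkeeping for $\varepsilon=-1$ explicitly unfinished (``the main obstacle will be\ldots''), the argument as written does not establish the claimed conditions involving $p\bmod 4$ and $m_1'$; this is a genuine gap rather than routine checking.

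Second, and more subtly, you work at length $l=a+a_1-c'$ throughout, i.e.\ you try to read $W(\bar\bB)$ directly from $\omega^{[l]}_{\tilde{\bG}}(t)$ via Equation~(\ref{ApproachEqII}). To convert a sign sequence into a subset $A$ via $\omega_\Lambda$ (\cite[Remark~$2.3.3$]{HL25}), one needs $\langle \bar t^{\,p^{l-1}}\rangle$ to be central in the ambient group. When $c'<a$ this is automatic because $t^{p^{l-1}}$ is already a scalar of $\tilde G$; but when $c'=a$ the element $t^{p^{l-1}}$ has order $p^{a+1}$ and is not scalar, and the order-$p$ subgroup of $\bar D$ need not be central in $\bar G$. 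Your parenthetical remark that ``the index $0$ is excluded from $\{1,\ldots,l-1\}$'' does not resolve this; the sequence $\omega^{[l]}_{\tilde{\bG}}(t)$ genuinely starts with $-1$ in that case, and one cannot simply discard a leading $-1$. The paper sidesteps this entirely by first computing $W(\bB)$ at the full length $a+a_1$ (where $t^{p^{a+a_1-1}}$ has order $p$ and is central), and only then descending to $W(\bar\bB)$ via \cite[Lemma~$2.4.2$(c)]{HL25}; that lemma is precisely what absorbs the shift by $c'$, including the truncation at index~$0$ which yields the stated intervals $[a-c',\,l-1]$ and $[1,\,a_1-1]$ and the degeneration to $k$ when $c'=a$, $a_1=1$. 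If you want to keep your direct route, you must either justify the central-subgroup hypothesis of Remark~$2.3.3$ in the case $c'=a$, or instead pass through $W(\bB)$ and \cite[Lemma~$2.4.2$(c)]{HL25} as the paper does.
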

\begin{proof}
If $D \leq Z$, then $\bar{D} \leq Z( \bar{G} )$, and so
$W( \bar{\bB} ) \cong k$ by \cite[Lemma~$3.6$(b)]{HL24}. Assume then that
$D \not\leq Z$.

Put $l := a + a_1$ and $\Lambda := \{ 0, \ldots , l - 1\}$. Since $t^{p^{l -1}}$ 
has order~$p$, we have $t^{p^{l - 1}} \in Z( \tilde{H} )$. Let~$\tilde{\chi}$ be 
as in Lemma~\ref{TildeDIsStrictlyRegular}. Then \cite[Lemma~$4.1.3$]{HL25} 
yields $\sigma_{\tilde{\chi}}^{[a + a_1]}( t ) = \omega_{\Lambda}( I )$ with 
$I = \emptyset$, unless $\varepsilon = -1$,~$n$ is odd, and 
$p \equiv -1\,\,(\mbox{\rm mod}\,\,4)$, in which case $I = [a, l - 1]$. In view 
of \cite[Remark~$2.3.3$]{HL25} and 
Equation~(\ref{ApproachEqI}) in Lemma~\ref{Approach}, this gives 
$W( \bB ) \cong W_{D}( I )$.

Our assertion follows from this and \cite[Lemma~$2.4.2$(c)]{HL25}.
\end{proof}

\addtocounter{subsection}{1}
\subsection{Computing the invariants in case $h = 2$}
We now assume that the minimal polynomial of $t$ has exactly two irreducible 
factors. Recall the following facts from Lemma~\ref{AnalysingTheDeltas}(d). We
have 
$n = n_1 + n_2$ with $n_1 = m_1p^{a_1}$, $n_2 = m_2p^{a_2}$ for some 
non-negative integers $a_1 \geq a_2$, and $p \nmid m_1m_2$. Also, $c' = 0$ if 
$a_1 > a_2$, and $c' = c - a_1$ if $a_1 = a_2 > 0$. Moreover, $|\bar{D}| = p^l$ 
with $l = a + a_1 - c'$.

Recall that $V = V_1 \oplus V_2$ is the primary decomposition of~$V$ with 
respect to~$t$, where $\dim(V_j) = n_j = m_jp^{a_j}$ for $j = 1, 2$. Recall also 
from 
Notation~\ref{HypoI}(ii) that $\bV = \mathbb{F}^n$ is the natural vector space 
for~$\tilde{\bG}$. For $i = 1, 2$, let~$\bV_i$ denote the $\mathbb{F}$-span 
of~$V_i$, and let $\tilde{\bG}_i$ denote the subgroup of~$\tilde{\bG}$ induced
on~$\bV_i$. Then~$\tilde{\bG}_i$ is $F$-stable and
$\tilde{\bG}_i^F = \tilde{G}_i$ for $i = 1, 2$.

\addtocounter{thm}{1}
\begin{prop}
\label{MainCorCase2}
Suppose that the minimal polynomial of~$t$ has exactly two irreducible factors. 

Then $W( \bar{\bB} ) = W_{\bar{D}}( I )$, with 
$I \subseteq \{ 1, \ldots, l - 1 \}$ an interval. 
If $l = 1$, or if $\varepsilon = 1$, or if $a_1 = a_2 = 0$ 
and $c' = c$, then $I = \emptyset$, i.e.\ $W( \bar{\bB} ) \cong k$. 

Suppose in the following that $l \geq 2$, that $\varepsilon = -1$, and that 
$c' < c$ if $a_1 = a_2 = 0$. Then~$I$ is non-empty exactly in the following 
cases.

{\rm (a)} We have $a_1 > a_2$, $p \equiv -1\,\,(\mbox{\rm mod}\,\,4)$ and at 
least one of~$n_1$,~$n_2$ is odd. Then
$$I = 
\begin{cases} 
[a, l - 1], & \text{\ if\ } n_1 \text{\ odd and\ } (n_2 \text{\ even or\ } a_2 = 0); \\
[a + a_1 - a_2, l - 1], & \text{\ if\ } n_1 \text{\ even,\ } n_2 \text{\ odd, and\ } a_2 > 0; \\
[a, l - a_2 - 1], & \text{\ if\ } n_1 \text{\ and\ } n_2 \text{\ odd}.
\end{cases}
$$

{\rm (b)} We have $a_1 = a_2 > 0$, $p \equiv -1\,\,(\mbox{\rm mod}\,\,4)$, and~$n$ 
is odd. Then
$$I = [a - c', l - 1].
$$

{\rm (c)} We have $a_1 = a_2 = 0$, $c' < c$ and $n_1, n_2$ odd. Then
$$I = \{ c - c' \}.$$
\end{prop}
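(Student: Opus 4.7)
\medskip

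\noindent\emph{Proof plan.}
The strategy is to reduce, via Lemma~\ref{Approach}, the computation of $W(\bar{\bB})$ to that of the sign sequence $\omega_{\tilde{\bH}}^{[l]}(t)$ of length $l = a + a_1 - c'$, and then to exploit the factorisation of $\tilde{H}$ supplied by Lemma~\ref{PropertiesOfDPrime}(c). Concretely, I would take the irreducible character $\tilde{\chi} = \varepsilon_{\tilde{\bT}}\varepsilon_{\tilde{\bH}}R_{\tilde{\bT}}^{\tilde{\bH}}(\theta) \in \Irr(\tilde{\bd})$ from Lemma~\ref{TildeDIsStrictlyRegular}, whose torus $\tilde{T} = \tilde{T}_1 \times \tilde{T}_2$ splits along the factorisation $\tilde{H} = \tilde{G}_1 \times \tilde{G}_2$ given in Lemma~\ref{PropertiesOfDPrime}(c), and read off $\omega_{\tilde{\bH}}(t^{p^i}) = \omega_{\tilde{\bG}_1}(t_1^{p^i}) \cdot \omega_{\tilde{\bG}_2}(t_2^{p^i})$ for each $0 \leq i < l$. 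By Lemma~\ref{DeterminantLem}(e), $\tilde{G}_j \cong \GL^\varepsilon_{n_j}(q)$ with $|Z(\tilde{C}_j)|_p = p^{a+a_j}$, so each factor sign is computable from the single-block analysis already carried out in \cite[Lemma~$4.1.3$]{HL25}.

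\medskip

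The trivial cases are dispatched first. If $l = 1$, the subset $I \subseteq \{1,\ldots,l-1\}$ is forced to be empty. If $\varepsilon = 1$, Lemma~\ref{DeterminantLem}(b) and \cite[Lemma~$4.1.3$]{HL25} give $\omega_{\tilde{\bG}_j}(t_j^{p^i}) = +1$ for each $j$ and every $i$, so $I = \emptyset$. The remaining trivial case $a_1 = a_2 = 0$ with $c' = c$ is precisely the situation in which Lemma~\ref{PropertiesOfDPrime}(c) does \emph{not} give the product decomposition ($\tilde{H} = \tilde{G}$); here one observes that $|t| = p^a$, $\bar{t}$ lies in the image of $O_p(Z)$ mod $Y$, so $\bar{D} \leq Z(\bar{G})$ and $W(\bar{\bB}) \cong k$ by \cite[Lemma~$3.6$(b)]{HL24}.

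\medskip

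For $\varepsilon = -1$, $l \geq 2$, I would treat the three subcases of Lemma~\ref{AnalysingTheDeltas}(d) separately using the same recipe but tracking the index shifts carefully. In each factor, Lemma~\ref{DeterminantLem}(d) and \cite[Lemma~$4.1.3$]{HL25} tell us that $\omega_{\tilde{\bG}_j}(t_j^{p^i}) = -1$ happens precisely when $p \equiv -1 \pmod 4$, $n_j$ is odd, and the power $t_j^{p^i}$ still has order exceeding $p^a$ (equivalently $i < a_j$), yielding a flip on the top $a_j$ indices of the sign sequence of $t_j$. In case~(a), $c' = 0$ and $l = a + a_1$; translating the top-$a_1$ flip from $t_1$ and the top-$a_2$ flip from $t_2$ into the common index range $[0, l-1]$ for $t$ gives the intervals $[a, l-1]$, $[a + a_1 - a_2, l-1]$ and their symmetric difference $[a, l - a_2 - 1]$ according to the stated parities. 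Case~(b) ($a_1 = a_2 > 0$, $c' = c - a_1$) and Case~(c) ($a_1 = a_2 = 0$, $c' < c$, working inside $\tilde{H} = \tilde{G}$ via Lemma~\ref{PropertiesOfDPrime}(c)) are handled analogously, the parity controlling the overall sign of the product coming from $n = n_1 + n_2$.

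\medskip

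The main obstacle will be the bookkeeping in Case~(a): the three sub-intervals must be obtained as the product of two shifted step functions, and verifying that the endpoints match $[a,l-1]$, $[a+a_1-a_2, l-1]$ and $[a, l-a_2-1]$ requires care when $a_2 = 0$ versus $a_2 > 0$ and when exactly one of $n_1$, $n_2$ is even. A secondary obstacle is Case~(c), where $\tilde{H}$ does not split as a direct product, so the single-index set $I = \{c - c'\}$ must be computed directly from the sign contribution of $\tilde{\chi}$ evaluated at $t^{p^{c-c'}}$, using that $t^{p^{c-c'-1}}$ still has a non-rational eigenvalue on $V$ but $t^{p^{c-c'}}$ becomes central; once this is pinned down the remaining indices yield $+1$ automatically, since the underlying factors $\tilde{G}_j$ have $a_j = 0$.
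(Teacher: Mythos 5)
Your overall plan — reducing to $\omega_{\tilde{\bH}}^{[l]}(t)$ via Lemma~\ref{Approach}, splitting along $\tilde{H} = \tilde{G}_1 \times \tilde{G}_2$, computing each factor's step function via \cite[Lemma~$4.1.3$]{HL25}, and taking a symmetric difference — is precisely the paper's approach, and your bookkeeping in case~(a) and case~(b) is correct.

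However, there is a concrete error in your treatment of the trivial case $a_1 = a_2 = 0$, $c' = c$. You claim this is ``precisely the situation in which Lemma~\ref{PropertiesOfDPrime}(c) does not give the product decomposition ($\tilde{H}=\tilde{G}$)''. That is backwards: Lemma~\ref{PropertiesOfDPrime}(c) gives $\tilde{H}=\tilde{G}$ exactly when $a_1=a_2=0$ \emph{and} $c' < c$, i.e.\ your case~(c), not $c'=c$. When $c'=c$ one \emph{does} have $\tilde{H} = \tilde{G}_1 \times \tilde{G}_2$, which together with $\tilde{C}=\tilde{C}_1\times\tilde{C}_2=\tilde{G}_1\times\tilde{G}_2$ (since $a_1=a_2=0$) yields $\tilde{H}=\tilde{C}$, hence $H=C$, hence $C_{\bar G}(\bar E)=C_{\bar G}(\bar D)$; that is the hypothesis under which \cite[Lemma~$3.6$(b)]{HL24} applies here. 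Your alternative chain ``$\bar t$ lies in the image of $O_p(Z)$ mod $Y$, so $\bar D \le Z(\bar G)$'' is not correct: since $h=2$, the element $t$ has two distinct eigenvalues and so $t\notin Z(\tilde G)\supseteq Y$, whence $\bar t \notin$ image of $O_p(Z)$ and $\bar D$ is in general non-central in $\bar G$. Separately, in your discussion of case~(c) the phrase ``$t^{p^{c-c'-1}}$ still has a non-rational eigenvalue'' is wrong: with $a_1=a_2=0$, all eigenvalues of all powers of $t$ are rational; the relevant dichotomy is whether the power is scalar (i.e.\ lies in $Z$), not whether its eigenvalues are rational. The actual computation there needs the extra sign $\varepsilon_{\tilde\bG}\varepsilon_{\tilde\bG_1}\varepsilon_{\tilde\bG_2} = -1$ that appears once the centralizer of $t^{p^{l-j}}$ drops from $\tilde\bG$ to $\tilde\bG_1\times\tilde\bG_2$, which is what produces the single flip at position $c-c'$; your sketch gestures at the right phenomenon but with incorrect reasoning.
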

\begin{proof}
If $l = 1$, then $W( \bar{\bB} ) \cong k$ by \cite[Lemma~$3.6$(b)]{HL24}.
Suppose now that $a_1 = a_2 = 0$ and that $c' = c$. Then 
$\tilde{H} = \tilde{G}_1 \times \tilde{G}_2$ by
Lemma~\ref{PropertiesOfDPrime}(c). On the other hand, $\tilde{C} = 
\tilde{C}_{1} \times \tilde{C}_{2} = \tilde{G}_1 \times \tilde{G}_2$ since 
$a_1 = a_2 = 0$. It follows that $\tilde{H} = \tilde{C}$ and thus $H = C$. 
This implies $C_{\bar{G}}( \bar{E} ) = \bar{H} = 
\bar{C} \leq C_{\bar{G}}( \bar{D} ) \leq C_{\bar{G}}( \bar{E} )$, the first 
equality arising from Lemma~\ref{PropertiesOfDPrime}(f).
Our claim follows from \cite[Lemma~$3.6$(b)]{HL24}.

To continue, assume that $l \geq 2$ and that $c' < c$ if $a_1 = a_2 = 0$. 
Set $\Lambda := \{ 0, 1, \ldots , l - 1 \}$. To determine
$\omega_{\tilde{\bH}}^{[l]}( t )$, which yields $W( \bar{\bB} )$ by 
Lemma~\ref{Approach} and the remarks preceding it, we first determine 
$\omega_{\tilde{\bG}_j}^{[l]}( t_j )$ for $j = 1, 2$. Recall that 
$|t_j| = p^{a+a_j}$ for $j = 1, 2$; see Lemma~\ref{AnalysingTheDeltas}(d)(vii). 
Thus
$|t_2^{p^{l-1}}| \leq |t_1^{p^{l-1}}| = p^{c' + 1} \leq p^a$, where the last
inequality arises from Lemma~\ref{AnalysingTheDeltas}(d)(i)(ii), respectively 
our hypothesis $c' < c$ if $a_1 = a_2 = 0$. Hence
$t_j^{p^{l-1}} \in Z( \tilde{G}_j )$ for $j = 1, 2$, and 
\cite[Lemmas~$2.5.7$,~$4.1.3$]{HL25} yield
$$\omega_{\tilde{\bG}_j}^{[l]}( t_j ) = \omega_{\Lambda}( \mathbf{1}_{I_j} )$$
with intervals $I_j \subseteq \Lambda \setminus \{ 0 \}$, determined as follows.
If~$n_j$ is odd, $\varepsilon = -1$ and $p \equiv -1\,\,(\mbox{\rm mod}\,\,4)$,
then $$I_j = [a + a_1 - c' - a_j, l - 1];$$
otherwise $I_j = \emptyset$. Notice that $I_2 = \emptyset$ if $a_2 = 0$.

If $a_1 > a_2$ or if $a_1 = a_2$ and $c' = c - a_1$, we have
$\tilde{\bH} = \tilde{\bG}_1 \times \tilde{\bG}_2$ by
Lemma~\ref{PropertiesOfDPrime}(c), and thus 
\begin{equation}
\label{FactorizationOfOmega}
\omega_{\tilde{\bH}}^{[l]}( t ) = 
\omega_{\tilde{\bG}_1}^{[l]}( t_1 ) \omega_{\tilde{\bG}_2}^{[l]}( t_2 ),
\end{equation}
where the two $l$-tuples on the right hand side are multiplied component-wise.

We continue the proof under the assumption that~(\ref{FactorizationOfOmega})
holds. Let $I := I_1 \diamond I_2$ denote the symmetric difference of $I_1$ 
and~$I_2$.
Then $\omega_{\tilde{\bG}_1}^{[l]}( t_1 )\omega_{\tilde{\bG}_2}^{[l]}( t_2 )
= \omega_{\Lambda}( I )$; see \cite[Subsection~$2.2$]{HL25}.
This yields the assertions in~(a) and~(b).

Now assume that~(\ref{FactorizationOfOmega}) is not satisfied. Then
$a_1 = a_2 = 0$ and $c' < c$. In particular, $D = \langle t \rangle$ and 
$|t| = p^a$ by Lemma~\ref{AnalysingTheDeltas}(d)(iii). Put $l_1 := c - c'$ and 
$l_2 := l - l_1$. The $l$-tuple
$\omega_{\tilde{\bH}}^{[l]}( t )$ contains the values of $\omega_{\tilde{\bH}}$
at the elements $t^{p^{l-1}}, t^{p^{l-2}}, \ldots , t^p, t$. As~$j$ runs 
from~$1$ to~$l$, the order of $t^{p^{l-j}}$ runs from $p^{c' + 1}$ to $p^{a}$. 
For $j = l_1$ we get $|t^{p^{l-j}}| = p^{c}$.
As $O_p( Z ) \leq D = \langle t \rangle$, we have
$t^{p^{l-j}} \in Z$ exactly for $j = 1, \ldots , l_1$. 
Thus 
\begin{equation}
\label{FactorizationOfCentralizer}
C_{\tilde{\bH}}( t^{p^{l-j}} ) = 
\begin{cases}
{\tilde{\bH}}, & \text{\rm for\ } 1 \leq j \leq l_1; \\
C_{\tilde{\bG}_1}( t_1^{p^{l-j}} ) \times C_{\tilde{\bG}_2}( t_2^{p^{l-j}} ), & 
\text{\rm for\ } l_1 + 1 \leq j \leq l.
\end{cases}
\end{equation}
By Lemma~\ref{AnalysingTheDeltas}(c) we have $\tilde{\bH} = \tilde{\bG}$. Hence
\begin{equation}
\label{PartialFactorizationOfOmega}
\omega_{\tilde{\bH}}( t^{p^{l-j}} ) = 
\begin{cases}
1, & \text{\rm for\ } 1 \leq j \leq l_1; \\
\varepsilon_{\tilde{\bG}} \varepsilon_{\tilde{\bG}_1} \varepsilon_{\tilde{\bG}_2}
\omega_{\tilde{\bG}_1}( t_1^{p^{l-j}} ) \omega_{\tilde{\bG}_2}( t_2^{p^{l-j}} ),
& \text{\rm for\ } l_1 + 1 \leq j \leq l.
\end{cases}
\end{equation}
It follows that $\omega_{\tilde{\bH}}^{[l]}( t )$ is the concatenation of the 
all-$1$-vector $(1, \ldots , 1)$ of length $l_1$ with
$$\varepsilon_{\tilde{\bG}} \varepsilon_{\tilde{\bG}_1} \varepsilon_{\tilde{\bG}_2}
\omega_{\tilde{\bG}_1}^{[l_2]}( t_1 ) \omega_{\tilde{\bG}_2}^{[l_2]}( t_2 ).$$
By \cite[Example~$2.5.5$]{HL25}, we have
$$\varepsilon_{\tilde{\bG}} \varepsilon_{\tilde{\bG}_1} \varepsilon_{\tilde{\bG}_2} =
\begin{cases}
-1, & \text{\rm if}\  \varepsilon = -1 \text{\rm\ and\ } n_1, n_2 \text{\rm\ odd}; \\
 1, & \text{\rm otherwise}.
\end{cases}
$$
Suppose in addition that $\varepsilon = -1$ and that $n_1$ and $n_2$ are odd.
Then $I_1 = I_2$, and so 
$\omega_{\tilde{\bG}_1}( t_1^{p^{l-j}} ) \omega_{\tilde{\bG}_2}( t_2^{p^{l-j}} ) = 1$
for all $1 \leq j \leq l$.
Thus 
$$\omega_{\tilde{\bH}}^{[l]}( t ) = (1, \ldots , 1, -1, \ldots , -1),$$
where the first entry $-1$ is at position $l_1 + 1$. Hence
$\omega_{\tilde{\bH}}^{[l]}( t ) = \omega_{\Lambda}( \{ l_1 \} )$
by \cite[Lemma~$2.2.1$]{HL25}. This yields the instance listed in~(c).
If $\varepsilon = 1$ or $\varepsilon = -1$ and at least one of $n_1$,~$n_2$ is even, then 
$\varepsilon_{\tilde{\bG}} \varepsilon_{\tilde{\bG}_1} \varepsilon_{\tilde{\bG}_2} = 1$,
and thus~(\ref{FactorizationOfOmega}) holds by~(\ref{FactorizationOfCentralizer})
and~(\ref{PartialFactorizationOfOmega}), and the fact that
$t_i^{p^{l-j}} \in Z( \tilde{G}_i )$ for $i = 1, 2$ and all $1 \leq j \leq l_1$.
This contradiction concludes our proof.
\end{proof}

\addtocounter{subsection}{1}
\subsection{Computing the invariants in case $h = 3$}
We now assume that the minimal polynomial of $t$ has exactly three irreducible
factors. Recall the following facts from Lemma~\ref{AnalysingTheDeltas}(e). We 
have $n = n_1 + n_2 + n_3$ with $n_1 = m_1p^{a_1}$ 
for a non-negative integer~$a_1$, and $p \nmid m_1n_2n_3$. Moreover, $c' = 0$, 
and $|\bar{D}| = p^l$ with $l = a + a_1$. Notice that $l \geq 2$ if $a_1 > 0$.

Recall that $V = V_1 \oplus V_2 \oplus V_3$ is the primary decomposition of~$V$ 
with respect to~$t$, where $\dim(V_j) = n_j$ for $j = 1, 2, 3$. Recall also from
Notation~\ref{HypoI}(ii) that $\bV = \mathbb{F}^n$ is the natural vector space 
for~$\tilde{\bG}$. For $i = 1, 2, 3$, let~$\bV_i$ denote the $\mathbb{F}$-span 
of~$V_i$, and let~$\tilde{\bG}_i$ denote the subgroup of~$\tilde{\bG}$ induced 
on~$\bV_i$. Put $\bV_{2,3} := \bV_2 \oplus \bV_3$, and let $\tilde{\bG}_{2,3}$ 
denote the subgroup of~$\tilde{\bG}$ induced on~$\bV_{2,3}$. 
Then~$\tilde{\bG}_i$ is $F$-stable, and $\tilde{\bG}_i^F = \tilde{G}_i$ for 
$i = 1, 2, 3$. Similarly, $\tilde{\bG}_{2,3}$ is $F$-stable and 
$\tilde{\bG}_{2,3}^F = \tilde{G}_{2,3}$ in the notation of 
Lemma~\ref{PropertiesOfDPrime}(c)(iii). 

\addtocounter{thm}{1}
\begin{prop}
\label{MainCorCase3}
Suppose that the minimal polynomial of~$t$ has exactly three irreducible 
factors. Then $W( \bar{\bB} ) = W_{D}( A )$, with
$A \subseteq \{ 1, \ldots, l - 1 \}$ a union of two intervals. If $a_1 = 0$ or 
if $\varepsilon = 1$, then $A = \emptyset$, i.e.\ $W( \bar{\bB} ) \cong k$. 

Suppose in the following that $a_1 > 0$ and $\varepsilon = -1$. Then~$A$ is
non-empty exactly in the following cases.

{\rm (a)} At least one of $n_2$, $n_3$ is even, $n_1$ is odd and 
$p \equiv -1\,\,(\mbox{\rm mod}\,\,4)$. Then
$$A = [a, l - 1].$$

{\rm (b)} Each of $n_2$ and $n_3$ is odd and $a_1 \geq a$. Then

$$A = 
\begin{cases}
\{ a_1 \}, & \text{if\ } n_1 \text{\ is even or\ } p \equiv 1\,\,(\mbox{\rm mod}\,\,4); \\
[a, l - 1] \setminus \{ a_1 \}, & 
\text{if\ } n_1 \text{\ is odd and\ } p \equiv -1\,\,(\mbox{\rm mod}\,\,4).
\end{cases}
$$

{\rm (c)} Each of $n_2$ and $n_3$ is odd and $a_1 < a$. Then
$$A = 
\begin{cases}
\{ a_1 \}, & \text{if\ } n_1 \text{\ is even or\ } p \equiv 1\,\,(\mbox{\rm mod}\,\,4); \\
\{ a_1 \} \cup [a, l - 1], & \text{if\ } n_1 \text{\ is odd and\ } p \equiv -1\,\,(\mbox{\rm mod}\,\,4).
\end{cases}
$$
\end{prop}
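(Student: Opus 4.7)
The plan is to compute the sign sequence $\omega_{\tilde\bH}^{[l]}(t)$, with $l = a + a_1$, and to read off the set~$A$ from the identity $\omega_{\tilde\bH}^{[l]}(t) = \omega_\Lambda(\mathbf{1}_A)$, so that $W(\bar\bB) = W_{\bar D}(A)$ by Lemma~\ref{Approach} together with \cite[Remark~2.3.3]{HL25}. I first dispose of the easy case $a_1 = 0$. Here Lemma~\ref{AnalysingTheDeltas}(e) gives $a_2 = a_3 = 0$, and Lemma~\ref{PropertiesOfDPrime}(d) yields $\tilde H = \tilde G_1 \times \tilde G_2 \times \tilde G_3$; since each $a_j = 0$ forces $\tilde C_j = \tilde G_j$, this coincides with $\tilde C$, hence $H = C$ and $\bar H = \bar C = C_{\bar G}(\bar D)$. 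Then $\bar D$ is central in $\bar H$, so $\bar\bd$ is a nilpotent block with central defect group and \cite[Lemma~3.6(b)]{HL24} gives $W(\bar\bB) = W(\bar\bd) \cong k$, i.e.\ $A = \emptyset$. Assume from now on that $a_1 > 0$; the conclusion $A = \emptyset$ for $\varepsilon = 1$ will fall out of the computations below.

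By Lemma~\ref{PropertiesOfDPrime}(d), $\tilde\bH = \tilde\bG_1 \times \tilde\bG_{2,3}$, and correspondingly $t = t_1 \cdot (t_2 t_3)$. Since $\tilde\bH$ is literally the direct product, the multiplicativity of~$\omega$ yields
\[
\omega_{\tilde\bH}^{[l]}(t) = \omega_{\tilde\bG_1}^{[l]}(t_1)\,\omega_{\tilde\bG_{2,3}}^{[l]}(t_2 t_3)
\]
entrywise, with no extra global sign, so the two factors combine via symmetric difference of the associated sets. For the first factor $|t_1| = p^{a+a_1}$ by Lemma~\ref{DeterminantLem}(a), and $t_1^{p^{l-1}} \in Z(\tilde G_1)$. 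Hence \cite[Lemmas~2.5.7,~4.1.3]{HL25} apply exactly as in the proofs of Propositions~\ref{Case1Prop} and~\ref{MainCorCase2} and give $\omega_{\tilde\bG_1}^{[l]}(t_1) = \omega_\Lambda(\mathbf{1}_{A_1})$ with $A_1 = [a, l-1]$ when $\varepsilon = -1$, $n_1 = m_1 p^{a_1}$ is odd, and $p \equiv -1\,\,(\mbox{\rm mod}\,\,4)$, and $A_1 = \emptyset$ otherwise.

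The main work is the second factor $\omega_{\tilde\bG_{2,3}}^{[l]}(s)$, where $s := t_2 t_3 \in \tilde\bG_{2,3}$ has the two distinct eigenvalues $\xi_2, \xi_3 \in \mathbb{F}_{q^\delta}^*$ on $V_{2,3}$, both of $p$-power order dividing~$p^a$. The key step is the order identity $|\xi_2 \xi_3^{-1}| = p^a$. Indeed, $\xi_1^{n_1}\xi_2^{n_2}\xi_3^{n_3} = \det(t) = 1$ together with $|\xi_1^{n_1}| = p^a$ (Lemma~\ref{DeterminantLem}(d)) gives $|\xi_2^{n_2}\xi_3^{n_3}| = p^a$. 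If $|\xi_3| < |\xi_2|$, then $|\xi_2| = p^a$ and the identity is immediate; if $|\xi_2| = |\xi_3| = p^a$, write $\xi_2 = \zeta^u$, $\xi_3 = \zeta^v$ in a cyclic group of order~$p^a$ with $p \nmid uv$, and note that $u \equiv v \pmod{p}$ would force $u n_2 + v n_3 \equiv u(n_2 + n_3) \equiv 0 \pmod{p}$ (since $p \mid n_2 + n_3$ by $p \mid n$ and $a_1 > 0$), contradicting $|\xi_2^{n_2}\xi_3^{n_3}| = p^a$. With this identity, $s^{p^{l-1-k}} = 1$ for $k \leq a_1 - 1$, while for $k \geq a_1$ the element $s^{p^{l-1-k}}$ has two distinct eigenvalues and centralizer $\tilde\bG_2 \times \tilde\bG_3$ in $\tilde\bG_{2,3}$. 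Consequently $\omega_{\tilde\bG_{2,3}}^{[l]}(s)$ equals $+1$ on the first $a_1$ positions and then equals $\varepsilon_{\tilde\bG_{2,3}}\varepsilon_{\tilde\bG_2}\varepsilon_{\tilde\bG_3}$, which by \cite[Example~2.5.5]{HL25} is $-1$ iff $\varepsilon = -1$ and both $n_2, n_3$ are odd. Hence $\omega_{\tilde\bG_{2,3}}^{[l]}(s) = \omega_\Lambda(\mathbf{1}_{A_{2,3}})$ with $A_{2,3} = \{a_1\}$ in that situation and $A_{2,3} = \emptyset$ otherwise.

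Setting $A := A_1 \triangle A_{2,3}$ and splitting on the parities of $n_1$, $n_2$, $n_3$, the residue of $p$ modulo~$4$, and whether $a_1 \in [a, l-1]$ (equivalently $a_1 \geq a$, since $l - 1 \geq a_1$ always), one recovers cases~(a),~(b),~(c) of the statement, together with $A = \emptyset$ whenever $\varepsilon = 1$ or neither of the two conditions producing $A_1$ or $A_{2,3}$ is met. The main obstacle is the computation of $\omega_{\tilde\bG_{2,3}}^{[l]}(s)$; pinning down that the sign change occurs at position exactly~$a_1$ hinges on the order identity $|\xi_2 \xi_3^{-1}| = p^a$, which has no direct analogue in Propositions~\ref{Case1Prop} or~\ref{MainCorCase2} and crucially couples the global constraint $\det(t) = 1$ with the congruence $p \mid n_2 + n_3$.
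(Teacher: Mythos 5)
Your treatment of the main case $a_1 > 0$ follows the same route as the paper: factor $\tilde\bH = \tilde\bG_1 \times \tilde\bG_{2,3}$ by Lemma~\ref{PropertiesOfDPrime}(d), compute the two sign sequences separately, and combine via symmetric difference. The explicit proof that $|\xi_2\xi_3^{-1}| = p^a$ (using $\det(t) = 1$, $|\det(t_1)| = p^a$, and $p \mid n_2 + n_3$) is a genuine improvement in exposition: the paper merely cites Lemma~\ref{AnalysingTheDeltas}(e) for the position of the sign change at index $a_1$, whereas you pin down exactly why the eigenvalues of $t_{2,3}$ first separate at that level. That part of the argument is sound.

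The case $a_1 = 0$, however, contains a gap. You pass from $\tilde H = \tilde C$ (hence $H = C$) to the assertion $\bar H = \bar C = C_{\bar G}(\bar D)$, and then invoke \cite[Lemma~3.6(b)]{HL24}. But the equality $\overline{C_G(D)} = C_{\bar G}(\bar D)$ is not automatic: by Lemma~\ref{CentralizersModZ} the inclusion $\overline{C_G(D)} \leq C_{\bar G}(\bar D)$ can be strict, of $p$-power index. In fact it is strict precisely in the sub-case $n_1 = n_2 = n_3$ (forcing $p = 3$, $a = 1$, $n = 3n_1$), as Example~\ref{sec:Analysis}.$17$ of the paper exhibits concretely: there $|C_G(E)/Y| = 36 = |C_{\bar G}(\bar E)|/3$. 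So in that sub-case neither Lemma~\ref{PropertiesOfDPrime}(f) nor the Brauer-correspondence chain you rely on is available, and the argument breaks. The paper resolves this by splitting: when $n_1 = n_2 = n_3$ one has $|\bar D| = 3$, so $\Lambda \setminus\{0\}$ is empty and $W(\bar\bB) \cong k$ for trivial reasons; when $|\{n_1,n_2,n_3\}| \geq 2$ Lemma~\ref{PropertiesOfDPrime}(f) applies and gives $\bar H = C_{\bar G}(\bar E)$, from which $\bar H \leq C_{\bar G}(\bar D) \leq C_{\bar G}(\bar E) = \bar H$ and the claim follows. You need that case distinction; without it the proof of the $a_1 = 0$ part is incomplete.
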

\begin{proof}
Suppose first that $a_1 = 0$ and $n_1 = n_2 = n_3$. Then $n = 3n_1$, and thus 
$p = 3$ and $a = 1$, as $p \nmid n_1$. In this case, $|\bar{D}| = 3$, hence 
$W( \bar{\bB} ) \cong k$ by \cite[Lemma~$3.6$(b)]{HL24}.

Suppose next that $a_1 = 0$ and $|\{ n_1, n_2, n_3 \}| \geq 2$. Then
$\tilde{H} = \tilde{G}_1 \times \tilde{G}_2 \times \tilde{G}_3$ by
Lemma~\ref{PropertiesOfDPrime}(d). Thus $\tilde{H} = \tilde{C}$ and
$H = C = C_G( D )$. In particular, $\bar{H} \leq C_{\bar{G}}( \bar{D} )$.
Now $\bar{H} = C_{\bar{G}}( \bar{E} )$ by Lemma~\ref{PropertiesOfDPrime}(f), and 
so $\bar{H} \leq C_{\bar{G}}( \bar{D} ) \leq C_{\bar{G}}( \bar{E} ) = \bar{H}$. 
Our assertion follows from \cite[Lemma~$3.6$(b)]{HL24}.

Suppose from now on that $a_1 > 0$. Set $t_{2,3} := t_2t_3$ and 
$\Lambda := \{ 0, 1, \ldots , l - 1 \}$. Since $C_{\tilde{\bH}}( t )
= C_{\tilde{\bG}_1}( t_1 ) \times C_{\tilde{\bG}_{2,3}}( t_{2,3} )$ by
Lemma~\ref{PropertiesOfDPrime}(d), we obtain 
$$\omega_{\tilde{\bH}}^{[l]}( t ) = 
\omega_{\tilde{\bG}_1}^{[l]}( t_1 ) 
\omega_{\tilde{\bG}_{2,3}}^{[l]}( t_{2,3} ),$$
where the multiplication of the $l$-tuples on the right hand side is defined
component-wise. 

By \cite[Corollary~$4.1.4$]{HL25}
we have
$$\omega_{\tilde{\bG}_1}^{[l]}( t_1 ) = \omega_{\Lambda}( \mathbf{1}_{I_1} )$$
with $I_1 = \emptyset$ unless $n_1$ is odd, $\varepsilon = -1$ and
$p \equiv -1\,\,(\mbox{\rm mod}\,\,4)$, in which case $I_1 = [a, l - 1]$.

Let $1 \leq j \leq l$ and put $u := t_{2,3}^{p^{l-j}}$.
Lemma~\ref{AnalysingTheDeltas}(e) implies that
$C_{\tilde{\bG}_{2,3}}( u ) = \tilde{\bG}_{2,3}$ if $j \leq a_1$, and
$C_{\tilde{\bG}_{2,3}}( u ) = \tilde{\bG}_2 \times \tilde{\bG}_3$ if
$a_1 + 1 \leq j \leq a + a_1$. In the former case, 
$\omega_{\tilde{\bG}_{2,3}}( u ) = 1$. In the latter case,
$$\omega_{\tilde{\bG}_{2,3}}( u ) = 
\varepsilon_{\tilde{\bG}_{2,3}}\varepsilon_{\tilde{\bG}_{2}}\varepsilon_{\tilde{\bG}_{3}}.$$
By \cite[Example~$2.5.5$]{HL25}, we get
$$
\varepsilon_{\tilde{\bG}_{2,3}}\varepsilon_{\tilde{\bG}_{2}}\varepsilon_{\tilde{\bG}_{3}} =
\begin{cases}
-1, & \text{\rm if}\  \varepsilon = -1 \text{\rm\ and\ } n_2, n_3 \text{\rm\ odd}; \\
 1, & \text{\rm otherwise}.
\end{cases}
$$
It follows that $\omega_{\tilde{\bG}_{2,3}}^{[l]}( t_{2,3} ) = 
\omega_{\Lambda}( \mathbf{1}_{I_{2,3}} )$ with $I_{2,3} = \{ a_1 \}$,
if $n_2$ and~$n_3$ are odd and $\varepsilon = -1$, and $I_{2,3} = \emptyset$, 
otherwise.

We conclude from the considerations in \cite[Subsection~$2.2$]{HL25} that
$$\omega_{\tilde{\bG}_1}^{[l]}( t_1 ) \omega_{\tilde{\bG}_{2,3}}^{[l]}( t_{2,3} ) =
\omega_{\Lambda}^{[l]}( \mathbf{1}_{A} )$$ 
with $A = I_1 \diamond I_{2,3}$, 
the symmetric difference of $I_1$ and~$I_{2,3}$. This yields our assertions.
\end{proof}

We record a specific corollary of the above results. 
If $\tilde{T}$ is a maximal torus of~$\tilde{G}$ of type~$\pi$,
we write $T_{\pi} := \tilde{T} \cap G$. As every maximal torus of
$\SL_n^\varepsilon(q)$ is of this form, this yields a labelling of
the maximal tori of $\SL_n^\varepsilon(q)$ (up to conjugation) by 
partitions of~$n$.

\begin{cor}
\label{SpecialCases}
Suppose that $p = 3$ and $n \in \{ 3, 6, 9 \}$ or that $p = 5$ and $n = 5$. 
Let~$\bB$ be a $p$-block of 
$G = \SL_n^\varepsilon( q )$ with a non-trivial cyclic defect group~$D$. 
If $n = 9$ assume that $D \not\leq Z$ and that $a \geq 2$. 
Then the following statements hold.

{\rm (a)} We have $|D| = p$ or $|D| = p^a$. Also, 
$W( \bB ) \cong k$ or $W( \bB ) \cong W_D( \{ 1 \} )$. The latter occurs
exactly if $\varepsilon = -1$, $|D| = 3^a$ with $a \geq 2$, and $n = 6$
In this case, we have $h = 2$ and $\{ n_1 , n_2 \} = \{ 5, 1 \}$
in the notation of {\rm Proposition~\ref{MainCorCase2}}.

{\rm (b)} Suppose that $p = 3$ and $n = 6$. If $W( \bB ) \not\cong k$, 
then~$\bB$ is strictly regular with respect to~$T_{(5,1)}$.
\end{cor}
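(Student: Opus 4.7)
The plan is to enumerate the admissible configurations of the minimal polynomial of $t$ in each of the prescribed cases. Since $\bar{G} = G$ here, we have $Y = 1$ and hence $c' = 0$; and as $D$ is cyclic, Lemma~\ref{YEqualOpZ}(f) gives $D = \langle t \rangle$ and Lemma~\ref{AnalysingTheDeltas}(f) yields $|D| = p^{a+a_1}$.

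First I would observe that for every one of the four pairs $(n, p)$ under consideration, $p$ divides $|Z| = \gcd(q - \varepsilon, n)$, so $O_p(Z)$ is nontrivial. Consequently, Lemma~\ref{AnalysingTheDeltas}(d)(i),(ii) and~(e), which all conclude $Y = O_p(Z)$, can be excluded outright. Thus the only surviving configurations are: $h = 1$ with $D \leq Z$; or $h = 2$ with $a_1 = a_2 = 0$ (case (d)(iii)), where $|D| = p^a$ and the partition $(n_1, n_2)$ has $p \nmid n_1 n_2$ and $n_1 \neq n_2$. A short inspection for each $(n, p)$ produces, respectively, the partition lists $\{(2,1)\}$, $\{(4,1),(3,2)\}$, $\{(5,1),(4,2)\}$ and $\{(8,1),(7,2),(5,4)\}$ for the $h = 2$ cases of $n = 3, 5, 6, 9$. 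The $h = 1, D \leq Z$ subcase gives $|D| = |O_p(Z)| = p$ for $n \in \{3,5,6\}$, and is forbidden for $n = 9$ by the extra hypothesis $D \not\leq Z$ (while $h = 1$ with $D \not\leq Z$ is in turn excluded for $n = 9, a \geq 2$ by the constraint $n = m_1 p^{a+a_1}$ with $a_1 \geq 1$). Hence $|D| \in \{p, p^a\}$ in all cases, establishing the first sentence of~(a).

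To compute $W(\bB)$, I apply Propositions~\ref{Case1Prop} and~\ref{MainCorCase2}. In the $h = 1, D \leq Z$ cases, Proposition~\ref{Case1Prop} gives $W(\bB) \cong k$. For the surviving $h = 2$ partitions, $c = \min\{a, (n)_p\} > 0 = c'$, so Proposition~\ref{MainCorCase2}(c) applies and produces $W(\bB) \cong W_D(\{c - c'\})$ precisely when $\varepsilon = -1$ and both $n_1, n_2$ are odd. Scanning the lists above, the only partition with both parts odd is $(n_1, n_2) = (5, 1)$ occurring for $n = 6$; in that case $c - c' = 1$, yielding $W(\bB) \cong W_D(\{1\})$, and this is nontrivial exactly when $l = a \geq 2$. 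This completes~(a). In all other cases, either the partition has an even part or $\varepsilon = 1$, forcing $W(\bB) \cong k$.

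For~(b), we are in the exceptional setup: $h = 2$, $(n_1, n_2) = (5, 1)$, $a_1 = a_2 = 0$, and $c' = 0 < 1 = c$. Lemma~\ref{PropertiesOfDPrime}(c) gives $\tilde{H} = \tilde{G}$, so $H = G$, $\bd = \bB$, and $\tilde{\bd} = \tilde{\bB}$ is a block of $\tilde{G}$. Lemma~\ref{TildeDIsStrictlyRegular} then supplies a maximal torus $\tilde{T}$ of $\tilde{G}$ of type $(5, 1)$ together with a character $\theta \in \Irr(\tilde{T})$ of $p'$-order in general position with respect to $\tilde{G}$, such that $\varepsilon_{\tilde{\bT}}\varepsilon_{\tilde{\bG}} R_{\tilde{\bT}}^{\tilde{\bG}}(\theta)$ is an irreducible character of $\tilde{\bB}$. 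Setting $T := \tilde{T} \cap G = T_{(5,1)}$, the descent of strict regularity to $\bB$ is the main technical step: one has to verify that $\theta|_T$ remains in general position with respect to $G$ and that the resulting Deligne-Lusztig character $\pm R_T^G(\theta|_T)$ is an irreducible constituent of the restriction of the above character, lying in $\bB$. I expect this to follow from the regular embedding $G \hookrightarrow \tilde{G}$ (which preserves the normalizer structure of maximal tori) together with Clifford-theoretic arguments in the vein of \cite[Lemma~$2.5.18$]{HL25}; because the partition $(5,1)$ has distinct parts, the Weyl group $W_G(T)$ coincides with $W_{\tilde{G}}(\tilde{T})$, rendering the general-position condition transparent. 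The verification of this last point is the principal obstacle in the proof.
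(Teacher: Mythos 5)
Your proof of part~(a) follows essentially the same route as the paper: reduce to $Y=\{1\}$, $c'=0$, $D=\langle t\rangle$ of order $p^{a+a_1}$, exclude $h=3$ and the $h=2$ subcases (d)(i),(ii) because they force $Y=O_p(Z)\neq\{1\}$, exclude $h=1$ with $D\not\leq Z$ via the divisibility constraint $p^{a+a_1}\mid n$, and then read off $W(\bB)$ from Propositions~\ref{Case1Prop} and~\ref{MainCorCase2}. The enumeration of admissible pairs $(n_1,n_2)$ and the observation that only $(5,1)$ at $n=6$ has both parts odd match the paper. (Your phrasing suggests $h=1$ with $D\not\leq Z$ needs excluding only for $n=9$, but the same constraint $p^{a+a_1}\mid n$ with $a,a_1\geq 1$ rules it out for $n\in\{3,5,6\}$ too, since $p^2\nmid n$ there; this is a small presentational gap, not a mathematical one.)

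Part~(b) is where your argument genuinely diverges from the paper and leaves a real gap. You propose to take the strict regularity of $\tilde{\bd}=\tilde{\bB}$ with respect to $\tilde{T}$ from Lemma~\ref{TildeDIsStrictlyRegular} and ``descend'' it to $\bB$ and $T_{(5,1)}$, but you explicitly acknowledge you cannot carry out the descent. The difficulty is not merely technical: strict regularity of a block is governed by a semisimple element $s$ in the dual group, and under the canonical epimorphism $i^*\colon\tilde{G}^*\to G^*$ a strictly regular $\tilde{s}$ need not have strictly regular image $s$ --- the centralizer $C_{G^*}(s)$ can strictly contain the image of $C_{\tilde{G}^*}(\tilde{s})$ when $\det(\tilde{s})$ is non-trivial. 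The equality of Weyl groups $W_G(T)=W_{\tilde{G}}(\tilde{T})$ you invoke concerns the action on tori and does not by itself show that the restriction $\theta|_T$ stays in general position, nor that its Deligne--Lusztig character lands in $\bB$. The paper avoids this entirely: it picks a height-zero $\chi\in\Irr(\bB)$, computes $\chi(1)_3=3^{4a+2}$ from $|G|_3 = 3^{5a+2}$, and compares against L{\"u}beck's degree tables for $\SU_6(q)$ to conclude $\chi(1)=[G:T]_{r'}/f$ with $T\in\{T_{(4,1,1)},T_{(5,1)}\}$; it then eliminates $T_{(4,1,1)}$ because its Sylow $3$-subgroup is non-cyclic and cannot contain a cyclic defect group. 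That degree-theoretic argument is what actually pins down $T_{(5,1)}$; you would need to either replace it or supply the missing descent step.
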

\begin{proof}
(a) In the setup of Notation~\ref{HypoII} we have $Y = \{ 1 \}$, hence
$\bar{\bB} = \bB$ and  $c' = 0$. Also, the group~$D$ of 
Notation~\ref{HypoII}(ii) is cyclic, a defect group of~$\bB$, and 
$|D| = |t| = p^{a+a_1}$, the latter by Lemma~\ref{AnalysingTheDeltas}(f). 
In particular, we cannot have $h = 3$, since in that case $Y = O_p( Z ) 
\neq \{ 1 \}$.

If $D \leq Z$, then $W( \bB ) \cong k$ by \cite[Lemma~$3.6$(b)]{HL24}. 
Moreover, $n \neq 9$ by hypothesis, and hence $|D| = p$. Assume then 
that~$D$ is non-central in the following. Suppose that $h = 1$. Then 
$p^{a+a_1} \mid n$ with $a_1 \geq 1$ by Lemma~\ref{AnalysingTheDeltas}(c).
As we have assumed that $a \geq 2$ if $n = 9$, this case cannot occur.

Suppose finally that $h = 2$. As~$D$ is cyclic, $c > 0$, and $|D| = p^{a+a_1}$, 
we must have $a_1 = a_2 = 0$ by Lemma~\ref{AnalysingTheDeltas}(d)(i)(ii). In
particular, $|D| = p^a$. Proposition~\ref{MainCorCase2} shows that 
$W( \bB ) \cong k$, unless $\varepsilon = -1$, $a \geq 2$, and~$n_1$ and~$n_2$ 
are odd.  If all these latter conditions are satisfied, $n = 6$ and 
$\{ n_1, n_2 \} = \{ 5, 1 \}$ since $3 \nmid n_1n_2$. Moreover,
$W( \bB ) \cong W_D( \{ c - c' \} ) = W_D( \{ 1 \} )$ by 
Proposition~\ref{MainCorCase2}(c).

(b) Suppose that $p = 3$, $n = 6$ and $W( \bB ) \not\cong k$. Without loss of
generality we may assume that $n_1 = 5$ and $n_2 = 1$. In the notation of
Lemma~\ref{DeterminantLem}(e), we have
$\tilde{C}_1 = \tilde{G}_1 = \GU_5( q )$ and
$\tilde{C}_2 = \tilde{G}_2 = \GU_1( q )$. Moreover, $D \leq \tilde{D} =
\tilde{D}_1 \times \tilde{D}_2$, where $\tilde{D}_j$ is a cyclic defect group of
a block of $\tilde{C}_j$, $j = 1, 2$. In 
particular,~$\tilde{D}$ is a Sylow $3$-subgroup of a maximal torus 
$\tilde{T} \leq \tilde{G}$ of type $(5,1)$; see \cite[Corollary~$3.6.2$]{HL25}. 

Let $\chi \in \Irr( \bB )$. Then~$\chi$ has height~$0$, and thus $\chi(1)_3 = 
3^{4a + 2}$, as $|G|_3 = 3^{5a+2}$. The character degrees of~$G$ given 
in~\cite{LL2} show that $\chi(1) = [G\colon\!T]_{r'}/f$ with $T = T_{(4,1,1)}$ 
and $f \in \{1,2\}$, or $T = T_{(5,1)}$ and $f = 1$. Hence~$\bB$ is regular
with respect to one of these tori. The former case cannot occur, as the torus 
$T_{(4,1,1)}$ does not have a cyclic Sylow $3$-subgroup. Hence~$\bB$ is strictly
regular with respect to~$T_{(5,1)}$.
\end{proof}

\begin{rem}
\label{SpecialCasesRem}
%
Suppose that $p = 3$ and $n = 9$, and that $a \geq 2$, so that $c = 2$.
Let $Y \leq Z$ with $|Y| = 3$, and let~$\bar{\bB}$ be a $3$-block of 
$\bar{G} = G/Y$ with a non-central, cyclic defect group~$\bar{D} = D/Y$. Then 
$h = 2$ and $a_1 = a_2 = 0$ by Lemma~\ref{AnalysingTheDeltas}(c)(d)(e). 

Let~$\bB$ be a block of~$G$ with defect group~$D$ dominating~$\bar{\bB}$. Then 
$D \not\leq Z$ by hypothesis on~$\bar{D}$. In particular,~$\bB$ is as in 
Corollary~\ref{SpecialCases}, so that~$D$ is cyclic of order~$3^a$. Thus 
$c' = 1$ and $|\bar{D}| = 3^{a-1}$. By Corollary~\ref{SpecialCases}(a) and 
\cite[Lemma~$2.4.2$(c)]{HL25}, we have $W( \bar{\bB} ) \cong k$. 
%
\end{rem}

\section{Synthesis}\label{sec:Synthesis}
\label{Existence} 
We now investigate which of the parameter sets exhibited in 
Section~\ref{TheSpecialLinearAndUnitaryGroups} correspond to blocks.
Throughout this section, we fix an odd prime~$p$.

Recall that we have fixed a sign $\varepsilon \in \{ -1, 1\}$. As in
Notation~\ref{HypoI}(iv), we let $\delta = 1$, if $\varepsilon = 1$, and 
$\delta = 2$, if $\varepsilon = -1$.  To construct specific examples, we will 
vary the parameters,~$r$,~$q$ and~$n$, keeping their principal significance. 
Thus~$q$ is a power of the prime~$r$, where $r \neq p$, and~$n$ is a positive 
integer.  Whenever we have chosen~$r$,~$q$ and~$n$, we adopt the corresponding 
notation introduced in Section~\ref{TheSpecialLinearAndUnitaryGroups}. 
In particular,~$\mathbb{F}$ denotes an
algebraic closure of the finite field with~$r$ elements. Moreover,
$\tilde{\bG} = \GL_n( \mathbb{F} )$, $\tilde{G} = \GL_n^{\varepsilon}( q ) 
= \tilde{\bG}^F$, $G = \SL_n^{\varepsilon}(q) = \tilde{\bG}^F$ and 
$Z = Z( G )$, where~$F$ is as in Notation~\ref{HypoI}(iii). These definitions 
are in accordance with Notation~\ref{HypoI}, except that we also allow $n = 1$ 
here. Furthermore, we put 
$\bG^* := \tilde{\bG}/Z( \tilde{\bG} ) = \PGL_n( \mathbb{F} )$,
with the Steinberg morphism induced from the one on~$\tilde{\bG}$. Then
$G^* = \tilde{G}/Z( \tilde{G} ) = \PGL_n( q )$. Note that the inclusion
$i: \bG \rightarrow \tilde{\bG}$ is a regular embedding and that the dual
epimorphism $i^* : \tilde{\bG}^* \rightarrow \bG^*$ is just the canonical
map, if we identify~$\tilde{\bG}$ with its dual group~$\tilde{\bG}^*$.
Our principal aim is to prove Theorem~\ref{MainTheoremSLNSUNII}.

We begin by constructing suitable prime powers~$q$.

\begin{lem}
\label{ExistenceOfQ}
Let~$a$ be a positive integer. Then there is a prime~$r$ and a power~$q$ of~$r$ 
such that $p^a \mid q - \varepsilon$ and $p^{a+1} \nmid q - \varepsilon$. If
$p^a = 3$, there is such a~$q$ with $q > 2$.
\end{lem}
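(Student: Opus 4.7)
The plan is to take $q$ to be a suitable prime $r$ itself, chosen via Dirichlet's theorem on primes in arithmetic progressions. Specifically, I want a prime $r$ with $r \equiv \varepsilon \pmod{p^a}$ but $r \not\equiv \varepsilon \pmod{p^{a+1}}$; then $q := r$ automatically satisfies $p^a \mid q - \varepsilon$ and $p^{a+1} \nmid q - \varepsilon$.

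The residue classes modulo $p^{a+1}$ that reduce to $\varepsilon$ modulo $p^a$ are precisely
\[
\varepsilon + kp^a \pmod{p^{a+1}}, \quad k = 0, 1, \ldots, p - 1.
\]
Each of these is coprime to $p^{a+1}$ since $\varepsilon = \pm 1$ is coprime to $p$, so by Dirichlet's theorem each contains infinitely many primes. Since $p$ is odd we have $p - 1 \geq 2$, so in particular there are infinitely many primes $r$ lying in one of the $p-1$ classes with $k \neq 0$, and any such $r$ gives the desired~$q = r$.

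For the auxiliary claim in the case $p^a = 3$, the Dirichlet argument still produces infinitely many admissible primes $r$, so I may simply discard the finitely many small ones and ensure $r > 2$. Explicitly one can verify that $r = 7$ works when $\varepsilon = 1$ (indeed $7 \equiv 1 \pmod 3$ and $7 \not\equiv 1 \pmod 9$), and $r = 5$ works when $\varepsilon = -1$ (indeed $5 \equiv -1 \pmod 3$ and $5 \not\equiv -1 \pmod 9$), in each case giving $q = r > 2$.

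There is no real obstacle: the statement is a textbook consequence of Dirichlet's theorem once one observes that the residue classes in question are coprime to the modulus. The only mild subtlety is the side condition $q > 2$ in the case $p^a = 3$, which is handled by the fact that Dirichlet yields infinitely many primes, so the smallest one (namely $r = 2$ in the problematic case) can be avoided.
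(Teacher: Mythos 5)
Your proof is correct, but it takes a slightly different route from the paper's. You invoke Dirichlet directly for the modulus $p^{a+1}$: pick a prime $r$ in a residue class that is $\varepsilon$ modulo $p^a$ but not modulo $p^{a+1}$, and set $q = r$. The paper instead applies Dirichlet only at the modulus $p^2$, choosing a prime $r$ with $p \mid r - \varepsilon$ but $p^2 \nmid r - \varepsilon$, and then takes $q := r^{p^{a-1}}$; the exact $p$-adic valuation $v_p(q - \varepsilon) = a$ then follows from the (implicit) lifting-the-exponent identity $v_p(r^{p^{a-1}} - \varepsilon^{p^{a-1}}) = v_p(r - \varepsilon) + (a-1)$, using that $p$ is odd and $\varepsilon^{p^{a-1}} = \varepsilon$. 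Your approach is more direct and produces $q$ prime; the paper's trades a larger Dirichlet modulus for a one-line valuation computation and yields $q$ as a prime power $r^{p^{a-1}}$, which keeps the choice of the base prime $r$ independent of $a$. Both handle the $p^a = 3$ side condition by noting that Dirichlet supplies infinitely many primes, so $r = 2$ (hence $q = 2$) can be avoided; your explicit checks with $r = 7$ and $r = 5$ are a nice touch but not strictly needed.
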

\begin{proof}
Let~$r$ be a prime such that $p \mid r - \varepsilon$ but 
$p^2 \nmid r - \varepsilon$. Then put $q := r^{p^{a-1}}$. The last statement
is trivial.
\end{proof}

The following result considers the situation of 
Lemma~\ref{AnalysingTheDeltas}(c).

\begin{prop}
\label{Existence1Prop}
Let $a, a_1, c'$ be integers with $a, a_1$ positive 
and $0 \leq c' \leq a$. Let~$q$ satisfy the conclusion of 
{\rm Lemma~\ref{ExistenceOfQ}} with respect to~$p$ and~$a$, and put 
$n := p^{a+a_1}$. Then there is a 
block~$\bB$ of $G = \SL^\varepsilon_n( q )$ such that the following 
statements hold.

{\rm (a)} The defect group~$D$ of~$\bB$ is cyclic of order $p^{a+a_1}$.

{\rm (b)} There is a cyclic block of $\GL^\varepsilon_n( q )$ 
covering~$\bB$.

{\rm (c)} There is a subgroup $Y \leq Z$ with $|Y| = p^{c'}$, and a 
block~$\bar{\bB}$ of~$G/Y$ dominated by~$\bB$ and with defect group $D/Y$.
\end{prop}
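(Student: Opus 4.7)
The strategy is to realise the configuration of Lemma~\ref{AnalysingTheDeltas}(c) explicitly: I construct a semisimple $p$-element $t \in G$ whose minimal polynomial is irreducible of degree $p^{a_1}$ and appears with multiplicity $p^a$, forcing $n = p^{a+a_1}$, and then produce the cyclic block of $\tilde G$ labelled by $t$. To build $t$, fix $\xi \in \mathbb{F}^*$ of order $p^{a+a_1}$. Since $p$ is odd and $v_p(q^\delta - 1) = a$ (using $v_p(q^2 - 1) = v_p(q+1) = a$ in the unitary case), lifting-the-exponent gives $v_p(q^{\delta p^{a_1}} - 1) = a + a_1$, so the $\mathbb{F}_{q^\delta}$-minimal polynomial $\Delta$ of $\xi$ has degree exactly $p^{a_1}$. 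In the unitary case, the unique subgroup of order $p^{a_1}$ in $(\mathbb{Z}/p^{a+a_1}\mathbb{Z})^*$ is generated by both $q^2$ and $-q$ (each has order $p^{a_1}$ by LTE), so the $\mathbb{F}_{q^2}$-Galois orbit of $\xi$ is closed under $x \mapsto x^{-q}$, yielding $\Delta = \Delta^\dagger$ as required by \cite[Lemma~$3.2.2$]{HL25}. Let $t \in \tilde G$ be a semisimple element with characteristic polynomial $\Delta^{p^a}$; by Lemma~\ref{DeterminantLem}(d), $\det(t) = \xi^{m_1' p^{a_1+a}} = 1$, so $t \in G$.

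By Lemma~\ref{DeterminantLem}(e), $\tilde C := C_{\tilde G}(t) \cong \GL^\varepsilon_{p^a}(q^{p^{a_1}})$, and $t$ lies in its central torus. I select the Coxeter-type maximal torus $\tilde T$ of $\tilde C$; as a torus of $\tilde G$ it has type $(p^{a+a_1})$ and order $q^{p^{a+a_1}} - \varepsilon$, with cyclic Sylow $p$-subgroup $\tilde D$ of order $p^{2a+a_1}$ (by LTE once more), necessarily containing $t$. A character $\theta \in \Irr(\tilde T)$ of $p'$-order in general position then produces, via \cite[Corollary~$3.6.2$]{HL25}, a cyclic $p$-block $\tilde \bB$ of $\tilde G$ with defect group $\tilde D$. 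Let $\bB$ be any block of $G$ covered by $\tilde \bB$. By Lemma~\ref{TildeCmoduloC}(d), $D := G \cap \tilde D$ has order $p^{a+a_1}$; as a subgroup of the cyclic group $\tilde D$ it is cyclic, and in fact $D = \langle t \rangle$. A Brauer-pair argument using Lemma~\ref{BrauerPairsLem} then yields that $D$ is a defect group of $\bB$, proving (a) and (b). For (c), the condition $n = p^{a+a_1}$ forces $c = a$ and $|O_p(Z)| = p^a$; since $D = \langle t \rangle$ is cyclic and $O_p(Z) \le D$ by Lemma~\ref{YEqualOpZ}(c), $O_p(Z)$ is the unique subgroup of $D$ of order $p^a$. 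For any $0 \le c' \le a$, I take $Y$ to be the unique subgroup of $O_p(Z)$ of order $p^{c'}$, and let $\bar \bB$ be the block of $G/Y$ dominated by $\bB$; then $\bar \bB$ has defect group $D/Y$ by \cite[Lemma~$2.4.1$]{HL25}.

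The principal obstacle is to guarantee simultaneously that the Coxeter torus of $\tilde C$ is maximal in $\tilde G$ with cyclic Sylow $p$-subgroup containing $t$, so that \cite[Corollary~$3.6.2$]{HL25} produces the cyclic block $\tilde \bB$ as expected; in the unitary case, the self-duality $\Delta = \Delta^\dagger$ must also be verified, and both items reduce to the arithmetic identity $\langle -q \rangle = \langle q^2 \rangle$ in $(\mathbb{Z}/p^{a+a_1}\mathbb{Z})^*$ established via LTE.
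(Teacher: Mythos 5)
Your proposal follows essentially the same route as the paper: both produce the cyclic block of $\tilde{G}=\GL_n^\varepsilon(q)$ from a strictly regular $p'$-element in the (cyclic) Coxeter torus $\tilde{T}$ of order $q^n-\varepsilon$, take $\tilde{D}$ to be the Sylow $p$-subgroup of $\tilde{T}$, and descend to $G$. Your explicit construction of the $p$-element $t$ (via LTE and the identity $\langle -q\rangle=\langle q^2\rangle$ inside $(\mathbb{Z}/p^{a+a_1}\mathbb{Z})^*$) is a nice concrete realization of the configuration in Lemma~\ref{AnalysingTheDeltas}(c), but it is not needed for the existence statement; the paper simply starts from $\tilde{T}$ and a strictly regular $\tilde{s}$ supplied by \cite[Lemma~$3.6.3$]{HL25}. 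You should cite this lemma: you invoke a $p'$-order character $\theta\in\Irr(\tilde{T})$ in general position without justifying its existence, and for $n\geq 3$ this is exactly what \cite[Lemma~$3.6.3$]{HL25} provides.

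The one genuine gap is the step ``A Brauer-pair argument using Lemma~\ref{BrauerPairsLem} then yields that $D$ is a defect group of $\bB$.'' As stated, Lemma~\ref{BrauerPairsLem} requires you to already have in hand a block of $C_G(E)$ with defect group $D$, together with the fact that $(E,\bc)$ is a $\bB$-Brauer pair; it is not clear how to set this up without circularity at the point where you invoke it. The clean statement that does the job here is \cite[Corollary~$2.5.17$]{HL25}, which the paper uses: because $\tilde{s}$ is strictly regular in $G^*$, the restriction $\Res^{\tilde{G}}_G(\tilde{\chi})$ is irreducible, $\tilde{\bB}$ covers a unique block $\bB$ of $G$, and $D=\tilde{D}\cap G$ is a defect group of $\bB$. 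You should replace the Brauer-pair appeal with this corollary (or spell out the Brauer-pair argument in detail); otherwise parts (a) and (b) are not actually established. Your treatment of part (c) is correct and in fact slightly more careful than the paper's (which has the harmless slip ``$Y\leq G$'' in place of ``$Y\leq Z$''): since $|Z|_p=p^a$ and $D$ is cyclic containing $O_p(Z)$, a subgroup $Y\leq O_p(Z)$ of order $p^{c'}$ exists for any $0\leq c'\leq a$, and \cite[Lemma~$2.4.1$]{HL25} gives the defect group $D/Y$.
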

\begin{proof}
Let~$\tilde{\bT}$ denote a Coxeter torus of~$\tilde{\bG}$; thus~$\tilde{T}$ is a
cyclic group of order~$q^n - \varepsilon$ (notice that~$n$ is odd and 
$n \geq 3$). Put $\bT := \tilde{\bT} \cap \bG$. By 
\cite[Lemma~$3.6.3$]{HL25}
there
exists $\tilde{s} \in \tilde{T}$, whose image in~$G^*$ is strictly regular.

Let~$\tilde{\chi}$ denote the corresponding irreducible Deligne-Lusztig 
character of~$\tilde{G}$ and let~$\tilde{\bB}$ denote the $p$-block 
of~$\tilde{G}$ containing~$\tilde{\chi}$. By 
\cite[Lemma~$2.5.16$]{HL25},
a Sylow $p$-subgroup~$\tilde{D}$ 
of~$\tilde{T}$ is a defect group of~$\tilde{\bB}$. In particular,~$\tilde{D}$ 
is cyclic of order $p^{2a+a_1}$.

As $s \in G^*$ is strictly regular, \cite[Corollary~$2.5.17$]{HL25}
implies that $\chi := \Res^{\tilde{G}}_G( \tilde{\chi} )$ is
irreducible,~$\tilde{\bB}$ covers a unique block~$\bB$ of~$G$ and that 
$D := \tilde{D} \cap G$ is a defect group of~$\bB$. Clearly,~$D$ is cyclic of 
order $p^{a+a_1}$. Let $Y \leq G$ with $|Y| = p^{c'}$. The $p$-block~$\bar{\bB}$ 
of~$G/Y$ dominated by~$\bB$ has defect group $D/Y$. This concludes our proof.
\end{proof}

Proposition~\ref{Existence1Prop} shows that all possible parameters determined 
in Lemma~\ref{AnalysingTheDeltas}(c) arise from blocks.
We next investigate the situation of Lemma~\ref{AnalysingTheDeltas}(d), where 
we restrict to the cases of Proposition~\ref{MainCorCase2}(a) with $n_1$, $n_2$ 
odd, and Proposition~\ref{MainCorCase2}(b) with $n_1$ even and $n_2$ odd.

\begin{prop}
\label{Existence2PropA}
Let $a, a_1, a_2$ be integers with $a, a_1$ positive and 
$0 \leq a_2 \leq a, a_1$. If $a_1 > a_2$ put $c := a_2$. If $a_1 = a_2$, choose 
an integer~$c$ with $a_1 \leq c \leq a$. If $a_1 = a_2 = c$, let $p > 3$.

Let~$q$ satisfy the conclusion of {\rm Lemma~\ref{ExistenceOfQ}} with respect 
to~$p$ and~$a$. If $p^a = 3$, assume that $q > 2$. Put $m_2 := 1$. If 
$a_1 > a_2$, let $m_1 := 1$. If $a_1 = a_2$, let $m_1 := p^{c-a_1} - 1$ unless 
$c = a_1$; in the latter case, let $m_1 := 2$ if $p \neq 3$, and $m_2 := 4$, if 
$p = 3$.  Finally, let $n := n_1 + n_2$ with $n_j := m_jp^{a_j}$ for $j = 1, 2$.

Let $Y := O_p( Z )$. Then $|Y| = p^c$ and there is a block~$\bar{\bB}$ of 
$\bar{G} = G/Y$ with cyclic defect group of order $p^{a + a_1 + a_2 - c}$. 
Moreover, the block $\bB$ of~$G$ dominating~$\bar{\bB}$ has abelian defect 
groups, which are direct products of two cyclic groups of order $p^{a+a_1}$ 
and $p^{a_2}$.
\end{prop}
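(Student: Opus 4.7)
The plan is to construct the required block by mimicking the Deligne--Lusztig approach of Proposition~\ref{Existence1Prop}, with the Coxeter torus of~$\tilde{\bG}$ replaced by a maximal torus of type $(n_1, n_2)$ reflecting the desired primary decomposition. As a preliminary arithmetic step, I would verify that $|Y| = p^{c}$: since $|Z| = \gcd(q - \varepsilon, n)$ and $p^{a}\,\|\,q - \varepsilon$, this reduces to computing the $p$-adic valuation $v_p(n)$. When $a_1 > a_2$, $n = p^{a_1} + p^{a_2}$ has $v_p(n) = a_2 = c$; when $a_1 = a_2 < c$, $n = p^{c}$; and when $a_1 = a_2 = c$ the special choice of $m_1$ (or $m_2$ when $p = 3$) is tailored precisely so that $v_p(n) = a_1 = c$. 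In all subcases $v_p(n) = c \leq a$, and hence $|O_p(Z)| = p^{c}$.

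Next, I would fix an $F$-stable maximal torus $\tilde{\bT}$ of $\tilde{\bG}$ such that $\tilde{T} := \tilde{\bT}^F \cong \tilde{T}_{n_1} \times \tilde{T}_{n_2}$, where $\tilde{T}_{n_j}$ is a Coxeter torus of $\GL_{n_j}^\varepsilon(q)$, cyclic of order $q^{n_j} - \varepsilon^{n_j}$. Since $b_1 = b_2 = 0$, the Sylow $p$-subgroup $\tilde{D}_j$ of $\tilde{T}_{n_j}$ is cyclic of order $p^{a + a_j}$, so $\tilde{D} := \tilde{D}_1 \times \tilde{D}_2$ is abelian of order $p^{2a + a_1 + a_2}$, consistent with Lemma~\ref{AnalysingTheDeltas}(d). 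Applying \cite[Lemma~$3.6.3$]{HL25}, I pick $\tilde{s} \in \tilde{T}$ whose image in~$G^*$ is strictly regular; the hypothesis $q > 2$ when $p^{a} = 3$ is needed here to ensure enough semisimple elements are available. Let~$\tilde{\chi}$ be the associated irreducible Deligne--Lusztig character of~$\tilde{G}$, and $\tilde{\bB}$ the $p$-block containing it. By \cite[Lemma~$2.5.16$]{HL25}, $\tilde{D}$ is a defect group of~$\tilde{\bB}$, and by \cite[Corollary~$2.5.17$]{HL25} the restriction $\chi := \Res^{\tilde{G}}_{G}(\tilde{\chi})$ is irreducible, $\tilde{\bB}$ covers a unique block~$\bB$ of~$G$, and $D := \tilde{D} \cap G$ is a defect group of~$\bB$.

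To pin down the structure of~$D$, I would compute the determinants of generators of $\tilde{D}_1$ and $\tilde{D}_2$ via Lemma~\ref{DeterminantLem}(d), showing that $D = \ker(\det|_{\tilde{D}})$ decomposes as a direct product $C_{p^{a+a_1}} \times C_{p^{a_2}}$. The central subgroup $Y := O_p(Z)$ has order $p^{c}$ by the arithmetic above, and is contained in~$D$ because $Y \leq O_p(Z(\tilde{C})) \leq \tilde{D}$ and $Y \leq G$. By \cite[Lemma~$2.4.1$]{HL25}, the block~$\bar{\bB}$ of $\bar{G} = G/Y$ dominated by~$\bB$ has defect group $\bar{D} := D/Y$ of order $p^{a + a_1 + a_2 - c}$. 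Finally, one checks that $\bar{D}$ is cyclic: in the subcase $a_1 > a_2$, $Y$ occupies exactly the $C_{p^{a_2}}$-factor; in the subcase $a_1 = a_2$, the precise embedding of~$Y$ in~$D$ matches the description of $\langle t \rangle \cap O_p(Z)$ in Lemma~\ref{AnalysingTheDeltas}(d)(ii), and yields cyclicity of the quotient.

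The main obstacle will be this last verification, namely showing that~$Y$ sits inside~$D$ in such a way that $D/Y$ is cyclic and not merely abelian of rank~$2$. In the subcase $a_1 = a_2$ the intersection $\langle t \rangle \cap O_p(Z)$ is nontrivial, so~$Y$ sits ``diagonally'' in the direct product decomposition of~$D$, and cyclicity has to be extracted from the determinant computation rather than read off geometrically. The exotic choice $m_2 = 4$ when $p = 3$ and $a_1 = a_2 = c$ is forced precisely to prevent $|Z|_p$ from exceeding $p^{c}$ (which would enlarge~$Y$ and destroy the cyclicity of the quotient), while keeping the characteristic polynomial of the target $p$-element $t \in D$ reducible with the correct primary factors.
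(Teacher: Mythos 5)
Your overall strategy is the same as the paper's: build a maximal torus of type $(n_1,n_2)$ with cyclic factors, produce a strictly regular semisimple element via \cite[Lemma~$3.6.3$]{HL25}, invoke \cite[Lemma~$2.5.16$, Corollary~$2.5.17$]{HL25} to get $\bB$ with defect group $D = \tilde{D}\cap G$, and then pass to $\bar{G}=G/Y$. However, you have identified the hard step (cyclicity of $\bar{D}=D/Y$ when $a_1=a_2$) without actually carrying it out, and the way you propose to dispatch it is circular.

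Specifically, you write that ``the precise embedding of~$Y$ in~$D$ matches the description of $\langle t\rangle\cap O_p(Z)$ in Lemma~\ref{AnalysingTheDeltas}(d)(ii), and yields cyclicity of the quotient.'' But Lemma~\ref{AnalysingTheDeltas}(d)(ii) is proved under the standing hypothesis that a cyclic block $\bar{\bB}$ \emph{already exists} and analyzes its parameters; it cannot be used to \emph{construct} such a block without begging the question. What is actually needed, and what the paper supplies, is an explicit computation: one chooses generators $u_1,u_2$ of $\tilde D_1,\tilde D_2$ so that $(u_1u_2)^{p^{a_1}}\in Z$, sets $e=-m_1$, checks that $D=\langle u_1u_2^{e}\rangle\times\langle u_2^{p^{a}}\rangle$ and $Y=\langle (u_1u_2)^{p^{a+a_1-c}}\rangle$, and then shows $(u_1u_2)^{p^{a+a_1-c}}\notin\Phi(D)$ by a concrete order comparison involving the $p$-adic valuation of $1+m_1=p^{\,c-a_1}$. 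Since $D$ is $2$-generated, $Y\not\leq\Phi(D)$ is exactly what forces $D/Y$ to be cyclic. This Frattini-quotient argument is the content you are missing; ``one checks'' does not do it.

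Two smaller points. First, when $a_2=0$ and $m_2=1$ (so $n_2=1$), \cite[Lemma~$3.6.3$]{HL25} does not apply to $\tilde T_2$; the paper handles this by setting $\tilde s_2=1$, $f_2=1$, and you should do the same rather than invoking the lemma uniformly. Second, your claim that the assignment $m_2:=4$ when $p=3$ ``is forced precisely to prevent $|Z|_p$ from exceeding $p^c$'' cannot be the operative reason: the hypothesis of the proposition already requires $p>3$ whenever $a_1=a_2=c$, so that clause is never reached; the arithmetic that guarantees $v_p(n)=c$ is carried by $m_1:=2$ in the case actually allowed.
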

\begin{proof}
Notice that $n = p^{a_1} + p^{a_2}$ if $a_1 > a_2$, and $n = p^c$ if 
$a_1 = a_2 < c$. If $a_1 = a_2 = c$, then $n = 3p^{a_2}$ if $p > 3$, and 
$n = 5p^{a_2}$ if $p = 3$. In any case, $|O_p( Z )| = p^c$.

Let~$V$ denote the natural vector space of~$\tilde{G}$. Write 
$V = V_1 \oplus V_2$, an orthogonal decomposition into non-degenerate subspaces 
if $\varepsilon = -1$, with $\dim V_j = n_j$ for $j = 1, 2$. Fix 
$j \in \{ 1, 2 \}$. Let $\tilde{G}_j$ denote the subgroup of~$\tilde{G}$ induced 
on~$V_j$, so that $\tilde{G}_j \cong \GL_{n_j}^\varepsilon( q )$. Choose a 
maximal torus $\tilde{T}_{j} \leq \tilde{G}_j$ such that $\tilde{T}_{j}$ is 
cyclic of order $q^{n_j} - \varepsilon^{n_j}$. Let~$\tilde{D}_{j}$ denote the 
Sylow $p$-subgroup of~$\tilde{T}_{j}$; then~$\tilde{D}_j$ is cyclic of 
order~$p^{a+a_j}$. 

Put $\tilde{D} = \tilde{D}_{1} \times \tilde{D}_{2}$ and
$\tilde{T} := \tilde{T_1} \times \tilde{T}_2 \leq 
\tilde{G}_1 \times \tilde{G}_2 \leq \tilde{G}$. Then $\tilde{T}$ is a maximal
torus of~$\tilde{G}$ corresponding to the partition of~$n$ with parts 
$n_1, n_2$.  If $n_2 = 1$, put $f_2 = 1$ and $\tilde{s}_2 = 1$. Otherwise, 
$n_j \geq 3$ for $j = 1, 2$, and the torus~$\tilde{T}_j$ contains a 
$p'$-element~$\tilde{s}_j$ of prime order $f_j$, such that 
$f_j \nmid q^i - \varepsilon^i$ for all $1 \leq i < n_j$; see 
\cite[Lemma~$3.6.3$]{HL25}.
By definition, $n_1 \neq n_2$, and hence $f_1 \neq f_2$.
Put $\tilde{s} := \tilde{s}_1\tilde{s}_2 \in \tilde{T}$. Then,~$\tilde{s}_1$ 
and~$\tilde{s}_2$ have no common eigenvalue, and thus
$C_{\tilde{G}}( \tilde{s} ) = 
C_{\tilde{G}_1}( \tilde{s}_1 ) \times C_{\tilde{G}_1}( \tilde{s}_1 ) = 
\tilde{T}_1 \times \tilde{T}_2 = \tilde{T}$. In other words,~$\tilde{s}$ is 
regular in~$\tilde{G}$. Let~$s$ denote the image of~$\tilde{s}$ in~$G^*$. Then
$|s| = f_1f_2$, and thus~$s$ is strictly regular in~$G^*$ 
by~\cite[$2.5.1$]{HL25}.
Let~$\tilde{\bB}$ denote the $p$-block of~$\tilde{G}$ containing the irreducible 
Deligne-Lusztig character $\tilde{\chi} \in \mathcal{E}( \tilde{G}, \tilde{s} )$. 
As in the proof of
Proposition~\ref{Existence1Prop} we find that
$\chi := \Res^{\tilde{G}}_G( \tilde{\chi} ) \in \Irr( G )$, and
the $p$-block~$\bB$ of~$G$ containing~$\chi$ is the unique block of~$G$ 
covered by~$\tilde{\bB}$. Also, $D := \tilde{D} \cap G$ is a defect group 
of~$\bB$.

For $j = 1, 2$, let $u_j$ denote a generator of $\tilde{D}_j$. By 
Lemma~\ref{DeterminantLem}(d), we have $|\det(u_j)| = p^{a}$ for $j = 1, 2$. 
If $a_1 = a_2$ we choose $u_1$ and $u_2$ in such a way that 
$(u_1u_2)^{p^{a_1}} \in Z$. This is possible since the eigenvalues of generators 
of~$\tilde{D}_1$ and~$\tilde{D}_2$ span the same subgroup of $\mathbb{F}^*$.
In any case, there is an integer~$e$, coprime to~$p$, such that $u_1u_2^{e}$ has 
determinant~$1$ and order $p^{a+a_1}$. By our choice of~$u_1$,~$u_2$, we may and 
will take $e = -m_1$ in case $a_1 = a_2$.
As $u_2^{p^a}$ has determinant~$1$ and order~$p^{a_2}$, we obtain
$D = \langle u_1u_2^{e} \rangle \times \langle u_2^{p^a} \rangle$. This gives our 
claim on the structure of~$D$.

We claim that $\bar{D} := D/Y$ is
cyclic. If $a_1 > a_2$, then $\langle u_1u_2^{e} \rangle \cap Y = \{ 1 \}$, 
since the non-trivial eigenvalues of $u_1^{p^{a_1}}$ and $u_2^{ep^{a_1}}$ have 
distinct orders. As $|D| = p^{a+a_1+a_2}$ and $a_2 = c$, we get 
$D = \langle u_1u_2^{e} \rangle \times Y$. In particular, $\bar{D}$ is cyclic.
If $a_1 = a_2$, we have $Y = \langle (u_1u_2)^{p^{a+a_1-c}} \rangle$ by our
choice of~$u_1$ and~$u_2$. The Frattini subgroup~$\Phi(D)$ of~$D$ is generated 
by $(u_1u_2^{e})^p$ and $u_2^{p^{a+1}}$ as a direct product. An elementary
calculation shows that $(u_1u_2)^{p^{a+a_1-c}}$ is not contained in $\Phi( D )$.
Indeed, $u_1u_2 = (u_1u_2^e)u_2^{1-e}$, and thus $(u_1u_2)^{p^{a+a_1-c}} \in 
\langle (u_1u_2^{e})^p \rangle \times \langle u_2^{p^{a+1}} \rangle$ would imply 
that $$u_2^{fp^{a+1}} = u_2^{(1-e)p^{a+a_1-c}} = u_2^{(1+m_1)p^{a+a_1-c}}$$
for some integer~$f$. However, the $p$-part of $1 + m_1$ equals~$p^{c - a_1}$.
Hence 
$$|u_2^{(1+m_1)p^{a+a_1-c}}| = |u_2^{p^a}| > |u_2^{fp^{a+1}}|,$$
a contradiction.
As~$D$ is a $2$-generator group, this implies that~$\bar{D}$ is cyclic.

Let~$\bar{\bB}$ denote 
the block of $\bar{G}$ dominated by~$\bB$. The defect group
of~$\bar{\bB}$ equals~$\bar{D}$, which proves our assertions.
\end{proof}

We next investigate the situation of Lemma~\ref{AnalysingTheDeltas}(d)(iii).
\begin{prop}
\label{Existence2PropB}
Let~$p$ be an odd prime and let $c', c, a$ be integers with 
$0 \leq c' \leq c < a$. If $c = 0$, put $n := 2$. Otherwise, let
$n \in \{ p^c, 2p^c \}$. 

Let~$q$ satisfy the conclusion of {\rm Lemma~\ref{ExistenceOfQ}} with respect
to~$p$ and~$a$.  Then there is a block~$\bB$ of $G = \SL^\varepsilon_n( q )$ 
such that the following statements hold.

{\rm (a)} The defect group of~$\bB$ is cyclic of order $p^{a}$.

{\rm (b)} There is a subgroup $Y \leq Z$ with $|Y| = p^{c'}$, and a
block~$\bar{\bB}$ of $G/Y$ dominated by~${\bB}$.
\end{prop}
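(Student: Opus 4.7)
The plan is to mimic the construction of Proposition~\ref{Existence2PropA}, specialized to the regime $a_1 = a_2 = 0$ of Lemma~\ref{AnalysingTheDeltas}(d)(iii). First I would choose a decomposition $n = n_1 + n_2$ with $n_1, n_2 \geq 1$ and $p \nmid n_1 n_2$: for $c = 0$ put $n_1 = n_2 = 1$; for $c > 0$ put $n_1 = 1$ and $n_2 = n - 1$, noting that $n_2 \equiv -1 \pmod{p}$, so $p \nmid n_2$, regardless of whether $n = p^c$ or $n = 2p^c$.

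Next, I would decompose $V = V_1 \oplus V_2$ (orthogonally when $\varepsilon = -1$) with $\dim V_j = n_j$, and let $\tilde{G}_j \leq \tilde{G}$ be the stabilizer of $V_j$, so that $\tilde{G}_j \cong \GL_{n_j}^\varepsilon(q)$. For $j = 1, 2$, I pick a Coxeter-type cyclic maximal torus $\tilde{T}_j \leq \tilde{G}_j$ of order $q^{n_j} - \varepsilon^{n_j}$. A lifting-the-exponent argument (using the hypothesis $p^a \| q - \varepsilon$ and treating both parities of $n_j$ in the unitary case) gives $v_p(q^{n_j} - \varepsilon^{n_j}) = a$ whenever $p \nmid n_j$, so the Sylow $p$-subgroup $\tilde{D}_j$ of $\tilde{T}_j$ is cyclic of order $p^a$. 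Set $\tilde{T} := \tilde{T}_1 \times \tilde{T}_2$ and $\tilde{D} := \tilde{D}_1 \times \tilde{D}_2$.

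I would then construct a $p'$-element $\tilde{s} = \tilde{s}_1 \tilde{s}_2 \in \tilde{T}$ whose image $s$ in $G^*$ is strictly regular, following the pattern of Proposition~\ref{Existence2PropA}. When $n_j > 1$, extract $\tilde{s}_j \in \tilde{T}_j$ of prime order $f_j$ with $f_j \nmid q^i - \varepsilon^i$ for $1 \leq i < n_j$ via \cite[Lemma~$3.6.3$]{HL25}; when $n_j = 1$, take $\tilde{s}_j \in \tilde{T}_j$ of suitable $p'$-order, arranging the two eigenvalues to be distinct in the case $n_1 = n_2 = 1$. In all cases, pairwise distinct eigenvalues of $\tilde{s}$ on $V$ force $C_{\tilde{G}}(\tilde{s}) = \tilde{T}$, and strict regularity of $s$ in $G^*$ then follows as in the proof of Proposition~\ref{Existence2PropA}. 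Letting $\tilde{\chi} \in \mathcal{E}(\tilde{G}, \tilde{s})$ be the corresponding irreducible Deligne-Lusztig character, $\tilde{\bB}$ the $p$-block of $\tilde{G}$ containing it, and $\chi := \Res^{\tilde{G}}_G \tilde{\chi}$, the results \cite[Lemma~$2.5.16$]{HL25} and \cite[Corollary~$2.5.17$]{HL25} imply that $\tilde{D}$ is a defect group of $\tilde{\bB}$, that $\chi$ is irreducible, that $\tilde{\bB}$ covers a unique block $\bB$ of $G$, and that $D := \tilde{D} \cap G$ is a defect group of $\bB$. Lemma~\ref{DeterminantLem}(d) with $a_j = b_j = 0$ shows that $\det(u_j)$ has order $p^a$ in $\mathbb{F}_q^*$ for a generator $u_j$ of $\tilde{D}_j$; hence $D = \ker(\det|_{\tilde{D}}) \cap G$ is cyclic of order $p^a$, giving~(a). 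For~(b), pick any $Y \leq O_p(Z)$ of order $p^{c'}$ (possible since $|O_p(Z)| = p^c \geq p^{c'}$), and let $\bar{\bB}$ be the unique block of $G/Y$ dominated by $\bB$, which exists by \cite[Theorem~$5.8.11$]{NaTs}.

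The main obstacle is handling the degenerate case $n_1 = n_2 = 1$ (so $c = 0$, $n = 2$), where \cite[Lemma~$3.6.3$]{HL25} supplies only $\tilde{s}_j = 1$ and the standard Coxeter argument collapses. There I would instead take $\tilde{s}$ to be any diagonal matrix in $\GL_2^\varepsilon(q)$ with two distinct eigenvalues of $p'$-order (drawn from $\mathbb{F}_q^*$ for $\varepsilon = 1$, or from the norm-one subgroup of $\mathbb{F}_{q^2}^*$ for $\varepsilon = -1$), and verify strict regularity of its image in $\PGL_2^\varepsilon(q)$ directly from the definition.
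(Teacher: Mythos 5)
Your proof is correct in its essentials, but it takes a genuinely different route from the paper's. The paper avoids passing through $\tilde G = \GL_n^\varepsilon(q)$ entirely: for $c>0$ it sets $n_1 := n-1$, $n_2 := 1$, embeds $G_1 := \GL_{n_1}^\varepsilon(q)$ naturally into $G = \SL_n^\varepsilon(q)$, and takes $D := O_p(Z(G_1))$ (cyclic of order $p^a$). Because $c<a$, one checks $C_G(D) = G_1$ and in fact $N_G(D) = G_1$; the existence of a block $\bb$ of $G_1$ with defect group $D$ is supplied by a single Deligne--Lusztig character of $G_1$ attached to a regular $p'$-element of a Coxeter torus (via \cite[Lemma~$3.6.3$]{HL25} and \cite[Lemma~$2.5.16$]{HL25}), and the desired block $\bB$ is then simply the Brauer correspondent of $\bb$ in $G$. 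This is shorter than your argument because one never needs $\tilde G$, the covering block $\tilde{\bB}$, or the restriction machinery of \cite[Corollary~$2.5.17$]{HL25}; the defect group $D$ is produced directly inside $G$ as the $p$-part of $Z(G_1)$, rather than recovered as $\tilde D \cap G$ from a rank-two $\tilde D$. Your approach mirrors Proposition~\ref{Existence2PropA} more faithfully, which has the virtue of uniformity, at the price of carrying the extra apparatus of $\tilde G$ and of having to worry about the structure of $\ker(\det|_{\tilde D})$; both work.

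Two minor points to tighten: (1) you invoke \cite[Lemma~$3.6.3$]{HL25} "when $n_j>1$," but the paper is careful to separate off $n_j = 2$ (which arises here only for $p=3$, $c=1$, $n=3$); you should either check that the lemma applies to degree $2$ or treat that case by hand as the paper does. (2) In the case $n_1=1$, $n_2>1$, the cleanest choice, consistent with the convention in Proposition~\ref{Existence2PropA}, is $\tilde s_1 = 1$ rather than an arbitrary nontrivial scalar: then $C_{\tilde G}(\tilde s) = \tilde G_1 \times \tilde T_2 = \tilde T$ immediately and $|s| = f_2$, so strict regularity follows from \cite[$2.5.1$]{HL25} exactly as in that proof, without a side argument about distinct eigenvalues.
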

\begin{proof}
The statements are clear if $c = 0$ and $n = 2$. Thus assume that $c > 0$ in the
following. Put $n_1 := n - 1$ and $n_2 := 1$. Let 
$G_1 := \GL_{n_1}^\varepsilon( q )$, naturally embedded into~$G$. Let 
$D := O_p( Z( G_1 ) )$. Then~$D$ is cyclic of order~$p^a$. As $c < a$, we have 
$C_G(D) = G_1$. If $n_1 > 2$, a cyclic maximal torus of~$G_1$ of order 
$q^{n_1} - \varepsilon^{n_1}$ contains $p'$-elements which are regular with 
respect to~$G_1$; see \cite[Lemma~$3.6.3$]{HL25}.
In this case, \cite[Lemma~$2.5.16$]{HL25}
guarantees the existence of a block~$\bb$ 
of~$G_1$ with defect group~$D$. The same conclusion clearly also holds for 
$n_1 = 2$. Now $N_G(G_1) = G_1$, and thus the Brauer correspondent~$\bB$ 
of~$\bb$ satisfies~(a).

The proof of~(b) is analogous to the proof of~(c) of 
Proposition~\ref{Existence1Prop}.
\end{proof}

We finally investigate the situation of Lemma~\ref{AnalysingTheDeltas}(e),
where we restrict to the cases of Proposition~\ref{MainCorCase3}(b)(c), also
assuming that $a \neq a_1$. (If $a = a_1$ in Proposition~\ref{MainCorCase3}(b),
the resulting set~$A$ is an interval.)

\begin{prop}
\label{Existence3Prop}
Let~$p$ be an odd prime and let $a, a_1$ be positive integers with $a \neq a_1$.

Let~$q$ satisfy the conclusion of {\rm Lemma~\ref{ExistenceOfQ}} with respect
to~$p$ and~$a$. Put $n_1 := p^{a_1}$ and $n_3 := 1$. If $a_1 > a$, put 
$n_2 := 2p^a - 1$, and if $a_1 < a$, put $n_2 := p^a - p^{a_1} - 1$. In any 
case, let $n := n_1 + n_2 + n_3$, so that $p^a = |O_p( Z )|$. 

Put $Y := O_p( Z )$.  Then there is a block~$\bar{\bB}$ of 
$\bar{G} := G/Y$ with cyclic defect group of order $p^{a+a_1}$. Moreover, the 
defect groups of the block $\bB$ of~$G$ dominating~$\bar{\bB}$ are direct 
products of two cyclic groups of orders $p^{a+a_1}$ and $p^{a}$, respectively.
\end{prop}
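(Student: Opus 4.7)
My proof would follow the pattern of Propositions~\ref{Existence1Prop} and~\ref{Existence2PropA}, extended to a three-factor decomposition. Decompose the natural module as $V = V_1 \oplus V_2 \oplus V_3$, orthogonally if $\varepsilon = -1$, with $\dim V_j = n_j$; let $\tilde{G}_j \cong \GL_{n_j}^\varepsilon(q)$ denote the block-diagonal subgroup of~$\tilde{G}$ acting on~$V_j$. For each~$j$ I would choose a Coxeter torus $\tilde{T}_j \leq \tilde{G}_j$ (note that $n_1 = p^{a_1}$ is odd and $n_2$ is odd in both cases, so for $\varepsilon = -1$ these are cyclic of order $q^{n_j} + 1$). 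By the lifting-the-exponent lemma, the Sylow $p$-subgroup $\tilde{D}_j \leq \tilde{T}_j$ is cyclic of orders $p^{a+a_1}$, $p^a$, $p^a$ for $j = 1, 2, 3$. Set $\tilde{T} := \tilde{T}_1 \times \tilde{T}_2 \times \tilde{T}_3$, a maximal torus of~$\tilde{G}$ of type $(n_1, n_2, n_3)$, with Sylow $p$-subgroup $\tilde{D} := \tilde{D}_1 \times \tilde{D}_2 \times \tilde{D}_3$.

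Next, as in Proposition~\ref{Existence2PropA}, I would select regular $p'$-elements $\tilde{s}_j \in \tilde{T}_j$ of pairwise distinct prime orders for $j = 1, 2$ via \cite[Lemma~$3.6.3$]{HL25} (both $n_1$ and $n_2$ are at least $3$), set $\tilde{s}_3 := 1$, and form $\tilde{s} := \tilde{s}_1 \tilde{s}_2 \tilde{s}_3$. Then $C_{\tilde{G}}(\tilde{s}) = \tilde{T}$, so $\tilde{s}$ is regular semisimple and its image in~$G^*$ is strictly regular. Let $\tilde{\bB}$ be the block of~$\tilde{G}$ containing the irreducible Deligne--Lusztig character $\tilde{\chi} \in \mathcal{E}(\tilde{G},\tilde{s})$; by \cite[Lemma~$2.5.16$]{HL25} and \cite[Corollary~$2.5.17$]{HL25}, $\tilde{\bB}$ has defect group~$\tilde{D}$, the restriction $\chi := \Res^{\tilde{G}}_G(\tilde{\chi})$ is irreducible, and the block~$\bB$ of~$G$ containing~$\chi$ is the unique block covered by~$\tilde{\bB}$, with defect group $D := \tilde{D} \cap G$. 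Writing $u_j$ for a generator of $\tilde{D}_j$, a direct calculation with the determinant---using that its restriction to each $\tilde{D}_j$ surjects onto the order-$p^a$ Sylow of the image in $\mathbb{F}_{q^\delta}^*$---exhibits explicit generators $v = u_1 u_2^\alpha$ (of order $p^{a+a_1}$) and $w = u_2 u_3^\beta$ (of order $p^a$) of~$D$ with $\langle v \rangle \cap \langle w \rangle = \{1\}$, whence $D \cong C_{p^{a+a_1}} \times C_{p^a}$.

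Setting $Y := O_p(Z)$, which has order~$p^a$ because $p^a \mid n$ by construction and $(q-\varepsilon)_p = p^a$, the defect group $\bar{D} = D/Y$ of the block~$\bar{\bB}$ of~$\bar{G}$ dominated by~$\bB$ has order~$p^{a+a_1}$. The main obstacle is verifying that~$\bar{D}$ is cyclic. Normalizing the $u_j$ so that $u_1^{p^{a_1}} = \zeta_0 I_{V_1}$, $u_2 = \zeta_0 I_{V_2}$ and $u_3 = \zeta_0 I_{V_3}$ for a fixed primitive $p^a$-th root of unity~$\zeta_0$, the central generator~$\zeta_0 I_n$ of~$Y$ satisfies $\zeta_0 I_n = u_1^{p^{a_1}} u_2 u_3$. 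Solving $\zeta_0 I_n = v^i w^j$ in terms of the coordinates in $\tilde{D}_1 \times \tilde{D}_2 \times \tilde{D}_3$ forces $i = p^{a_1}$ and, after substitution, reduces the remaining congruence to $p^{a_1} + 1 \equiv -n_2 \pmod{p^a}$ (where one uses the LTE-based identity $(q^{n_1}-1)/(q-1) \equiv p^{a_1} \pmod{p^a}$). Both formulas for~$n_2$ in the statement are precisely designed to make this congruence hold---in the case $a_1 > a$ one has $-n_2 \equiv 1 \equiv p^{a_1} + 1 \pmod{p^a}$, and in the case $a_1 < a$ one has $-n_2 \equiv p^{a_1} + 1 \pmod{p^a}$---and they simultaneously force the $w$-exponent~$j$ to be coprime to~$p$. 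Consequently $Y \subseteq \langle v^p, w \rangle$, a maximal subgroup of~$D$, so $r(\bar{D}) = \dim_{\mathbb{F}_p}(D/(Y \cdot pD)) = 1$ and $\bar{D}$ is cyclic of order~$p^{a+a_1}$, as required.
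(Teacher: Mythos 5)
Your construction of the block $\bB$ is exactly the paper's: the same decomposition $V = V_1 \oplus V_2 \oplus V_3$, the same Coxeter tori $\tilde{T}_j$, the same regular $p'$-element $\tilde{s} = \tilde{s}_1 \tilde{s}_2$ with $\tilde{s}_3 = 1$, and the same invocation of strict regularity to produce $\bB$ with defect group $D = \tilde{D} \cap G$. Where you diverge is the verification that $\bar{D}=D/Y$ is cyclic. The paper does this more cleanly: it produces an element $t \in D$ of order $p^{a+a_1}$ that acts \emph{trivially} on $V_2$ and has an eigenvalue of order $p^a$ on $V_3$ (of the form $(u_1^\gamma, 1, u_3^\delta)$ with $\gamma,\delta$ coprime to $p$, chosen so $\det t = 1$). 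Since any element of $Y$ is a scalar on all three summands while $t$ is trivial on $V_2$ and nontrivial on $V_3$, the intersection $\langle t\rangle \cap Y$ is immediately trivial, so $D = \langle t\rangle \times Y$ and $\bar{D}$ is cyclic by order counting. Your choice of generators $v = u_1 u_2^\alpha$, $w = u_2 u_3^\beta$, in which $v$ acts nontrivially on $V_2$, forces you into the congruence computation; it does work (the $V_3$-coordinate alone already forces $j$ coprime to $p$, hence $Y \not\subseteq pD$), but the accompanying claim that the formulas for $n_2$ are ``precisely designed'' to make the second-coordinate congruence hold is a misattribution: that congruence is just the condition $p^a \mid n$ restated, and $n_2$ is chosen to guarantee $p^a = |O_p(Z)|$, $n_2 \geq 3$, and $n_1 \neq n_2$, not to rescue the congruence. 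In short, same approach and correct conclusion, but the paper's choice of generator sidesteps the arithmetic you laboured through.
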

\begin{proof}
Let~$V$ denote the natural vector space of~$\tilde{G}$. Write
$V = V_1 \oplus V_2 \oplus V_3$, an orthogonal decomposition into non-degenerate
subspaces if $\varepsilon = -1$, with $\dim V_j = n_j$ for $j = 1, 2, 3$. Fix 
$j \in \{ 1, 2, 3 \}$. Let $\tilde{G}_j$ denote the subgroup of~$\tilde{G}$
induced on~$V_j$, so that $\tilde{G}_j \cong \GL_{n_j}^\varepsilon( q )$. Choose 
a maximal torus $\tilde{T}_{j} \leq \tilde{G}_j$ such that $\tilde{T}_{j}$ is 
cyclic of order $q^{n_j} - \varepsilon^{n_j}$. Let~$\tilde{D}_{j}$ denote the 
Sylow $p$-subgroup of~$\tilde{T}_{j}$; then~$\tilde{D}_j$ is cyclic of 
order~$p^{a+a_j}$ (with $a_2 = a_3 = 0$).

Put $\tilde{D} = \tilde{D}_{1} \times \tilde{D}_{2} \times \tilde{D}_3$ and
$\tilde{T} := \tilde{T_1} \times \tilde{T}_2  \times \tilde{T}_3 \leq 
\tilde{G}_1 \times \tilde{G}_2  \times \tilde{G}_3 \leq \tilde{G}$. Then 
$\tilde{T}$ is a maximal torus of~$\tilde{G}$ corresponding to the partition 
of~$n$ with parts $n_1, n_2, n_3$.

For $j = 1, 2$, the
torus~$\tilde{T}_j$ contains a $p'$-element~$\tilde{s}_j$ of prime order $f_j$,
such that $f_j \nmid q^i - \varepsilon^i$ for all $1 \leq i < n_j$; see
\cite[Lemma~$3.6.3$]{HL25}.
Now $f_1 \neq f_2$, as $n_1 \neq n_2$.
In particular,~$\tilde{s}_1$ and~$\tilde{s}_2$ have no common eigenvalue, and no 
eigenvalue~$1$. Put $\tilde{s} := \tilde{s}_1\tilde{s}_2 \in \tilde{T}$. 
Then $C_{\tilde{G}}( \tilde{s} ) = 
C_{\tilde{G}_1}( \tilde{s}_1 ) \times C_{\tilde{G}_2}( \tilde{s}_2 ) \times
\tilde{G}_3 = \tilde{T}_1 \times \tilde{T}_2 \times \tilde{T}_3 = \tilde{T}$. In 
other words,~$\tilde{s}$ is regular in~$\tilde{G}$. Let~$s$ denote the image 
of~$\tilde{s}$ in~$G^*$. Then $|s| = f_1f_2$, and thus~$s$ is strictly regular 
in~$G^*$ by~\cite[$2.5.1$]{HL25}.

Let~$\tilde{\bB}$ denote the $p$-block of~$\tilde{G}$ containing the 
corresponding irreducible Deligne-Lusztig character 
$\tilde{\chi} \in \mathcal{E}(G,s)$. As in the proof of 
Proposition~\ref{Existence1Prop} we find that
$\chi := \Res^{\tilde{G}}_G( \tilde{\chi} ) \in \Irr( G )$, and
the $p$-block~$\bB$ of~$G$ containing~$\chi$ is the unique block of~$G$
covered by~$\tilde{\bB}$. Also, $D := \tilde{D} \cap G$ is a defect group
of~$\bB$.

Clearly, $\tilde{D} \cap G \cong \tilde{D}_1 \times \tilde{D}_2$, and thus
the structure of~$D$ is as claimed. there is an element $t \in D$ of 
order~$p^{a+a_1}$, which acts trivially on~$V_2$, and whose eigenvalue on~$V_3$ 
has order~$p^a$. In particular, $\langle t \rangle \cap Y = \{ 1 \}$, so that 
$D = \langle t \rangle \times Y$. Hence $D/Y$ is cyclic of order $p^{a+a_1}$.

The block $\bar{\bB}$ of $\bar{G}$ dominated by~$\bB$ has defect group $D/Y$, 
which proves our assertions.
\end{proof}

Taking 
$\varepsilon = -1$, Propositions~\ref{Existence1Prop}--\ref{Existence3Prop},
together with Propositions~\ref{Case1Prop}--\ref{MainCorCase3}, prove 
Theorem~\ref{MainTheoremSLNSUNII}.

\section{Glossary}\label{sec:Glossary}%
To finish with, to facilitate the reading of our results, we summarize the main 
notation used in this manuscript in form of a glossary. Part of this notation 
was introduced in Part I and Part II 
of our work. 

\medskip

\noindent\textbf{General assumptions}:
\begin{enumerate}[label=$\bullet$, leftmargin=5mm]
	\item  $p$ is an odd prime number;
	\item  $(K,\mathcal{O},k)$ is a sufficiently large $p$-modular system, 
               where $\cO$ is a d.v.r. of characteristic zero with residue 
               field $k=\overline{k}$ of characteristic $p$;
	\item  $l$ is a non-negative integer.
\end{enumerate}
\smallskip

\noindent\textbf{Special functions and intervals}:
\begin{enumerate}[label=$\bullet$, leftmargin=5mm] 
	\item  $\sigma^{}_{\chi}:=\mathrm{sgn}\circ \chi: X\rightarrow \{-1,0,1\}$ 
               is the sign function associated to the map 
               $\chi:X\rightarrow \mathbb{R}$, where $X$ is a set 
               (see \cite[Definition 2.1.1]{HL25});
	\item  $\rho^{[m]}( t ) := (\rho( t^{p^{m-1}} ), \rho( t^{p^{m-2}} ), 
               \ldots, \rho( t^p ), \rho( t ) )\in X^{m}$ for a set $X$ and 
               positive integer $m$, lists  the values of a map 
               $\rho:H\rightarrow X$ at the $p$-elements 
               $t, t^p, t^{p^2}, \ldots, t^{p^{m-1}}$ of a finite group $H$  in 
               reverse order (see \cite[Definition 2.1.2]{HL25});
	\item  $\Lambda:=\{x\in \mathbb{Z}\mid 1\leq x\leq l-1\}$;
	\item  an \emph{interval} is a subset of $\Lambda$ which is the intersection 
               of $\Lambda$ with an interval of $\mathbb{R}$, possibly the empty set 
               (see \cite[Subsection 2.2]{HL25});
	\item  a non-empty interval is written as $[i,j]$ with~$i$, respectively~$j$, its 
               smallest, respectively largest, element; 
               (see \cite[Subsection 2.2]{HL25});
	\item  $\mathbb{F}_{2}=\{0,1\}$ is the field with $2$ elements;
	\item  $\mathbf{1}_A$ is the characteristic function of $A \subseteq \Lambda$, 
               i.e. the element 
               $\mathbf{1}_A = (\alpha_0, \alpha_1, \ldots, \alpha_{l-1} )\in 
               \mathbb{F}_2^\Lambda$ with $\alpha_j = 1$ if and only if  $j \in A$ 
               (see \cite[Subsection 2.2]{HL25});
	\item  $\omega_{\Lambda}: \mathbb{F}_2^{\Lambda} \rightarrow \{ -1, 1 \}^l$ 
               is the $\mathbb{F}_{2}$-isomorphism defined by 
               $$
               \phantom{\bullet\quad}\omega_{\Lambda}(\alpha_0, \ldots , \alpha_{l-1})_i = 
               \begin{cases} 
               +1, \text{\ if\ } \sum_{j = 0}^{i-1} \alpha_j = 0\\
               -1, \text{\ if\ } \sum_{j = 0}^{i-1} \alpha_j = 1
               \end{cases}\quad\quad\text{for each\ } 1 \leq i \leq l;
               $$
                (see \cite[Subsection 2.2]{HL25}).
\end{enumerate}
\smallskip

\noindent\textbf{Endo-permutation module associated to a cyclic block 
                 $\mathcal{B}$ with defect group $D$ of order $p^l$}:
\begin{enumerate}[label=$\bullet$, leftmargin=5mm]
	\item  $W(\mathcal{B})$ is the endo-permutation $kD$-module 
               associated to $\mathcal{B}$, uniquely determined, as an element 
               of the Dade group of $D$, by the $l$-tuple
               $(\alpha_0, \alpha_1, \ldots, \alpha_{l-1} )\in \mathbb{F}_2^{\Lambda}$,  
               also written 
               $W(\mathcal{B})=W_{D}(\alpha_0, \alpha_1, \ldots, \alpha_{l-1})$  
               (see \cite[Subsection 3.1]{HL24}); 
	\item  the label $(\alpha_0, \alpha_1, \ldots, \alpha_{l-1})$ above is 
               identified with a subset of $\Lambda$ via the $\mathbb{F}_{2}$-isomorphism  
               $\mathcal{P}(\Lambda)\rightarrow  \mathbb{F}_2^{\Lambda}, 
               A\mapsto \mathbf{1}_{A}$  (see \cite[Subsections 2.2 and 2.3]{HL25});
	\item  $W_D(A) := W_D(\mathbf{1}_A)$  for any subset $A\subseteq \Lambda$ 
               (see \cite[Definition~2.3.1]{HL25});
        \item  $W_D( \emptyset ) \cong k$, the trivial module  (particular case of the above);
	\item  $\omega_{W}:=\sigma_{\rho^{}_{W}}$ is the sign function associated 
               to  the $K$-character $\rho^{}_{W}$ afforded by the unique lift of 
               determinant one of $W:=W(\mathcal{B})$ to $\cO$  
               (see \cite[Subsection 3.1]{HL24}); 
	         
	\item  $\omega_{W}^{[l]}(t)$ with $t$ a generator of $D$ determines $W$ 
               up to isomorphism (see \cite[Lemma~2.3.2]{HL25});
	\item  $\omega_W^{[l]}( t )=\sigma_{\chi}^{[l]}( t )$  provided $t$ is 
               as above, $\langle t^{p^{l-1}} \rangle$  is in the center of the 
               group considered, and  $\chi\in \Irr(\mathcal{B})$ denotes the 
               unique non-exceptional character  (see \cite[Remark~2.3.3]{HL25});
	\item  $\omega_{\mathbf{H}}( s ) := 
               \varepsilon_{\mathbf{H}}\varepsilon_{C^{\circ}_{\mathbf{H}}(s)}$, 
               when it occurs, is a sign associated with the algebraic group 
               $\mathbf{H}$ under consideration  and any  semisimple element 
               $s\in H$ (see \cite[Definition 2.5.6]{HL25}). 
\end{enumerate}
\smallskip

\noindent\textbf{The groups considered (Section~\ref{sec:Intro}, 
                 Section~\ref{sec:Analysis} and Section~\ref{sec:Synthesis})}:
\begin{enumerate}[label=$\bullet$, leftmargin=5mm]
        \item  $\varepsilon\in\{\pm 1\}$; $\delta := 1$ if $\varepsilon = 1$, and  
               $\delta := 2$ if $\varepsilon = -1$;
	\item  $q$ is a power of a prime number $r$ such that $p \mid q-\varepsilon$;
	\item  $\mathbb{F}$ is an algebraic closure of the finite field with~$r$ elements;
	\item  $n$ is a positive integer satisfying: $n\geq 2$ in Section~\ref{sec:Intro} 
               and Section~\ref{sec:Analysis}, and $n\geq 1$ in Section~\ref{sec:Synthesis};
	\item  $\tilde{\bG} := \GL_n( \mathbb{F} )$ and 
               $\bG :=\{ g \in \tilde{\bG} \mid \det(g) = 1 \}$; 
	\item  $F := F_{\varepsilon}$ is a Steinberg morphism of~$\tilde{\bG}$ such that 
               {$\tilde{G} := \tilde{\bG}^F = \GL_n^\varepsilon( q )$};
	\item  $G := \bG^F = \SL_n^\varepsilon( q )$;
	\item  $Z := Z( G )$ and satisfies $|Z|=\mathrm{gcd}(q-\varepsilon,n)$;
        \item  $a, b, c$ are non-negative integers such that $p^a, p^b, p^c$
               are the highest powers of~$p$ dividing $q - \varepsilon$, $n$ and
               $\mathrm{gcd}(q-\varepsilon,n)$, respectively. Thus $a > 0$ and
               $c = \min \{ a, b \}$;
	\item  $\bV := \mathbb{F}^n$ is the natural vector space for~$\tilde{\bG}$, 
               and $V := \mathbb{F}_{q^\delta}^n \subseteq \bV$;
	\item  $\bG^* := \tilde{\bG}/Z( \tilde{\bG} ) = \PGL_n( \mathbb{F} )$ 
               (with the Steinberg morphism induced from the one on~$\tilde{\bG}$);
	\item  $G^* := \tilde{G}/Z( \tilde{G} ) = \PGL_n( q )$;
	\item  $Y\leq Z$ is a $p$-subgroup and  $\bar{G}:=G/Y$ is a central quotient 
               of~$G$.
\end{enumerate}
\smallskip

\noindent\textbf{The blocks and the associated local configuration (Section~\ref{sec:Analysis})}: 
\begin{enumerate}[label=$\bullet$, leftmargin=5mm]
        \item  $\bar{\bB}$ is a cyclic $p$-block of~$\bar{G}$ and $\bB$ is the unique 
               block of $G$ dominating $\bar{\bB}$;
        \item  $D$ is a defect group of $\bB$ such that $\bar{D} := D/Y$ is 
               a defect group of $\bar{\bB}$;
        \item  $t \in D$ is such that $\bar{t} := tY \in \bar{G}$ 
               generates~$\bar{D}$;
        \item  $h \in \{ 1, 2, 3 \}$ is the number of irreducible factors of the minimal 
               polynomial of~$t$ acting on~$V$; 
        \item  $c'$ is the non-negative integer such that $|\bar{D}| = |t|/p^{c'}$.
        \item  $C := C_G( D )$ and $\tilde{C} := C_{\tilde{G}}( D )$;
        \item  $\bc$ is a Brauer correspondent of~$\bB$ in~$C$ and $\tilde{\bc}$
               is a block of $\tilde{C}$ covering~$\bc$; see Figure~\ref{DiagramForSLn};
        \item  $\tilde{D}$ is a defect group of~$\tilde{\bc}$ with $D = C \cap \tilde{D}
               = G \cap \tilde{D}$.
\end{enumerate}

\section*{Acknowledgements}

The authors thank Olivier Dudas, Meinolf Geck, Radha Kessar, 
Burkhard K{\"u}ls\-ham\-mer,
Markus Linckelmann, Frank L{\"u}beck,
Klaus Lux, Gunter Malle and Jay Taylor for innumerable invaluable discussions
and elaborate explanations on various aspects of this work.

\end{document}